\newcommand{\etal}{{et al}. }
\newcommand{\ie}{{i}.{e}., }
\newcommand{\Ttran}{\mathsf{T}}
\newcommand{\de}{\,\mathrm{d}}
\newcommand{\supp}{\mathrm{supp}}
\newcommand{\Tr}{\mathrm{Tr}}
\newcommand{\HE}{\mathcal{H}}
\newcommand{\defi}{\equiv}
\newcommand{\normml}{\left\vert\kern-0.25ex\left\vert\kern-0.25ex\left\vert}
\newcommand{\normmr}{\right\vert\kern-0.25ex\right\vert\kern-0.25ex\right\vert}
\newcommand{\Mv}{\mathbf{v}^{h}}
\newcommand{\MA}{\mathbf{K}^{h}}
\newcommand{\MM}{\mathbf{M}^{h}}
\newcommand{\Mu}{\mathbf{u}^{h}}
\newcommand{\MS}{\mathbf{S}^{h}}
\newcommand{\MH}{\mathbf{H}^{h}}
\newcommand{\MI}{\mathbf{I}^{h}}
\newcommand{\fig}{pdf}
\newcommand{\figsizeD}{0.45\textwidth}
\providecommand{\abs}[1]{\lvert#1\rvert}
\providecommand{\norm}[1]{\lVert#1\rVert}
\providecommand{\dual}[1]{\langle#1\rangle}
\providecommand{\normm}[1]{\normml#1\normmr}
\providecommand{\bigabs}[1]{\bigl\lvert#1\bigr\rvert}
\providecommand{\Bigabs}[1]{\Bigl\lvert#1\Bigr\rvert}
\providecommand{\bignorm}[1]{\bigl\lVert#1\bigr\rVert}
\providecommand{\Bignorm}[1]{\Bigl\lVert#1\Bigr\rVert}
\providecommand{\bigdual}[1]{\bigl\langle#1\bigr\rangle}
\begin{document}
    \title{Convergence analysis of the Newton-Schur method for the symmetric elliptic eigenvalue problem \thanks{Version of \today.\funding{W.B. Chen is supported by the National Natural Science Foundation of China (NSFC) 12071090.}}}
    \author{
        Nian~Shao\thanks{School of Mathematical Sciences, Fudan University,  Shanghai, 200433, People's Republic of China ({nshao20@fudan.edu.cn}).}
        \and
        Wenbin~Chen\thanks{Corresponding author. School of Mathematical Sciences and Shanghai Key Laboratory for Contemporary Applied Mathematics, Fudan University,  Shanghai, 200433, People's Republic of China ({wbchen@fudan.edu.cn}).}
    }
    \headers{Convergence analysis of the Newton-Schur method}{Nian Shao and Wenbin Chen }
    \maketitle
    \begin{abstract}
        In this paper, we consider the Newton-Schur method in Hilbert space and obtain quadratic convergence. For the symmetric elliptic eigenvalue problem discretized by the standard finite element method and non-overlapping domain decomposition method, we use the Steklov-Poincar\'e operator to reduce the eigenvalue problem on the domain $\Omega$ into the nonlinear eigenvalue subproblem on $\Gamma$, which is the union of subdomain boundaries. We prove that the convergence rate for the Newton-Schur method is $\epsilon_{N}\leq CH^{2}(1+\ln(H/h))^{2}\epsilon^{2}$, where the constant $C$ is independent of the fine mesh size $h$ and coarse mesh size $H$, and $\epsilon_{N}$ and $\epsilon$ are errors after and before one iteration step respectively. Numerical experiments confirm our theoretical analysis.	
    \end{abstract}
    \begin{keywords}
        Eigenvalue problem, convergence analysis, Newton-Schur method, domain decomposition method
    \end{keywords}
    \begin{AMS}
        65N25, 65N30, 65N55
    \end{AMS}
    \section{Introduction}
    It is well-known that the smallest eigenvalue problem is very important in scientific and engineering computations. Suppose $V$ is a Hilbert space with inner product $(\cdot,\cdot)$, the eigenvalue problem can be defined as
    \begin{equation}
        a(v_{\lambda},v)=\lambda\, (v_{\lambda},v)\quad\forall\, v\in V,
        \label{eigeq}
    \end{equation}
    where $a(\cdot,\cdot)$ is a symmetric bilinear form on $V\times V$.

    There are many classical methods for computing the eigenvalue and its corresponding eigenvector in algebraic view \cite{Bai2000,Parlett1998,Saad2011,Wilkinson1965,Golub2013}. However, traditional methods  suffer from slow convergence for problems from fluid dynamics or electronic device simulation \cite{Saad2003}. Therefore, preconditioning techniques are often necessary for converging fast. One of the most famous preconditioned method for eigenvalue problem is the Locally Optimal Block Preconditioned Conjugate Gradient (LOBPCG) method proposed by Knyazev \etal \cite{Knyazev2001,Knyazev2007,Knyazev2003}.

    In PDE view, especially for symmetric elliptic eigenvalue problems, there are many effective methods. For the early important researches on computing the eigenpair, a multigrid method was proposed by Hackbusch in \cite{Hackbusch1979}, a mesh refinement strategy was introduced by McCormick in \cite{McCormick1981} and a multilevel inverse iteration procedure was analyzed by Bank in \cite{Bank1982}. As for the theoretical analysis, the standard Galerkin approximation scheme for computing the approximate eigenpair was analyzed by Babu\v ska and Osborn in \cite{Babuska1987,Babuska1989,Babuska1991}. When it comes to the domain decomposition method, some two-domains decomposition methods for computing the smallest eigenpair were proposed by Lui in \cite{Lui2000} and a Schwarz alternating method for many subdomains case was constructed by Maliassov in \cite{Maliassov1998}. Another effective eigenvalue solver is the two grid method proposed by Xu and Zhou in \cite{Xu2001,Xu2002}. There were some further study about it, such as \cite{Hu2011,Zhou2014,Yang2011}. Moreover, some methods based on correction were also proposed for eigenvalue problem, such as \cite{Lin2015,Xie2019}. Recently, a two-level overlapping hybrid domain decomposition method for solving the large scale elliptic eigenvalue problem by Jacobi-Davidson method was proposed by Wang and Xu in \cite{Wang2018,Wang2019}.

    One important theoretical problem for these methods is to find conditions such that the algorithm is optimal \cite{Toselli2005}, which means that there exists a constant $C$ independent of fine mesh size $h$ such that
    \begin{equation*}
        \epsilon_{N}\leq C\epsilon,
    \end{equation*}
    where $\epsilon_{N}$ and $\epsilon$ are errors after and before one iteration respectively. In early versions of two grid method, some conditions between $h$ and $H$ are needed to ensure the optimality, where $H$ is the mesh size of the coarse space. The first method proposed by Xu and Zhou in \cite{Xu2001,Xu2002} needs $\mathcal{O}(H^{2})=h$, another two level methods based on inverse iteration can be optimal under the condition $\mathcal{O}(H^{4})=h$, see \cite{Hu2011,Yang2011}. Recently, the method proposed by Wang and Xu in \cite{Wang2018,Wang2019} is optimal with no assumptions between $h$ and $H$. For Maxwell eigenvalue problem, similar results can be found in \cite{Liang2022}.

    One popular non-overlapping domain decomposition method for eigenvalue problem is the spectral Schur complements method proposed by Bekas and Saad in \cite{Bekas2005}. It can be regarded as a variation of Automated MultiLevel Substructuring (AMLS) method in \cite{Bennighof2004}, whose numerical implementation can be found in \cite{Gao2008}. Recently, the spectral Schur complement method was developed into the Newton-Schur method in \cite{Kalantzis2018,Kalantzis2020a,Kalantzis2020,Kalantzis2016} by Kalantzis, Li and Saad. All these researches focused on algorithm design and numerical implementation in algebraic view. As for the convergence rate, since the Newton-Schur method is essentially Newton's method, it could be expected to converge quadratically, at least if a sufficiently accurate initial approximation is provided \cite{Kalantzis2020}. But a rigorous theoretical analysis is hard.

    In this paper, we focus on the theoretical analysis of the convergence rate. The Newton-Schur method is studied in the abstract Hilbert space, and the quadratic convergence is obtained under some assumptions on the bilinear form $a(\cdot,\cdot)$ in \eqref{eigeq}. For symmetric elliptic eigenvalue problems discretized by the standard finite element method and non-overlapping domain decomposition method, we use the Steklov-Poincar\'e operator to reduce the eigenvalue problem in the domain $\Omega$ into the nonlinear eigenvalue subproblem on $\Gamma$, which is the union of subdomain boundaries. The assumptions on the bilinear form are verified and the convergence rate of the Newton-Schur method is
    \begin{equation}
        \label{introconfac}
        \epsilon_{N}\leq CH^{2}\bigl(1+\ln(H/h)\bigr)^{2}\epsilon^{2},
    \end{equation}
    where the constant $C$ is independent of $h$ and $H$. The theoretical results are confirmed by our numerical examples for both two-dimensional and three-dimensional elliptic problems. To the best of our knowledge, similar results are not found in the references.

    The outline of this paper is organized as follows: we extend the Newton-Schur method into Hilbert space and provide some results about convergence in section~2. In section~3, we analyze an important problem, the symmetric elliptic eigenvalue problem discretized by the standard finite element method and non-overlapping domain decomposition method, we prove the rate of convergence in \eqref{introconfac}. Finally, numerical experiments are given in section~4. In the rest of this paper, we use notations in \cite{Xu1989}. Let $A \lesssim B$ represent the statement that $A\leq cB$, where the constant $c$ is positive and independent of $h$, $H$ and the variables in $A$ and $B$. The notation $A\gtrsim B$ means $B\lesssim A$ and $A\approx B$ means that $A\lesssim B$ and $B\lesssim A$.

    \section{Newton-Schur method in Hilbert space}
    \subsection{Setting the problem}
    \label{defspace}
    Let $W$ be a Hilbert space with inner product $(\cdot,\cdot)$ and norm $\norm{\cdot}$, and $V\subset W$ be a closed subspace. Suppose $a(\cdot,\cdot)$ is a symmetric bilinear form on $V\times V$. Let $V_{I}$ be a closed subspace of $V$ and suppose $a(\cdot,\cdot)$ is coercive on $V_{I}$, \ie there exists a constant $\alpha>0$ such that
    \begin{equation}
        \label{coercive}
        a(v,v)\geq \alpha\,\norm{v}^{2}
    \end{equation}
    for all $v\in V_{I}$ and with equality for some $v_{I}\in V_{I}$. Since $\alpha>0$, $a(\cdot,\cdot)$ can be regarded as an inner product on $V_{I}$. For a scalar $\rho<\alpha$, let
    \begin{equation}
        \label{defvarrho}
        a_{\rho}(\cdot,\cdot)\defi a(\cdot,\cdot)-\rho\,(\cdot,\cdot),
    \end{equation}
    then $a_{\rho}(\cdot,\cdot)$ is positive definite on $V_{I}$, and the $a_{\rho}$\nobreakdash-orthogonal space of $V_{I}$ is defined as
    \begin{equation}
        \label{defVB}
        V_{B,\rho}\defi\{v\in V\mid a_{\rho}(v,v_{I})=0,\,\forall\, v_{I}\in V_{I}\}.
    \end{equation}
    By using the Lax-Milgram's lemma, we can get the following decomposition for $V$.
    \begin{proposition}
        \label{decompositionV}
        For any $\rho<\alpha$ and all $v\in V$, there exists a unique decomposition
        \begin{equation*}
            v = v_{I}+v_{B},
        \end{equation*}
        where $v_{I}\in V_{I}$ and $v_{B}\in V_{B,\rho}$.
    \end{proposition}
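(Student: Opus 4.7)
The plan is to reformulate the decomposition as a variational problem on $V_{I}$ and then apply the Lax--Milgram lemma, as the proposition's statement hints. Given $v\in V$, any candidate decomposition $v=v_{I}+v_{B}$ with $v_{I}\in V_{I}$ and $v_{B}\in V_{B,\rho}$ is forced by definition \eqref{defVB} to satisfy
\begin{equation*}
    a_{\rho}(v_{I},w_{I})=a_{\rho}(v,w_{I})\qquad\forall\, w_{I}\in V_{I}.
\end{equation*}
So the first step is to exhibit $v_{I}\in V_{I}$ solving this problem, then \emph{define} $v_{B}\defi v-v_{I}$ and check $v_{B}\in V_{B,\rho}$ (this last step is immediate from the equation just written, rearranged).

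Next I would verify the Lax--Milgram hypotheses on the Hilbert space $V_{I}$ (which inherits the $W$-inner product and is complete since it is closed). Coercivity is the point of introducing $a_{\rho}$: for $w_{I}\in V_{I}$, the assumption \eqref{coercive} together with $\rho<\alpha$ yields
\begin{equation*}
    a_{\rho}(w_{I},w_{I})=a(w_{I},w_{I})-\rho\,\norm{w_{I}}^{2}\geq(\alpha-\rho)\,\norm{w_{I}}^{2},
\end{equation*}
so $a_{\rho}$ is bounded below by a positive constant times the squared norm on $V_{I}$. Continuity of $a_{\rho}(\cdot,\cdot)$ on $V_{I}\times V_{I}$ and of the linear functional $w_{I}\mapsto a_{\rho}(v,w_{I})$ should be available from the standing (implicit) continuity hypothesis on $a(\cdot,\cdot)$; Lax--Milgram then produces a unique $v_{I}\in V_{I}$ solving the variational problem, which gives existence of the decomposition.

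For uniqueness of the pair $(v_{I},v_{B})$, I would show $V_{I}\cap V_{B,\rho}=\{0\}$: if $w$ lies in both, then \eqref{defVB} with test function $w$ itself gives $a_{\rho}(w,w)=0$, whence $w=0$ by the coercivity bound above. Consequently, if $v=v_{I}+v_{B}=v_{I}'+v_{B}'$ were two decompositions, then $v_{I}-v_{I}'=v_{B}'-v_{B}\in V_{I}\cap V_{B,\rho}=\{0\}$, finishing the argument.

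I expect no substantial obstacle here; the only mild subtlety is that the excerpt does not explicitly postulate continuity of $a(\cdot,\cdot)$, so the actual difficulty is bookkeeping: making sure the Lax--Milgram continuity hypotheses are either imposed or harmless in the intended applications (where $a$ is the standard elliptic bilinear form and continuity is automatic). Once that is acknowledged, the argument is the textbook Hilbert-space orthogonal decomposition adapted to the $a_{\rho}$-inner product.
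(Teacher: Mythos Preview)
Your proposal is correct and matches the paper's approach exactly: the paper simply states that the decomposition follows ``by using the Lax--Milgram's lemma'' and gives no further details, so your write-up is precisely the argument the authors have in mind. Your caveat about the implicit continuity assumption on $a(\cdot,\cdot)$ is apt and harmless in the intended applications.
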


    Suppose $W_{\Gamma}$ is another Hilbert space with one inner product $(\cdot,\cdot)_{\Gamma}$ and the corresponding norm $\norm{\cdot}_{\Gamma}$, and $V_{\Gamma}\subset W_{\Gamma}$ is a closed subspace. Let $\HE_{\rho}$ be a bijective bounded extension from $V_{\Gamma}$ to $V_{B,\rho}$ satisfying
    \begin{equation}
        \label{bound}
        \normm{\HE_{\rho}}\leq c_{\HE}
    \end{equation}
    for all $\rho<\alpha$, where $c_{\HE}$ is a constant independent of $\rho$ and
    \begin{equation*}
        \normm{\HE_{\rho}}\defi \sup_{0\neq u\in V_{\Gamma}}\frac{\norm{\HE_{\rho}u}}{\norm{u}_{\Gamma}}.
    \end{equation*}
    Moreover, for any $\rho_{1},\,\rho_{2}<\alpha$, the extensions $\HE_{\rho_{1}}$ and $\HE_{\rho_{2}}$ satisfy
    \begin{equation}
        \label{ext}
        \HE_{\rho_{1}}\!u-\HE_{\rho_{2}}\!u\in V_{I}
    \end{equation}
    for all $u\in V_{\Gamma}$. The following lemma describes the continuity of $\HE_{\rho}$ respect to $\rho$.
    \begin{lemma}
        \label{continuity}
        Let $\rho_{1},\,\rho_{2}<\alpha$ and $\delta\HE=\HE_{\rho_{1}}\!-\HE_{\rho_{2}}$, then
        \begin{equation*}
            \norm{\delta \HE u}\leq \frac{\abs{\rho_{1}-\rho_{2}}}{\alpha-\rho_{1}}\,\norm{\HE_{\rho_{2}}\!u}\leq\frac{c_{\HE}\abs{\rho_{1}-\rho_{2}}}{\alpha-\rho_{1}}\,\norm{u}_{\Gamma}
        \end{equation*}
        for all $u\in V_{\Gamma}$.
    \end{lemma}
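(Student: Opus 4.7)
The plan is to exploit the two structural facts available: that $\delta\HE\, u$ lies in $V_I$ by property \eqref{ext}, and that both $\HE_{\rho_1}u$ and $\HE_{\rho_2}u$ satisfy orthogonality relations coming from their definitions in $V_{B,\rho_1}$ and $V_{B,\rho_2}$ respectively. The second inequality in the lemma is then immediate from the extension bound \eqref{bound}, so essentially only the first inequality needs work.

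First I would write down the two orthogonality relations
\begin{equation*}
    a(\HE_{\rho_1}u, v_I) - \rho_1(\HE_{\rho_1}u, v_I) = 0, \qquad a(\HE_{\rho_2}u, v_I) - \rho_2(\HE_{\rho_2}u, v_I) = 0
\end{equation*}
for all $v_I\in V_I$, which come directly from the definition \eqref{defVB}. Subtracting and rearranging, the cross term $\rho_1(\HE_{\rho_2}u, v_I) - \rho_2(\HE_{\rho_2}u,v_I)$ can be split off so that the remaining identity reads
\begin{equation*}
    a_{\rho_1}(\delta\HE\, u, v_I) = (\rho_1-\rho_2)\,(\HE_{\rho_2}u, v_I) \quad \forall\, v_I\in V_I.
\end{equation*}

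The key step, and the one that makes the whole argument work, is that by \eqref{ext} the element $\delta\HE\, u$ is itself in $V_I$, so it is a legal test function. Substituting $v_I = \delta\HE\, u$ and applying Cauchy--Schwarz on the right gives
\begin{equation*}
    a_{\rho_1}(\delta\HE\, u, \delta\HE\, u) \leq \abs{\rho_1-\rho_2}\,\norm{\HE_{\rho_2}u}\,\norm{\delta\HE\, u}.
\end{equation*}
On the left, the coercivity hypothesis \eqref{coercive} yields $a_{\rho_1}(v,v)\geq (\alpha-\rho_1)\norm{v}^2$ for all $v\in V_I$, and since $\alpha-\rho_1>0$ by assumption, dividing through by $(\alpha-\rho_1)\norm{\delta\HE\, u}$ produces the first inequality. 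The second inequality then follows from \eqref{bound}.

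I do not anticipate any real obstacle here; the only subtlety is to notice early on that \eqref{ext} is exactly what licenses testing against $\delta\HE\, u$, which converts a bound on a bilinear form into a norm bound via coercivity. The denominator $\alpha-\rho_1$ (rather than a symmetric expression in $\rho_1$ and $\rho_2$) appears precisely because the orthogonality used is that of $V_{B,\rho_1}$; had one symmetrized the roles of $\rho_1$ and $\rho_2$ one would instead obtain the analogous bound with $\alpha - \rho_2$.
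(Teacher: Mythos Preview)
Your proof is correct and follows essentially the same route as the paper: both derive the identity $a_{\rho_1}(\delta\HE\,u,\delta\HE\,u)=(\rho_1-\rho_2)(\delta\HE\,u,\HE_{\rho_2}u)$ from the two orthogonality relations, invoke coercivity of $a_{\rho_1}$ on $V_I$, and finish with Cauchy--Schwarz and the bound \eqref{bound}. The only cosmetic difference is that you first obtain the identity $a_{\rho_1}(\delta\HE\,u,v_I)=(\rho_1-\rho_2)(\HE_{\rho_2}u,v_I)$ for general $v_I\in V_I$ and then specialize, whereas the paper computes $a_{\rho_1}(\delta\HE\,u,\delta\HE\,u)$ directly.
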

    \begin{proof}
        According to $\delta\HE u=\HE_{\rho_{1}}\!u-\HE_{\rho_{2}}\!u\in V_{I}$ and $V_{I}$ is $a_{\rho_{1}}\!$\nobreakdash-orthogonal to $V_{B,\rho_{1}}$,
        \begin{equation*}
            a_{\rho_{1}}\!(\delta\HE u,\delta\HE u)=a_{\rho_{1}}\!(\delta\HE u,\HE_{\rho_{1}}\!u-\HE_{\rho_{2}}\!u)=-a_{\rho_{1}}\!(\delta\HE u,\HE_{\rho_{2}}\!u).
        \end{equation*}
        Due to $V_{I}$ is $a_{\rho_{2}}\!$\nobreakdash-orthogonal to $V_{B,\rho_{2}}$ and the linearity of $a_{\rho}$ respect to $\rho$,
        \begin{equation*}
            a_{\rho_{1}}\!(\delta\HE u,\HE_{\rho_{2}}\!u)=a_{\rho_{2}}\!(\delta\HE u,\HE_{\rho_{2}}\!u)+(\rho_{2}-\rho_{1})\,(\delta\HE u,\HE_{\rho_{2}}\!u)=(\rho_{2}-\rho_{1})\,(\delta\HE u,\HE_{\rho_{2}}\!u).
        \end{equation*}
        Therefore, by combining these two equations above,
        \begin{equation}
            \label{delta-eps}
            a_{\rho_{1}}\!(\delta\HE u,\delta\HE u)=(\rho_{1}-\rho_{2})\,(\delta\HE u,\HE_{\rho_{2}}\!u).
        \end{equation}
        Since $a_{\rho_{1}}\!(\cdot,\cdot)$ is coercive on $V_{I}$ and $\delta\HE u\in V_{I}$,
        \begin{equation*}
            (\alpha-\rho_{1})\,\norm{\delta\HE u}^{2}\leq (\rho_{1}-\rho_{2})\,(\delta\HE u,\HE_{\rho_{2}}\!u)\leq \abs{\rho_{1}-\rho_{2}}\,\norm{\delta\HE u}\,\norm{\HE_{\rho_{2}}\!u}.
        \end{equation*}
        By eliminating $\norm{\delta\HE u}$ on both sides of the equation above and using the bound for $\normm{\HE_{\rho_{2}}}$,
        \begin{equation*}
            \norm{\delta\HE u}\leq\frac{\abs{\rho_{1}-\rho_{2}}}{\alpha-\rho_{1}}\,\norm{\HE_{\rho_{2}}\!u}\leq \frac{c_{\HE}\,\abs{\rho_{1}-\rho_{2}}}{\alpha-\rho_{1}}\,\norm{u}_{\Gamma}.
        \end{equation*}
        This inequality means $\HE_{\rho_{1}}\!u-\HE_{\rho_{2}}\!u$ goes to zero when $\abs{\rho_{1}-\rho_{2}}\to0$, which leads to the continuity.
    \end{proof}

    The Steklov-Poincar\'e operator $S_{\rho}\colon V_{\Gamma}\mapsto (V_{\Gamma})^{\prime}$ can be defined as
    \begin{equation}
        \label{defsp}
        \dual{S_{\rho}u_{1},u_{2}}\defi a_{\rho}(\HE_{\rho}u_{1},\HE_{\rho}u_{2})
    \end{equation}
    for all $u_{1},\,u_{2}\in V_{\Gamma}$, where $(V_{\Gamma})^{\prime}$ is the dual space of $V_{\Gamma}$ and the bilinear form $\dual{\cdot,\cdot}$ is the duality pairing.
    \subsection{The smallest eigenvalue problem and the Newton-Schur method}
    We are interested in the smallest eigenvalue problem of $a(\cdot,\cdot)$ in $V$, which is to find the smallest $\lambda$ and $\norm{v_{\lambda}}=1$ such that
    \begin{equation}
        \label{eigvar}
        a(v_{\lambda},v)=\lambda\,(v_{\lambda},v)
    \end{equation}
    for all $v\in V$. In the rest of this paper, we assume that the smallest eigenvalue of $a(\cdot,\cdot)$ is simple and $-\infty<\lambda<\alpha$. Actually, due to the variation principle of eigenvalues (see Equation~2.1 in Section~3 of \cite{Weinberger1974}),
    \begin{equation*}
        \lambda = \min_{0\neq v\in V}\frac{a(v,v)}{(v,v)}\leq \min_{0\neq v\in V_{I}}\frac{a(v,v)}{(v,v)}=\alpha,
    \end{equation*}
    with equality only when $v_{\lambda}\in V_{I}$. So $\lambda<\alpha$ means that the eigenvector $v_{\lambda}$ corresponding to the smallest eigenvalue $\lambda$ is not in $V_{I}$. If there exists a scalar $\lambda$ and $u_{\lambda}\in V_{\Gamma}$, $u_{\lambda}\neq0$ such that
    \begin{equation}
        \label{eigsp}
        \dual{S_{\lambda}u_{\lambda},u}=0
    \end{equation}
    for all $u\in V_{\Gamma}$, by using the definition of Steklov-Poincar\'e operator in \cref{defsp},
    \begin{equation*}
        a_{\lambda}(\HE_{\lambda}u_{\lambda},\HE_{\lambda}u)=\dual{S_{\lambda}u_{\lambda},u}=0
    \end{equation*}
    for all $u\in V_{\Gamma}$. On the one hand, as $\HE_{\lambda}$ is injective on $V_{B,\rho}$, $u_{\lambda}\neq0$ leads to $\HE_{\lambda}u_{\lambda}\neq0$. On the other hand, since $\HE_{\lambda}$ is surjective to $V_{B,\lambda}$ and $V_{B,\lambda}$ is the $a_{\lambda}$\nobreakdash-orthogonal complement of $V_{I}$, by \cref{decompositionV},
    \begin{equation}
        \label{eigtmp}
        a_{\lambda}(\HE_{\lambda}u_{\lambda},v)=0
    \end{equation}
    for all $v\in V$, \ie $\lambda$ is the eigenvalue of $a(\cdot,\cdot)$ and $\HE_{\lambda}u_{\lambda}$ is parallel to the corresponding eigenvector. It is easy to verify that if $\lambda$ is the smallest root of \cref{eigsp}, then $\lambda$ is the smallest eigenvalue of $a(\cdot,\cdot)$. So we can use Steklov-Poincar\'e operator to reduce the smallest eigenvalue problem of $a(\cdot,\cdot)$ into the smallest root finding problem of $S_{\rho}$ respect to $\rho$. The Newton-Schur method in \cite{Bekas2005,Kalantzis2018,Kalantzis2020a,Kalantzis2020,Kalantzis2016} is in this framework of algebraic view. In this paper, we take a look at the method in Hilbert space and extend the Newton-Schur method to infinite dimension space. First, let us consider the eigenvalue problem of $S_{\rho}$:
    \begin{equation}
        \label{schureig}
        \dual{S_{\rho}u_{\rho},u}=\theta_{\rho}\,(u_{\rho},u)_{\Gamma}\quad\forall\, u\in V_{\Gamma},
    \end{equation}
    where $(\theta_{\rho},u_{\rho})$ is the smallest eigenpair of $S_{\rho}$ and $\norm{u_{\rho}}_{\Gamma}=1$. The root of \cref{eigsp} is found if we can find a $\lambda$ such that $\theta_{\lambda}=0$. Therefore, the root-finding problem \cref{eigsp} can be transformed into the nonlinear eigenvalue problem \cref{schureig}. In order to apply the Newton-Schur method for \cref{schureig} to find $\rho$ such that $\theta_{\rho}=0$, the first order Fr\'echet derivative $S_{\rho}^{\prime}\defi S^{\prime}(\rho)$ with respect to $\rho$ needs to be calculated first. By $\HE_{\rho}^{\prime}u\in V_{I}$ for all $u\in V_{\Gamma}$, we have the following proposition.
    \begin{proposition}
        \label{defSprime}
        The linear operator $S_{\rho}^{\prime}\colon V_{\Gamma}\to(V_{\Gamma})^{\prime}$ can be expressed as
        \begin{equation*}
            \dual{S_{\rho}^{\prime}u_{1},u_{2}}=-(\HE_{\rho}u_{1},\HE_{\rho}u_{2})\quad\forall\, u_{1},\,u_{2}\in V_{\Gamma}.
        \end{equation*}
    \end{proposition}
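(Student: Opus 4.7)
The plan is to differentiate the bilinear form $\dual{S_{\rho}u_{1},u_{2}}=a_{\rho}(\HE_{\rho}u_{1},\HE_{\rho}u_{2})$ with respect to $\rho$ by a formal product rule and then use the $a_{\rho}$\nobreakdash-orthogonality between $V_{I}$ and $V_{B,\rho}$, together with the fact that $\HE_{\rho}^{\prime}u\in V_{I}$, to kill every cross term.

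First I would verify that $\HE_{\rho}$ is Fr\'echet differentiable in $\rho$ with derivative lying in $V_{I}$. The hypothesis \cref{ext} gives $\HE_{\rho_{1}}\!u-\HE_{\rho_{2}}\!u\in V_{I}$, and \cref{continuity} together with \cref{bound} yields the Lipschitz estimate $\norm{\HE_{\rho_{1}}\!u-\HE_{\rho_{2}}\!u}\lesssim\abs{\rho_{1}-\rho_{2}}\,\norm{u}_{\Gamma}$. Since $V_{I}$ is closed in $V$, the limiting derivative $\HE_{\rho}^{\prime}u$ also belongs to $V_{I}$.

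Next, splitting $a_{\rho}(\cdot,\cdot)=a(\cdot,\cdot)-\rho\,(\cdot,\cdot)$ and differentiating termwise gives
\begin{equation*}
    \dual{S_{\rho}^{\prime}u_{1},u_{2}}=a_{\rho}(\HE_{\rho}^{\prime}u_{1},\HE_{\rho}u_{2})+a_{\rho}(\HE_{\rho}u_{1},\HE_{\rho}^{\prime}u_{2})-(\HE_{\rho}u_{1},\HE_{\rho}u_{2}).
\end{equation*}
Because $\HE_{\rho}u_{i}\in V_{B,\rho}$, the defining property \cref{defVB}, together with the symmetry of $a_{\rho}$, forces $a_{\rho}(v,\HE_{\rho}u_{i})=a_{\rho}(\HE_{\rho}u_{i},v)=0$ for every $v\in V_{I}$. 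Applying this with $v=\HE_{\rho}^{\prime}u_{j}\in V_{I}$ collapses the first two terms to zero and leaves exactly the asserted formula.

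The main obstacle is justifying the termwise differentiation rigorously, since $\HE_{\rho}$ depends on $\rho$ only implicitly through \cref{defVB}. If the product-rule step feels unsatisfying, I would instead carry out the explicit difference-quotient computation in the spirit of \cref{continuity}: writing $\HE_{\rho_{1}}\!u_{i}=\HE_{\rho_{2}}\!u_{i}+\delta\HE u_{i}$ with $\delta\HE u_{i}\in V_{I}$, and repeatedly exploiting the $a_{\rho_{j}}\!$\nobreakdash-orthogonality of $V_{I}$ and $V_{B,\rho_{j}}$ for $j=1,2$ along with the identity $a_{\rho_{1}}\!-a_{\rho_{2}}=(\rho_{2}-\rho_{1})\,(\cdot,\cdot)$, one telescopes the difference into
\begin{equation*}
    \dual{S_{\rho_{1}}\!u_{1},u_{2}}-\dual{S_{\rho_{2}}\!u_{1},u_{2}}=-(\rho_{1}-\rho_{2})\,(\HE_{\rho_{1}}\!u_{1},\HE_{\rho_{2}}\!u_{2}).
\end{equation*}
Dividing by $\rho_{1}-\rho_{2}$ and sending $\rho_{1}\to\rho_{2}$, where the continuity of $\HE_{\rho}$ from \cref{continuity} and the uniform bound \cref{bound} guarantee the inner product passes to the limit, produces exactly $-(\HE_{\rho}u_{1},\HE_{\rho}u_{2})$.
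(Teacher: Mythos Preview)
Your proposal is correct and follows the same idea the paper uses: the paper states the proposition immediately after the one-line remark ``By $\HE_{\rho}^{\prime}u\in V_{I}$ for all $u\in V_{\Gamma}$, we have the following proposition,'' which is exactly your product-rule-plus-orthogonality argument. Your additional difference-quotient computation, yielding $\dual{S_{\rho_{1}}u_{1},u_{2}}-\dual{S_{\rho_{2}}u_{1},u_{2}}=-(\rho_{1}-\rho_{2})\,(\HE_{\rho_{1}}u_{1},\HE_{\rho_{2}}u_{2})$, is a welcome rigorous justification that the paper omits, and it mirrors the manipulations the paper later carries out for $S_{\rho}^{\prime\prime}$ in \cref{lemdiff2}.
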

    \begin{lemma}
        \label{lemdiff1}
        Assume $(\theta_{\rho},u_{\rho})$ is the smallest eigenpair of $S_{\rho}$ as \cref{schureig}. If $\rho<\alpha$, then the first order derivative $\theta_{\rho}^{\prime}\defi \theta^{\prime}(\rho)$ satisfies
        \begin{equation*}
            \theta_{\rho}^{\prime}=\frac{\dual{S^{\prime}_{\rho}u_{\rho},u_{\rho}}}{(u_{\rho},u_{\rho})_{\Gamma}}=-\frac{(\HE_{\rho}u_{\rho},\HE_{\rho}u_{\rho})}{(u_{\rho},u_{\rho})_{\Gamma}}<0.
        \end{equation*}
    \end{lemma}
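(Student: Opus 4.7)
The plan is to carry out the standard Hellmann--Feynman / Rayleigh-quotient derivative argument adapted to the parametrized eigenproblem \cref{schureig}, and then plug in \cref{defSprime} for the second equality and the injectivity of $\HE_{\rho}$ for the strict negativity.

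First, I would differentiate the defining identity $\dual{S_{\rho}u_{\rho},u}=\theta_{\rho}(u_{\rho},u)_{\Gamma}$ (valid for every $u\in V_{\Gamma}$) with respect to $\rho$, treating $\theta_{\rho}$ and $u_{\rho}$ as smooth functions of $\rho$ in a neighborhood of the point under consideration. This produces
\begin{equation*}
\dual{S_{\rho}'u_{\rho},u}+\dual{S_{\rho}u_{\rho}',u}=\theta_{\rho}'(u_{\rho},u)_{\Gamma}+\theta_{\rho}(u_{\rho}',u)_{\Gamma}.
\end{equation*}
Choosing the test function $u=u_{\rho}$ is the crucial step. Symmetry of $a_{\rho}$ together with \cref{defsp} shows that $S_{\rho}$ is self-adjoint, so $\dual{S_{\rho}u_{\rho}',u_{\rho}}=\dual{S_{\rho}u_{\rho},u_{\rho}'}=\theta_{\rho}(u_{\rho},u_{\rho}')_{\Gamma}$, which exactly cancels the $\theta_{\rho}(u_{\rho}',u_{\rho})_{\Gamma}$ term on the right. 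What remains is $\dual{S_{\rho}'u_{\rho},u_{\rho}}=\theta_{\rho}'(u_{\rho},u_{\rho})_{\Gamma}$, giving the first equality in the statement.

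Next, I substitute the explicit formula for $S_{\rho}'$ from \cref{defSprime} to obtain the second equality
\begin{equation*}
\theta_{\rho}'=-\frac{(\HE_{\rho}u_{\rho},\HE_{\rho}u_{\rho})}{(u_{\rho},u_{\rho})_{\Gamma}}.
\end{equation*}
For the strict sign, I note that $u_{\rho}\neq 0$ by hypothesis (it is a normalized eigenvector) and that $\HE_{\rho}$ is bijective from $V_{\Gamma}$ onto $V_{B,\rho}$, hence in particular injective; therefore $\HE_{\rho}u_{\rho}\neq 0$ and the numerator is strictly positive, yielding $\theta_{\rho}'<0$.

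The main obstacle I anticipate is the technical justification that $\theta_{\rho}$ and $u_{\rho}$ are indeed differentiable in $\rho$, since the entire Hellmann--Feynman calculation presupposes this. The argument relies on three ingredients already in place: the simplicity assumption on the smallest eigenvalue of $a(\cdot,\cdot)$, which transfers to simplicity of the smallest eigenvalue of $S_{\rho}$ near the point $\rho=\lambda$; the continuity of $\HE_{\rho}$ with respect to $\rho$ established in \cref{continuity}, which yields continuity (and in fact real-analyticity, via the expansion $S_{\rho}=S_{\rho_{0}}+(\rho-\rho_{0})\,\partial_{\rho}S+\cdots$) of $S_{\rho}$ as an operator-valued function; and classical Kato-style analytic perturbation theory for isolated simple eigenvalues of a self-adjoint family, which gives smooth dependence of the spectral projection and of the eigenpair on the parameter. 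Once differentiability is granted, the computation above is entirely routine.
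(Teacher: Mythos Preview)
Your proposal is correct and follows essentially the same route as the paper: differentiate \cref{schureig}, set $u=u_{\rho}$, use the symmetry of $S_{\rho}$ to cancel the $u_{\rho}'$ terms, and then invoke \cref{defSprime}. Your additional remarks on the injectivity of $\HE_{\rho}$ for the strict sign and on the differentiability of the eigenpair via Kato-type perturbation theory are sound elaborations that the paper leaves implicit.
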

    \begin{proof}
    By taking the derivative of \cref{schureig} first:
    \begin{equation}
        \label{diff1}
        \dual{S^{\prime}_{\rho}u_{\rho},u}+\dual{S_{\rho}u^{\prime}_{\rho},u}=\theta_{\rho}^{\prime}\,(u_{\rho},u)_{\Gamma}+\theta_{\rho}\,(u_{\rho}^{\prime},u)_{\Gamma}.
    \end{equation}
    Let $u=u_{\rho}$, since $(\theta_{\rho},u_{\rho})$ is the eigenpair of $S_{\rho}$ and $S_{\rho}$ is symmetric,
    \begin{equation*}
        \dual{S_{\rho}u_{\rho}^{\prime},u_{\rho}}=\dual{S_{\rho}u_{\rho},u_{\rho}^{\prime}}=\theta_{\rho}\,(u_{\rho},u_{\rho}^{\prime})_{\Gamma}=\theta_{\rho}\,(u_{\rho}^{\prime},u_{\rho})_{\Gamma}.
    \end{equation*}
    The lemma is proved by combining these two equations above with \cref{defSprime}.
    \end{proof}

    The Newton iteration $\rho_{N}=\rho-\theta_{\rho}/\theta_{\rho}^{\prime}$ becomes
    \begin{equation*}
        \rho_{N}=\rho-\frac{\dual{S_{\rho}u_{\rho},u_{\rho}}}{\dual{S^{\prime}_{\rho}u_{\rho},u_{\rho}}}
    \end{equation*}
    by \cref{lemdiff1,schureig}. Furthermore,
    let us take $v_{\rho}=\HE_{\rho}u_{\rho}$, since
    \begin{equation*}
        \dual{S_{\rho}u_{\rho},u_{\rho}}=a_{\rho}(\HE_{\rho}u_{\rho},\HE_{\rho}u_{\rho})=a(v_{\rho},v_{\rho})-\rho\,(v_{\rho},v_{\rho}),
    \end{equation*}
    and $\dual{S_{\rho}^{\prime}u_{\rho},u_{\rho}}=-(v_{\rho},v_{\rho})$,  the Newton iteration becomes
    \begin{equation}
        \label{NTstep3}
        \rho_{N}=\frac{a(v_{\rho},v_{\rho})}{(v_{\rho},v_{\rho})}.
    \end{equation}
    Based on the iteration \cref{NTstep3}, the Newton-Schur method in Hilbert space can be sketched as Algorithm~\ref{algo}.\\
    \begin{algorithm2e}[H]
        \label{algo}
        \caption{Newton-Schur method in Hilbert space}
        \KwIn{initial point $\rho$.}
        \KwOut{The approximation for the smallest eigenvalue $\rho$.}
        \Repeat{Convergence}{
            Solve the smallest eigenvalue problem \cref{schureig} in $V_{\Gamma}$ to get $u_{\rho}$\;
            Extend $u_{\rho}$ to $V$ by $v_{\rho} = \HE_{\rho}u_{\rho}$\;
            Update $\rho$ with \cref{NTstep3}:
            \begin{equation*}
                \rho = \frac{a(v_{\rho},v_{\rho})}{(v_{\rho},v_{\rho})};
            \end{equation*}
        }
    \end{algorithm2e}
    Since each iteration point $\rho$ is a Rayleigh quotient of $a(\cdot,\cdot)$ and $\lambda$ is the smallest eigenvalue of $a(\cdot,\cdot)$, $\rho\geq\lambda$ always holds during the algorithm. Let $\epsilon$ and $\epsilon_{N}$ be errors before and after one step of Newton iteration respectively:
    \begin{equation}
        \label{deferr}
        \epsilon = \rho-\lambda\quad\text{and}\quad
        \epsilon_{N} = \rho_{N}-\lambda.
    \end{equation}
    The following proposition can guarantee the convergence of Algorithm~\ref{algo}.
    \begin{proposition}
        \label{convergence}
        Let $\rho_{0}\in(\lambda,\alpha)$ be the initial point of Algorithm~\ref{algo}, where $\alpha$ is the constant for coercivity and $\lambda$ is the smallest eigenvalue of $a(\cdot,\cdot)$. The Newton-Schur method is convergent and at each iteration
        \begin{equation*}
            0\leq\epsilon_{N}<\epsilon.
        \end{equation*}
    \end{proposition}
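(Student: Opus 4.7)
The plan is to verify the two-sided estimate $\lambda \le \rho_N < \rho$ at each step and then invoke monotone convergence for the sequence produced by Algorithm~\ref{algo}. The first inequality is essentially a Rayleigh quotient observation and the second is a sign analysis of the Newton correction; the main obstacle is getting the sign of $\theta_\rho$ right when $\rho>\lambda$.

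For the lower bound $\rho_N\ge\lambda$, I would read off the Rayleigh quotient form \cref{NTstep3}: $\rho_N=a(v_\rho,v_\rho)/(v_\rho,v_\rho)$ with $v_\rho=\HE_\rho u_\rho\in V$. Since $u_\rho\ne0$ and $\HE_\rho$ is injective, $v_\rho\ne 0$, so the variational characterization of the smallest eigenvalue $\lambda$ of $a(\cdot,\cdot)$ on $V$ gives $\rho_N\ge\lambda$ at once, i.e.\ $\epsilon_N\ge 0$.

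For the strict decrease $\rho_N<\rho$, I would work with the formula $\rho_N=\rho-\theta_\rho/\theta_\rho'$ and show that both $\theta_\rho$ and $\theta_\rho'$ are strictly negative when $\rho\in(\lambda,\alpha)$. Lemma~\ref{lemdiff1} already delivers $\theta_\rho'<0$ on $(-\infty,\alpha)$. For the sign of $\theta_\rho$, I would invoke strict monotonicity of $\rho\mapsto\theta_\rho$: the discussion around \cref{eigsp}--\cref{eigtmp} identifies $\lambda$ as a root of $\theta_\rho=0$, so by the mean value theorem applied to the $C^1$ function $\rho\mapsto\theta_\rho$ on $(-\infty,\alpha)$, one has $\theta_\rho=\theta_\rho-\theta_\lambda=(\rho-\lambda)\theta_\xi'<0$ for some $\xi\in(\lambda,\rho)$. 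Hence $-\theta_\rho/\theta_\rho'<0$, yielding $\rho_N<\rho$ and therefore $\epsilon_N<\epsilon$.

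Combining the two inequalities, $\lambda\le\rho_N<\rho<\alpha$, so the next iterate again lies in $[\lambda,\alpha)$ and the algorithm can be continued indefinitely. The resulting sequence $\{\rho_k\}$ is monotone decreasing and bounded below by $\lambda$, hence convergent. The only delicate point in the argument above is the $C^1$-regularity of $\rho\mapsto\theta_\rho$ used in the mean value step; this is inherited from the continuity of $\HE_\rho$ in Lemma~\ref{continuity} together with the implicit differentiation already carried out in Lemma~\ref{lemdiff1}, so no further estimates are required.
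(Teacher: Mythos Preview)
Your proposal is correct and follows essentially the same approach as the paper's own proof: the lower bound $\epsilon_N\ge 0$ comes from \cref{NTstep3} being a Rayleigh quotient, and the strict decrease $\epsilon_N<\epsilon$ comes from the sign analysis of $\theta_\rho/\theta_\rho'$ using $\theta_\rho'<0$ (Lemma~\ref{lemdiff1}) together with $\theta_\lambda=0$. Your use of the mean value theorem to obtain $\theta_\rho<0$ is simply an explicit formulation of what the paper phrases as ``$\theta_\rho<\theta_\lambda=0$''; you also add the monotone-convergence observation that the paper leaves implicit.
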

    \begin{proof}
        According to \cref{NTstep3,deferr},
        \begin{equation*}
            \epsilon_{N} = \epsilon-\frac{\theta_{\rho}}{\theta_{\rho}^{\prime}}.
        \end{equation*}
        Due to \cref{lemdiff1}, we know $\theta_{\rho}<\theta_{\lambda}=0$. By combining it with $\theta_{\rho}^{\prime}<0$,
        \begin{equation*}
            \epsilon_{N} = \epsilon-\frac{\theta_{\rho}}{\theta_{\rho}^{\prime}} < \epsilon.
        \end{equation*}
        On the other hand, as $\rho_{N}$ is a Rayleigh quotient of $a(\cdot,\cdot)$, we have $\epsilon_{N} = \rho_{N}-\lambda\geq0$.
    \end{proof}

    \subsection{Convergence factor for the Newton-Schur method}
    In order to analyze the convergence, we need to use the well-known result about Newton's method.
    \begin{proposition}
        \label{quadcon}
        Let $\epsilon$ and $\epsilon_{N}$ be errors before and after one step of Newton iteration. Suppose the iterative point $\rho$ is in $(\lambda,\alpha)$, the error $\epsilon_{N}$ satisfies
        \begin{equation*}
            \epsilon_{N}= \frac{\theta_{\xi}^{\prime\prime}}{2\theta_{\rho}^{\prime}}\,\epsilon^{2},
        \end{equation*}
        where $\lambda\leq \xi\leq\rho$ and $\theta_{\xi}^{\prime\prime}\defi \theta^{\prime\prime}(\xi)$.
    \end{proposition}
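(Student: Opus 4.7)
The plan is to prove the quadratic convergence estimate by a direct application of Taylor's theorem with Lagrange remainder to the scalar function $\theta$ on the interval $[\lambda,\rho]$, exploiting that $\lambda$ is by definition the smallest root of $\theta_{\rho}=0$ via \cref{eigsp}. Expanding $\theta(\lambda)$ about the current iterate $\rho$ will produce an identity relating $\theta_{\rho}$, $\theta_{\rho}^{\prime}$ and $\theta_{\xi}^{\prime\prime}$ that reassembles exactly into the Newton update.

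First, I would invoke the regularity of $\theta$ on $(-\infty,\alpha)$. The continuous dependence of $\HE_{\rho}$ on $\rho$ established in \cref{continuity}, together with \cref{defSprime} and \cref{lemdiff1}, gives that $\theta$ is twice continuously differentiable in a neighborhood of every $\rho\in(\lambda,\alpha)$, so Taylor's theorem applies. I would then write
\begin{equation*}
    0=\theta_{\lambda}=\theta_{\rho}+\theta_{\rho}^{\prime}(\lambda-\rho)+\frac{1}{2}\theta_{\xi}^{\prime\prime}(\lambda-\rho)^{2}
\end{equation*}
for some $\xi\in[\lambda,\rho]$. Using $\rho-\lambda=\epsilon$ from \cref{deferr} and the fact that $\theta_{\rho}^{\prime}<0$ by \cref{lemdiff1} (so division by it is legitimate), this rearranges to
\begin{equation*}
    \frac{\theta_{\rho}}{\theta_{\rho}^{\prime}}=\epsilon-\frac{\theta_{\xi}^{\prime\prime}}{2\theta_{\rho}^{\prime}}\,\epsilon^{2}.
\end{equation*}

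The final step is to substitute this identity into the Newton update $\rho_{N}=\rho-\theta_{\rho}/\theta_{\rho}^{\prime}$, which immediately yields
\begin{equation*}
    \epsilon_{N}=\rho_{N}-\lambda=\epsilon-\frac{\theta_{\rho}}{\theta_{\rho}^{\prime}}=\frac{\theta_{\xi}^{\prime\prime}}{2\theta_{\rho}^{\prime}}\,\epsilon^{2},
\end{equation*}
which is the asserted formula.

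The only nontrivial ingredient is the $C^{2}$ smoothness of $\rho\mapsto\theta_{\rho}$ on $(\lambda,\alpha)$; once that is in hand the argument reduces to the classical one-dimensional Newton analysis. Securing this smoothness amounts to differentiating the expression in \cref{lemdiff1} once more and thus to controlling the derivative of $\rho\mapsto\HE_{\rho}u_{\rho}$ in $\rho$, for which the Lipschitz-type bound of \cref{continuity} is the key estimate that promotes continuous dependence of $\HE_{\rho}$ on $\rho$ to the required second-order differentiability of $\theta_{\rho}$. I expect this smoothness verification, rather than the Taylor expansion itself, to be the most delicate step of the argument.
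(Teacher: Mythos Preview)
Your proposal is correct and follows the standard one-dimensional Taylor expansion argument for Newton's method. The paper does not actually supply a proof of \cref{quadcon}; it introduces the proposition as ``the well-known result about Newton's method'' and states it without argument, so your write-up is precisely the expected justification. The only caveat is that the $C^{2}$ regularity of $\rho\mapsto\theta_{\rho}$, which you correctly flag as the delicate point, is not proved in the paper either (it is used implicitly here and again in \cref{lemdiff2}); so this is a genuine prerequisite you would need to establish, but it is not something the paper handles more carefully than you do.
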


    The Newton-Schur method will converge quadratically if $\theta_{\xi}^{\prime\prime}/2\theta_{\rho}^{\prime}$ is bounded. In this paper, we refer to the upper bound of $\theta_{\xi}^{\prime\prime}/2\theta_{\rho}^{\prime}$ as convergence factor $\eta$, \ie
    \begin{equation*}
        \eta = \sup_{\lambda\leq\xi\leq\rho\leq\rho_{0}}\frac{\theta_{\xi}^{\prime\prime}}{2\theta_{\rho}^{\prime}},
    \end{equation*}
    then errors satisfy $\epsilon_{N}\leq \eta\epsilon^{2}$.
    \begin{lemma}
        \label{lemdiff2}
        Assume $(\theta_{\rho},u_{\rho})$ is the smallest eigenpair of $S_{\rho}$ as \cref{schureig}. If $\rho<\alpha$, then the second order derivative $\theta_{\rho}^{\prime\prime}$ respect to $\rho$ satisfies
        \begin{equation*}
            \theta_{\rho}^{\prime\prime}\leq0.
        \end{equation*}
    \end{lemma}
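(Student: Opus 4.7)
The plan is to prove concavity of $\rho\mapsto\theta_\rho$ on $(-\infty,\alpha)$, which immediately yields $\theta_\rho''\le 0$. The strategy avoids differentiating the first-derivative formula from \cref{lemdiff1} twice (which would require handling the derivatives $u_\rho'$ and $\HE_\rho' u$) by replacing it with a variational representation in which the $\rho$-dependence is reduced to simple affine dependence.

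The first step is to establish a Dirichlet-type minimization principle for $\langle S_\rho u,u\rangle$. Fix some reference $\rho_0<\alpha$ and, given $u\in V_\Gamma$, set $w_0=\HE_{\rho_0}u$. By \cref{ext} the set of admissible extensions $w_0+V_I$ is independent of $\rho$, and \cref{decompositionV} together with the $a_\rho$-orthogonality defining $V_{B,\rho}$ yields
\begin{equation*}
    a_\rho(w_0+v_I,w_0+v_I)-a_\rho(\HE_\rho u,\HE_\rho u)=a_\rho(v_I-v_I^*,v_I-v_I^*)\ge 0,
\end{equation*}
where $v_I^*\in V_I$ is determined by $\HE_\rho u=w_0+v_I^*$, and nonnegativity follows from coercivity \cref{coercive} because $\rho<\alpha$. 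Hence
\begin{equation*}
    \dual{S_\rho u,u}=a_\rho(\HE_\rho u,\HE_\rho u)=\min_{v_I\in V_I}\bigl[a(w_0+v_I,w_0+v_I)-\rho\,(w_0+v_I,w_0+v_I)\bigr].
\end{equation*}

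The second step is the concavity observation. For each fixed $v_I\in V_I$ the map $\rho\mapsto a(w_0+v_I,w_0+v_I)-\rho\,(w_0+v_I,w_0+v_I)$ is affine in $\rho$, and the feasible set $w_0+V_I$ does not depend on $\rho$. Therefore $\rho\mapsto\dual{S_\rho u,u}$ is the pointwise infimum of a family of affine functions of $\rho$, hence concave. Dividing by the positive $\rho$-independent constant $(u,u)_\Gamma$ preserves concavity, so $\rho\mapsto \dual{S_\rho u,u}/(u,u)_\Gamma$ is concave for every nonzero $u\in V_\Gamma$.

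The third step is to invoke the Rayleigh-quotient characterization of the smallest eigenvalue of the symmetric operator $S_\rho$, namely
\begin{equation*}
    \theta_\rho=\inf_{0\neq u\in V_\Gamma}\frac{\dual{S_\rho u,u}}{(u,u)_\Gamma},
\end{equation*}
so $\theta_\rho$ is an infimum of concave functions of $\rho$ and is therefore itself concave on $(-\infty,\alpha)$. Since differentiability of $\theta_\rho$ through simple-eigenvalue perturbation theory is already available (it underlies \cref{lemdiff1}), concavity gives $\theta_\rho''\le 0$. The only potential obstacle I foresee is justifying the Rayleigh-quotient/min-max formula for $\theta_\rho$ in the present abstract Hilbert-space framework, but this follows from the symmetry of $S_\rho$ together with the assumption that $\theta_\rho$ is the smallest eigenpair already used in \cref{schureig,lemdiff1}.
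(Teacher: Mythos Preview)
Your argument is correct and takes a genuinely different route from the paper. The paper differentiates the eigenvalue equation twice, obtains the identity
\[
\theta_\rho''\,(u_\rho,u_\rho)_\Gamma=\dual{S_\rho'' u_\rho,u_\rho}+2\theta_\rho\,(u_\rho',u_\rho')_\Gamma-2\,\dual{S_\rho u_\rho',u_\rho'},
\]
bounds the last two terms by the Rayleigh-quotient inequality, and then computes $\dual{S_\rho'' u_\rho,u_\rho}$ explicitly via $S_\rho'=-(\HE_\rho\cdot,\HE_\rho\cdot)$ together with the identity $a_\rho(\delta\HE u,\delta\HE u)=\varepsilon\,(\delta\HE u,\HE_{\rho+\varepsilon}u)$ to show it is nonpositive. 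Your approach bypasses all of this: writing $\dual{S_\rho u,u}=\min_{v_I\in V_I}a_\rho(w_0+v_I,w_0+v_I)$ exposes $\dual{S_\rho u,u}$ as an infimum of affine functions of $\rho$ over a $\rho$-independent feasible set, and a second infimum over $u$ gives concavity of $\theta_\rho$ directly. This is shorter and more conceptual, and it immediately explains the paper's later remark that concavity holds for the higher eigenvalues as well (just replace the outer infimum by the min--max formula). What the paper's computational route buys, however, is the explicit decomposition above, which is reused verbatim in the proof of \cref{main1} to obtain quantitative bounds on $\theta_\rho''/\theta_\rho'$; your concavity argument proves the sign but does not furnish those intermediate expressions.
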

    \begin{proof}
    First, let us take the derivative in \cref{diff1} respect to $\rho$ again,
    \begin{equation*}
        \dual{S^{\prime\prime}_{\rho}u_{\rho},u}+2\,\dual{S_{\rho}^{\prime}u^{\prime}_{\rho},u}+\dual{S_{\rho}u^{\prime\prime}_{\rho},u}=\theta_{\rho}^{\prime\prime}\,(u_{\rho},u)_{\Gamma}+2\,\theta_{\rho}^{\prime}\,(u_{\rho}^{\prime},u)_{\Gamma}+\theta_{\rho}\,(u_{\rho}^{\prime\prime},u)_{\Gamma}.
    \end{equation*}
    Let $u=u_{\rho}$ and by using \cref{schureig}, it becomes
    \begin{equation*}
        \theta_{\rho}^{\prime\prime}(u_{\rho},u_{\rho})_{\Gamma}=
        \dual{S^{\prime\prime}_{\rho}u_{\rho},u_{\rho}}-2\,\theta_{\rho}^{\prime}\,(u_{\rho}^{\prime},u_{\rho})_{\Gamma}+2\,\dual{S_{\rho}^{\prime}u^{\prime}_{\rho},u_{\rho}}.
    \end{equation*}
    Let $u=u_{\rho}^{\prime}$ in \cref{diff1},
    \begin{equation}
        \label{diff12}
        \dual{S^{\prime}_{\rho}u_{\rho},u_{\rho}^{\prime}}+\dual{S_{\rho}u^{\prime}_{\rho},u_{\rho}^{\prime}}=\theta_{\rho}^{\prime}\,(u_{\rho},u_{\rho}^{\prime})_{\Gamma}+\theta_{\rho}\,(u_{\rho}^{\prime},u_{\rho}^{\prime})_{\Gamma}.
    \end{equation}
    Combining these two equations above, we know
    \begin{equation}
        \label{diff2}
        \theta_{\rho}^{\prime\prime}(u_{\rho},u_{\rho})_{\Gamma}=
        \dual{S^{\prime\prime}_{\rho}u_{\rho},u_{\rho}}+2\,\theta_{\rho}\,(u_{\rho}^{\prime},u_{\rho}^{\prime})_{\Gamma}-2\,\dual{S_{\rho}u^{\prime}_{\rho},u_{\rho}^{\prime}}.
    \end{equation}
    Since $\theta_{\rho}$ is the smallest eigenvalue of $S_{\rho}$, we have $\theta_{\rho}\,(u_{\rho}^{\prime},u_{\rho}^{\prime})_{\Gamma}\leq\dual{S_{\rho}u^{\prime}_{\rho},u_{\rho}^{\prime}}$. Thus, the $\theta_{\rho}^{\prime\prime}$ can be bounded by
    \begin{equation}
        \label{diff2cb1}
        \theta_{\rho}^{\prime\prime}\,(u_{\rho},u_{\rho})_{\Gamma}\leq\dual{S^{\prime\prime}_{\rho}u_{\rho},u_{\rho}}.
    \end{equation}
    By taking the derivative of $S_{\rho}^{\prime}$ in \cref{lemdiff1},
    \begin{equation*}
        \dual{S^{\prime\prime}_{\rho}u_{\rho},u_{\rho}}
        =\lim_{\varepsilon\to0}\frac{1}{\varepsilon}\Big((\HE_{\rho}u_{\rho},\HE_{\rho}u_{\rho})-(\HE_{\rho+\varepsilon}u_{\rho},\HE_{\rho+\varepsilon}u_{\rho})\Big).
    \end{equation*}
    Let $\delta\HE  = \HE_{\rho+\varepsilon}-\HE_{\rho}$, then
    \begin{equation}
        \label{spp}
        \begin{aligned}
            \dual{S^{\prime\prime}_{\rho}u_{\rho},u_{\rho}}
            &=\lim_{\varepsilon\to0}\frac{1}{\varepsilon}\Big((\HE_{\rho}u_{\rho}-\HE_{\rho+\varepsilon}u_{\rho},\HE_{\rho}u_{\rho})+(\HE_{\rho}u_{\rho}-\HE_{\rho+\varepsilon}u_{\rho},\HE_{\rho+\varepsilon}u_{\rho})\Big)\\
            &=\lim_{\varepsilon\to0}\frac{1}{\varepsilon}\Big(-(\delta\HE u_{\rho},\HE_{\rho}u_{\rho})-(\delta\HE u_{\rho},\HE_{\rho+\varepsilon}u_{\rho})\Big)\\
            &=-\lim_{\varepsilon\to0}\frac{1}{\varepsilon}\Big(2\,(\delta\HE u_{\rho},\HE_{\rho+\varepsilon}u_{\rho})-(\delta\HE u_{\rho},\delta\HE u_{\rho})\Big).
        \end{aligned}
    \end{equation}
    According to \cref{delta-eps},
    \begin{equation*}
        a_{\rho}(\delta\HE u_{\rho},\delta\HE u_{\rho})
        =\varepsilon (\delta\HE u_{\rho},\HE_{\rho+\varepsilon}u_{\rho}).
    \end{equation*}
    Substituting it into \cref{spp}, we have
    \begin{equation}
        \label{diff2cb2}
        \dual{S^{\prime\prime}_{\rho}u_{\rho},u_{\rho}}=-\lim_{\varepsilon\to0}\frac{1}{\varepsilon}\Big(\frac{2}{\varepsilon}\,a_{\rho}(\delta\HE u_{\rho},\delta\HE u_{\rho})-(\delta\HE u_{\rho},\delta\HE u_{\rho})\Big).
    \end{equation}
    Since $a_{\rho}(\cdot,\cdot)$ is coercive on $V_{I}$, then
    \begin{equation}
        \label{diff2cb3}
        (\delta\HE u_{\rho},\delta\HE u_{\rho})\leq \frac{1}{\alpha-\rho}\,a_{\rho}(\delta\HE u_{\rho},\delta\HE u_{\rho}).
    \end{equation}
    Therefore, by combining \cref{diff2cb2,diff2cb1,diff2cb3},
    \begin{equation}
        \label{thetadiff2}
        \theta_{\rho}^{\prime\prime}\leq
        \frac{\dual{S^{\prime\prime}_{\rho}u_{\rho},u_{\rho}}}{(u_{\rho},u_{\rho})_{\Gamma}}\leq
        \frac{-1}{(u_{\rho},u_{\rho})_{\Gamma}}\lim_{\varepsilon\to0}\bigg(\Big(\frac{2}{\varepsilon^{2}}-\frac{1}{\varepsilon\,(\alpha-\rho)}\Big)\,a_{\rho}(\delta\HE u_{\rho},\delta\HE u_{\rho})\bigg)\leq0.
    \end{equation}
    The lemma is proved.
    \end{proof}
    \begin{remark}
        \cref{lemdiff2} also holds for other eigenvalues of $S_{\rho}$ if $\rho<\alpha$.
    \end{remark}

    \cref{quadcon} can be relaxed to
    \begin{equation*}
        \epsilon_{N}\leq\sup_{\xi\in(\lambda,\rho)}\frac{\theta_{\xi}^{\prime\prime}}{2\theta_{\xi}^{\prime}}\,\epsilon^{2}
    \end{equation*}
    based on \cref{lemdiff2}.
    \begin{theorem}
        \label{main1}
        Suppose $\rho_{0}\in(\lambda,\alpha)$ is the initial point of Algorithm~\ref{algo}, denote $g_{\rho}$ the gap between the smallest two eigenvalues of $S_{\rho}$, if there exists a constant $c_{g}>0$ such that
        \begin{equation}
            \label{gap}
            g_{\rho}\geq c_{g}
        \end{equation}
        for all $\rho\in(\lambda,\rho_{0})$, then  Algorithm~\ref{algo} is quadratic convergence, and
        \begin{equation*}
            \epsilon_{N}\leq \Big(\frac{1}{\alpha-\rho_{0}}+\frac{c_{\HE}^{2}}{c_{g}}\Big)\,\epsilon^{2}.
        \end{equation*}
    \end{theorem}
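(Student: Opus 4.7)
The plan is to instantiate the relaxed bound $\epsilon_N \leq \sup_{\xi\in(\lambda,\rho)}\theta_\xi''/(2\theta_\xi')\,\epsilon^2$ by producing a uniform estimate of the ratio $\theta_\rho''/(2\theta_\rho')$ on $(\lambda,\rho_0)$. \cref{lemdiff1} already supplies $-2\theta_\rho'(u_\rho,u_\rho)_\Gamma = 2\norm{\HE_\rho u_\rho}^2$, so the substantive task is to bound $-\theta_\rho''(u_\rho,u_\rho)_\Gamma$ above by a constant multiple of the same quantity $\norm{\HE_\rho u_\rho}^2$. The two summands $1/(\alpha-\rho_0)$ and $c_\HE^2/c_g$ in the claimed estimate will track the two mechanisms that control $\theta_\rho''$, namely coercivity on $V_I$ and the spectral gap of $S_\rho$, arising from the two terms on the right-hand side of \cref{diff2}.

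I would start from the exact identity \cref{diff2}, but in contrast to the proof of \cref{lemdiff2} I would keep rather than discard the eigenvector-derivative contribution. Differentiating the normalization $(u_\rho,u_\rho)_\Gamma = 1$ gives $(u_\rho',u_\rho)_\Gamma = 0$, so $u_\rho'$ is $\Gamma$-orthogonal to $u_\rho$. Choosing $u = u_\rho'$ in \cref{diff1} and invoking \cref{defSprime} produces
\begin{equation*}
    \dual{S_\rho u_\rho',u_\rho'} - \theta_\rho\,(u_\rho',u_\rho')_\Gamma = (\HE_\rho u_\rho,\HE_\rho u_\rho').
\end{equation*}
Because $u_\rho'$ is orthogonal to the ground eigenvector, the variational characterization of the second eigenvalue combined with $g_\rho \geq c_g$ gives the lower bound $\dual{S_\rho u_\rho',u_\rho'} - \theta_\rho(u_\rho',u_\rho')_\Gamma \geq c_g\norm{u_\rho'}_\Gamma^2$. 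A Cauchy--Schwarz step together with the extension bound \cref{bound} then yields $\norm{u_\rho'}_\Gamma \leq c_\HE\norm{\HE_\rho u_\rho}/c_g$, and feeding this back into the displayed identity produces the desired estimate
\begin{equation*}
    \dual{S_\rho u_\rho',u_\rho'} - \theta_\rho(u_\rho',u_\rho')_\Gamma \leq \frac{c_\HE^2\,\norm{\HE_\rho u_\rho}^2}{c_g}.
\end{equation*}

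For the remaining $S_\rho''$ term I would revisit the limit \cref{diff2cb2}. \cref{continuity} guarantees $\norm{\delta\HE u_\rho} = O(\varepsilon)$, so $(\delta\HE u_\rho,\delta\HE u_\rho)/\varepsilon = O(\varepsilon)$ vanishes, while \cref{delta-eps} (together with the trivial identity $a_\rho = a_{\rho+\varepsilon} + \varepsilon(\cdot,\cdot)$ and an $O(\varepsilon^2)$ remainder) identifies the surviving term as $-\dual{S_\rho''u_\rho,u_\rho} = 2a_\rho(\HE_\rho'u_\rho,\HE_\rho'u_\rho) = 2(\HE_\rho'u_\rho,\HE_\rho u_\rho)$, where the second equality again comes from \cref{delta-eps} in the limit. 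The same coercivity step as in \cref{diff2cb3} then produces $\norm{\HE_\rho'u_\rho} \leq \norm{\HE_\rho u_\rho}/(\alpha-\rho)$, hence $-\dual{S_\rho''u_\rho,u_\rho} \leq 2\norm{\HE_\rho u_\rho}^2/(\alpha-\rho)$.

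Substituting both estimates into \cref{diff2} and dividing by $-2\theta_\rho'(u_\rho,u_\rho)_\Gamma = 2\norm{\HE_\rho u_\rho}^2$ cancels the common factor cleanly and delivers
\begin{equation*}
    \frac{\theta_\rho''}{2\theta_\rho'} \leq \frac{1}{\alpha-\rho} + \frac{c_\HE^2}{g_\rho}.
\end{equation*}
Monotonicity on $\rho \in (\lambda,\rho_0)$, namely $\alpha - \rho \geq \alpha - \rho_0$ and $g_\rho \geq c_g$, upgrades this to the stated uniform constant, and inserting into the relaxed Newton estimate finishes the proof. The step I expect to require the most care is the $S_\rho''$ limit: each of the two summands inside \cref{diff2cb2} blows up like $1/\varepsilon$, so I must justify precisely that only the $a_\rho$-part survives in the limit and recast it as a clean quadratic form in $\HE_\rho'u_\rho$ that the coercivity constant can tame.
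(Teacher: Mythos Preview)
Your argument is correct and follows essentially the same route as the paper: both split $-\theta_\rho''$ via \cref{diff2} into the $-\dual{S_\rho''u_\rho,u_\rho}$ part (bounded by $2\norm{v_\rho}^2/(\alpha-\rho)$ using coercivity on $V_I$ through \cref{delta-eps}/\cref{continuity}) and the $\dual{S_\rho u_\rho',u_\rho'}-\theta_\rho\norm{u_\rho'}_\Gamma^2$ part (bounded by $c_\HE^2\norm{v_\rho}^2/c_g$ via the orthogonality $(u_\rho,u_\rho')_\Gamma=0$, the spectral gap, and Cauchy--Schwarz), then divide by $-2\theta_\rho'=2\norm{v_\rho}^2$ and appeal to the relaxed Newton estimate. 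The only cosmetic difference is that you pass to the limit first and phrase the $S_\rho''$ bound in terms of $\HE_\rho'$, whereas the paper keeps the increment $\delta\HE$ throughout; your closing worry about $1/\varepsilon$ blowup is misplaced (both summands in \cref{diff2cb2} are $O(1)$ or smaller), but the actual computation you give is fine.
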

    \begin{proof}
        Suppose $\norm{u_{\rho}}_{\Gamma}=1$, according to \cref{diff2}, $(\theta_{\rho}^{\prime\prime}/2\theta_{\rho}^{\prime})$ can be divided into two parts:
        \begin{equation}
            \label{est1}
            -\dual{S_{\rho}^{\prime\prime}u_{\rho},u_{\rho}}\leq C_{1}\,\norm{v_{\rho}}^{2},
        \end{equation}
        and
        \begin{equation}
            \label{est2}
            \dual{S_{\rho}u_{\rho}^{\prime},u_{\rho}^{\prime}}-\theta_{\rho}(u_{\rho}^{\prime},u_{\rho}^{\prime})_{\Gamma}\leq C_{2}\,\norm{v_{\rho}}^{2},
        \end{equation}
        where $v_{\rho}=\HE_{\rho}u_{\rho}$. According to \cref{spp},
        \begin{equation*}
            \begin{aligned}
            -\dual{S_{\rho}^{\prime\prime}u_{\rho},u_{\rho}}&=\lim_{\varepsilon\to0}\frac{1}{\varepsilon}\Big(2\,(\delta \HE u_{\rho},\HE_{\rho+\varepsilon} u_{\rho})-(\delta \HE u_{\rho},\delta \HE u_{\rho})\Big)\\
            &=\lim_{\varepsilon\to0}\frac{1}{\varepsilon}\Big(2\,(\delta \HE u_{\rho},\HE_{\rho} u_{\rho})+(\delta \HE u_{\rho},\delta \HE u_{\rho})\Big)\\
            &\leq \lim_{\varepsilon\to0}\frac{\norm{\delta\HE u_{\rho}}}{\abs{\varepsilon}}\,(2\,\norm{\HE_{\rho}u_{\rho}} +\norm{\delta \HE u_{\rho}})
            \end{aligned}
        \end{equation*}
        Due to \cref{continuity},
        \begin{equation*}
            \begin{aligned}
                -\dual{S_{\rho}^{\prime\prime}u_{\rho},u_{\rho}}&\leq
            \lim_{\varepsilon\to0}\frac{\norm{\HE_{\rho}u_{\rho}}}{(\alpha-\rho-\varepsilon)}\Big(2\,\norm{\HE_{\rho} u_{\rho}} +\frac{\abs{\varepsilon}}{\alpha-\rho}\norm{\HE_{\rho} u_{\rho}}\Big)\\ &=\frac{2}{\alpha-\rho}\norm{v_{\rho}}^{2}\leq \frac{2}{\alpha-\rho_{0}}\norm{v_{\rho}}^{2}.
            \end{aligned}
        \end{equation*}
        Then the constant $C_{1}$ in estimation \cref{est1} can be $2\,(\alpha-\rho_{0})^{-1}$. For the estimation \cref{est2}, an orthogonality about $u_{\rho}$ and $u_{\rho}^{\prime}$ is needed. By taking the derivative on both sides of $\norm{u_{\rho}}_{\Gamma}^{2}=1$, we have $(u_{\rho},u_{\rho}^{\prime})_{\Gamma}=0$, which means $u_{\rho}$ and $u_{\rho}^{\prime}$ are orthogonal in $(\cdot,\cdot)_{\Gamma}$. Let
        \begin{equation}
            \label{defVGr}
            V_{\Gamma,\rho}\defi \{u\in V_{\Gamma}\mid (u,u_{\rho})_{\Gamma}=0\}.
        \end{equation}
        Since $g_{\rho}$ is the gap between $\theta_{\rho}$ and the smallest eigenvalue of $S_{\rho}$ in $V_{\Gamma,\rho}$, and $u_{\rho}^{\prime}\in V_{\Gamma,\rho}$,
        \begin{equation}
            \label{est20}
            \dual{S_{\rho}u_{\rho}^{\prime},u_{\rho}^{\prime}}- \theta_{\rho}\,(u_{\rho}^{\prime},u_{\rho}^{\prime})_{\Gamma}\geq g_{\rho}\,\norm{u_{\rho}^{\prime}}_{\Gamma}^{2}.
        \end{equation}
        Let $u=u_{\rho}^{\prime}$ in \cref{diff1}, by combining the orthogonality of $u_{\rho}$ and $u_{\rho}^{\prime}$,
        \begin{equation}
            \label{est21}
            \dual{S_{\rho}u_{\rho}^{\prime},u_{\rho}^{\prime}}- \theta_{\rho}\,(u_{\rho}^{\prime},u_{\rho}^{\prime})_{\Gamma}=
            -\dual{S_{\rho}^{\prime}u_{\rho},u_{\rho}^{\prime}}+\theta_{\rho}^{\prime}\,(u_{\rho},u_{\rho}^{\prime})=
            -\dual{S_{\rho}^{\prime}u_{\rho},u_{\rho}^{\prime}}.
        \end{equation}
        According to the Cauchy-Schwarz inequality and \cref{defSprime},
        \begin{equation}
            \label{est22}
            -\dual{S_{\rho}^{\prime}u_{\rho},u_{\rho}^{\prime}}=
            (\HE_{\rho}u_{\rho},\HE_{\rho}u_{\rho}^{\prime})\leq \norm{v_{\rho}}\,\norm{\HE_{\rho}u_{\rho}^{\prime}}\leq c_{\HE}\norm{v_{\rho}}\,\norm{u_{\rho}^{\prime}}_{\Gamma}.
        \end{equation}
        By combining the inequalities \cref{est20,est21,est22} and eliminating $\norm{u_{\rho}^{\prime}}_{\Gamma}$ on both sides,
        \begin{equation}
            \label{est23}
            \norm{u_{\rho}^{\prime}}_{\Gamma}\leq \frac{c_{\HE}}{g_{\rho}}\,\norm{v_{\rho}} .
        \end{equation}
        And combining the inequalities \cref{est21,est22,est23}, we have
        \begin{equation*}
            \dual{S_{\rho}u_{\rho}^{\prime},u_{\rho}^{\prime}}- \theta_{\rho}\,(u_{\rho}^{\prime},u_{\rho}^{\prime})_{\Gamma}\leq
            \frac{c_{\HE}^{2}}{g_{\rho}}\,\norm{v_{\rho}}^{2}\leq
            \frac{c_{\HE}^{2}}{c_{g}}\,\norm{v_{\rho}}^{2},
        \end{equation*}
        which means $C_{2}$ in estimation \cref{est2} can be set as $c_{\HE}^{2}c_{g}^{-1}$. Therefore, the constant in \cref{main1} can be $\big((\alpha-\rho_{0})^{-1}+c_{\HE}^{2}c_{g}^{-1}\big)$,
        which is independent of $\rho$.
    \end{proof}

    The condition \cref{gap} in \cref{main1} may be difficult to verified directly, here we give a lemma for the existence of $c_{g}$.

    \begin{lemma}
        \label{lemgap}
        Let $\widehat{\lambda}$ be the second smallest eigenvalue of $a(\cdot,\cdot)$. If the initial point $\rho_{0}\in(\lambda,\widehat{\lambda})$ and $\widehat{\lambda}<\alpha$, then the constant $c_{g}$ in \cref{main1} exists and
        \begin{equation*}
            c_{g}\geq \frac{\widehat{\lambda}-\rho_{0}}{\widehat{\lambda}-\lambda}\,\theta^{(2)}_{\lambda},
        \end{equation*}
        where $\theta^{(2)}_{\lambda}$ is the second smallest eigenvalue of $S_{\lambda}$.
    \end{lemma}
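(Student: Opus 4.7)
The plan is to exploit the fact (noted in the remark after \cref{lemdiff2}) that for each $i$, the $i$\nobreakdash-th smallest eigenvalue $\theta_{\rho}^{(i)}$ of $S_{\rho}$, regarded as a function of $\rho$ on $(-\infty,\alpha)$, is strictly decreasing (by the analogue of \cref{lemdiff1}) and concave (by the analogue of \cref{lemdiff2}). Writing $g_{\rho}=\theta_{\rho}^{(2)}-\theta_{\rho}$, the strategy is to bound $\theta_{\rho}^{(2)}$ from below on $[\lambda,\widehat{\lambda}]$ using the secant chord guaranteed by concavity, and to observe that $\theta_{\rho}\leq 0$ for $\rho\geq\lambda$, so that this bound is in fact a bound on $g_{\rho}$.

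First I would identify the two pieces of boundary data needed to pin down the chord. The argument around \cref{eigsp}--\cref{eigtmp} shows that zeros of the eigenvalue curves $\rho\mapsto\theta_{\rho}^{(i)}$ correspond exactly to eigenvalues of $a(\cdot,\cdot)$; combined with strict monotonicity and the fact that the curves are pointwise ordered (so they never cross), this forces $\theta_{\lambda}^{(1)}=0$ and $\theta_{\widehat{\lambda}}^{(2)}=0$. The remaining value $\theta_{\lambda}^{(2)}>0$ is the quantity that appears in the conclusion.

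Next, concavity of $\rho\mapsto\theta_{\rho}^{(2)}$ on $[\lambda,\widehat{\lambda}]$ yields the chord inequality
\begin{equation*}
    \theta_{\rho}^{(2)}\geq\frac{\widehat{\lambda}-\rho}{\widehat{\lambda}-\lambda}\,\theta_{\lambda}^{(2)}+\frac{\rho-\lambda}{\widehat{\lambda}-\lambda}\,\theta_{\widehat{\lambda}}^{(2)}=\frac{\widehat{\lambda}-\rho}{\widehat{\lambda}-\lambda}\,\theta_{\lambda}^{(2)}
\end{equation*}
for every $\rho\in[\lambda,\widehat{\lambda}]$. Since $\rho\mapsto\theta_{\rho}$ is decreasing and $\theta_{\lambda}=0$, we also have $\theta_{\rho}\leq 0$ for $\rho\geq\lambda$, hence $g_{\rho}=\theta_{\rho}^{(2)}-\theta_{\rho}\geq\theta_{\rho}^{(2)}$. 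For $\rho\in(\lambda,\rho_{0})\subset(\lambda,\widehat{\lambda})$ the factor $(\widehat{\lambda}-\rho)/(\widehat{\lambda}-\lambda)$ is at least $(\widehat{\lambda}-\rho_{0})/(\widehat{\lambda}-\lambda)$, so combining the two bounds gives
\begin{equation*}
    g_{\rho}\geq\frac{\widehat{\lambda}-\rho_{0}}{\widehat{\lambda}-\lambda}\,\theta_{\lambda}^{(2)},
\end{equation*}
which is the claimed value of $c_{g}$.

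The main obstacle is the bookkeeping in the first step, namely making sure that the zero of $\theta_{\rho}^{(2)}$ is attained precisely at $\rho=\widehat{\lambda}$ rather than at some other eigenvalue of $a(\cdot,\cdot)$. This requires a short continuity and non-crossing argument: at $\rho=\lambda$ we have $\theta_{\lambda}^{(1)}=0<\theta_{\lambda}^{(2)}\leq\theta_{\lambda}^{(3)}\leq\cdots$, and as $\rho$ increases each curve decreases strictly, so the next curve to reach zero is $\theta_{\rho}^{(2)}$, which does so exactly at the next eigenvalue of $a(\cdot,\cdot)$, i.e.\ at $\widehat{\lambda}$. Once this identification is secured, the concavity bound together with the sign of $\theta_{\rho}$ finishes the proof.
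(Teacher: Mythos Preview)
Your proof is correct and follows essentially the same route as the paper: both use the concavity of $\rho\mapsto\theta_{\rho}^{(2)}$ on $[\lambda,\widehat{\lambda}]$ together with $\theta_{\widehat{\lambda}}^{(2)}=0$ to obtain the chord bound $\theta_{\rho}^{(2)}\geq\frac{\widehat{\lambda}-\rho}{\widehat{\lambda}-\lambda}\theta_{\lambda}^{(2)}$, and then pass to $g_{\rho}$ via $\theta_{\rho}\leq 0$. Your write-up is in fact a bit more explicit than the paper's about the two auxiliary points (why $\theta_{\widehat{\lambda}}^{(2)}=0$ and why $g_{\rho}\geq\theta_{\rho}^{(2)}$), which the paper leaves implicit.
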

    \begin{proof}
        Suppose $\theta_{\rho}^{(2)}$ is the second smallest eigenvalue of $S_{\rho}$, since $\theta_{\rho}^{(2)}$ is concave,
        \begin{equation*}
            \frac{\theta^{(2)}_{\rho}-\theta^{(2)}_{\lambda}}{\rho-\lambda}\geq\frac{\theta^{(2)}_{\rho}-\theta^{(2)}_{\widehat{\lambda}}}{\rho-\widehat{\lambda}}
        \end{equation*}
        for $\lambda<\rho<\widehat{\lambda}$. Since $\widehat{\lambda}$ is the second smallest eigenvalue of $a(\cdot,\cdot)$, we know that $\theta_{\widehat{\lambda}}^{(2)}=0$. By using $\theta^{(2)}_{\lambda}>\theta^{(2)}_{\widehat{\lambda}}=0$ and
        \begin{equation*}
            \theta^{(2)}_{\rho}\geq\frac{\widehat{\lambda}-\rho}{\widehat{\lambda}-\lambda}\,\theta^{(2)}_{\lambda}\geq\frac{\widehat{\lambda}-\rho_{0}}{\widehat{\lambda}-\lambda}\,\theta^{(2)}_{\lambda},
        \end{equation*}
        the proof is finished.
    \end{proof}

    If the bilinear form $a(\cdot,\cdot)$ satisfies some more conditions, the convergence factor $\eta$ can be estimated more specifically.
    \begin{lemma}
        \label{lemgapmulti}
        Suppose $\rho_{0}\in (\lambda,\widehat{\lambda})$ is the initial point, where $\lambda$ and $\widehat{\lambda}$ are the smallest two eigenvalues of $a(\cdot,\cdot)$ respectively. If there exists a constant $c_{t}>0$ such that
        \begin{equation}
            \label{hiltrace}
            a(\HE_{\lambda}u,\HE_{\lambda}u)\geq c_{t}\norm{u}_{\Gamma}^{2}
        \end{equation}
        holds for all $u\in V_{\Gamma}$, then there is a lower bound for the constant $c_{g}$, \ie
        \begin{equation*}
            c_{g}\geq \frac{\widehat{\lambda}-\rho_{0}}{\widehat{\lambda}}\,c_{s}c_{t},
        \end{equation*}
        where $0<c_{s}=\theta_{\lambda}^{(2)}/\theta_{\lambda}^{(3)} \leq 1$
        and $\theta_{\lambda}^{(2)}$, $\theta_{\lambda}^{(3)}$ are the second and third smallest eigenvalue of $S_{\lambda}$ respectively, including multiplicities.
    \end{lemma}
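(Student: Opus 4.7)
The plan is to combine \cref{lemgap} with a Courant--Fischer lower bound on $\theta_{\lambda}^{(2)}$ obtained from the hypothesis \cref{hiltrace} and the variational principle for $a(\cdot,\cdot)$ on $V$. The chain of reasoning is: \cref{lemgap} already yields $c_{g}\geq\frac{\widehat{\lambda}-\rho_{0}}{\widehat{\lambda}-\lambda}\theta_{\lambda}^{(2)}$; I will prove
\begin{equation*}
\theta_{\lambda}^{(2)}\geq\frac{\widehat{\lambda}-\lambda}{\widehat{\lambda}}\,c_{t},
\end{equation*}
and then multiply these two inequalities and use $c_{s}\leq 1$ to obtain $c_{g}\geq\frac{\widehat{\lambda}-\rho_{0}}{\widehat{\lambda}}\,c_{t}\geq\frac{\widehat{\lambda}-\rho_{0}}{\widehat{\lambda}}\,c_{s}c_{t}$, which is the claim.

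For the bound on $\theta_{\lambda}^{(2)}$ I would use the Courant--Fischer max-min characterisation together with a carefully chosen one-dimensional $Z\subset V_{\Gamma}$. Let $v_{\lambda}\in V$ be a unit eigenvector of $a(\cdot,\cdot)$ for the smallest eigenvalue $\lambda$. The map $u\mapsto(\HE_{\lambda}u,v_{\lambda})$ is a bounded linear functional on $V_{\Gamma}$ (its norm is controlled by $c_{\HE}\norm{v_{\lambda}}$ via \cref{bound}), so by the Riesz representation theorem there exists $w_{\lambda}\in V_{\Gamma}$ with $(u,w_{\lambda})_{\Gamma}=(\HE_{\lambda}u,v_{\lambda})$ for every $u\in V_{\Gamma}$. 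I would take $Z=\spa{w_{\lambda}}$. Every $u$ that is $(\cdot,\cdot)_{\Gamma}$-orthogonal to $Z$ then satisfies $(\HE_{\lambda}u,v_{\lambda})=0$, so the variational characterisation of $\widehat{\lambda}$ on $V$ gives $a(\HE_{\lambda}u,\HE_{\lambda}u)\geq\widehat{\lambda}\,(\HE_{\lambda}u,\HE_{\lambda}u)$, which combined with \cref{hiltrace} produces
\begin{equation*}
\dual{S_{\lambda}u,u}=a(\HE_{\lambda}u,\HE_{\lambda}u)-\lambda(\HE_{\lambda}u,\HE_{\lambda}u)\geq\frac{\widehat{\lambda}-\lambda}{\widehat{\lambda}}\,a(\HE_{\lambda}u,\HE_{\lambda}u)\geq\frac{\widehat{\lambda}-\lambda}{\widehat{\lambda}}\,c_{t}\norm{u}_{\Gamma}^{2}.
\end{equation*}
Taking the infimum of the Rayleigh quotient over $u\perp Z$ then delivers the desired lower bound on $\theta_{\lambda}^{(2)}$.

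The step that needs care is the non-degeneracy check $w_{\lambda}\neq 0$, so that $Z$ is genuinely one-dimensional and the appeal to Courant--Fischer is legitimate. I would argue this by contradiction: if the functional $u\mapsto(\HE_{\lambda}u,v_{\lambda})$ vanished on all of $V_{\Gamma}$, the displayed estimate would apply to every $u$ and force $\theta_{\lambda}^{(1)}\geq\frac{\widehat{\lambda}-\lambda}{\widehat{\lambda}}c_{t}>0$; but $\lambda$ is an eigenvalue of $a(\cdot,\cdot)$, so $\theta_{\lambda}^{(1)}=0$, a contradiction. The remaining ingredients---the Rayleigh-quotient inequality $a(w,w)-\lambda(w,w)\geq\frac{\widehat{\lambda}-\lambda}{\widehat{\lambda}}a(w,w)$ whenever $w\perp v_{\lambda}$ in $(\cdot,\cdot)$, the algebraic manipulation with $c_{s}$, and the invocation of \cref{lemgap}---are routine once this non-degeneracy is in hand.
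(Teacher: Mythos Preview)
Your argument is correct and in fact delivers a slightly sharper bound than the paper's own proof. The paper proceeds differently: rather than invoking Courant--Fischer with an abstract one-dimensional subspace, it explicitly constructs a unit vector $\widehat{u}_{\lambda}$ in the two-dimensional span of the second and third eigenvectors $\widehat{u}_{2},\widehat{u}_{3}$ of $S_{\lambda}$, choosing the coefficients so that $(\HE_{\lambda}u_{\lambda},\HE_{\lambda}\widehat{u}_{\lambda})=0$. From this orthogonality the paper obtains $\dual{S_{\lambda}\widehat{u}_{\lambda},\widehat{u}_{\lambda}}\geq\tfrac{\widehat{\lambda}-\lambda}{\widehat{\lambda}}\,c_{t}$, but since the Rayleigh quotient of $\widehat{u}_{\lambda}$ can be as large as $\theta_{\lambda}^{(3)}$, passing back to $\theta_{\lambda}^{(2)}$ costs the factor $c_{s}=\theta_{\lambda}^{(2)}/\theta_{\lambda}^{(3)}$. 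Your Riesz-representative construction of $Z=\spa{w_{\lambda}}$ bypasses this entirely: the max--min principle gives $\theta_{\lambda}^{(2)}\geq\inf_{u\perp Z}\dual{S_{\lambda}u,u}/\norm{u}_{\Gamma}^{2}\geq\tfrac{\widehat{\lambda}-\lambda}{\widehat{\lambda}}\,c_{t}$ directly, so you end up with $c_{g}\geq\tfrac{\widehat{\lambda}-\rho_{0}}{\widehat{\lambda}}\,c_{t}$ and only insert $c_{s}\leq1$ at the very end to match the stated inequality. In short, your route is cleaner and shows the $c_{s}$ factor is not actually needed; the paper's route is more hands-on but pays for its explicit test vector with a weaker constant.
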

    \begin{proof}
        Let $\theta_{\lambda}$, $\theta^{(2)}_{\lambda}$ and $\theta^{(3)}_{\lambda}$ be the smallest three eigenvalues of $S_{\lambda}$ with corresponding eigenvectors $u_{\lambda}$, $\widehat{u}_{2}$ and $\widehat{u}_{3}$, where $\norm{u_{\lambda}}_{\Gamma}=\norm{\widehat{u}_{2}}_{\Gamma}=\norm{\widehat{u}_{3}}_{\Gamma}=1$ and $(u_{\lambda},\widehat{u}_{2})_{\Gamma}=(\widehat{u}_{2},\widehat{u}_{3})_{\Gamma}=(\widehat{u}_{3},u_{\lambda})_{\Gamma}=0$. Let
        \begin{equation*}
            \widehat{u}_{\lambda}=\left\{
                \begin{array}{cc}
                    \dfrac{(\HE_{\lambda}u_{\lambda},\HE_{\lambda}\widehat{u}_{2})\,\widehat{u}_{3}
                    -(\HE_{\lambda}u_{\lambda},\HE_{\lambda}\widehat{u}_{3})\,\widehat{u}_{2}}
                    {\norm{(\HE_{\lambda}u_{\lambda},\HE_{\lambda}\widehat{u}_{2})\,\widehat{u}_{3}
                    -(\HE_{\lambda}u_{\lambda},\HE_{\lambda}\widehat{u}_{3})\,\widehat{u}_{2}}_{\Gamma}} & \text{if } (\HE_{\lambda}u_{\lambda},\HE_{\lambda}\widehat{u}_{2}) \neq 0,\\ \\
                    \widehat{u}_{2} & \text{if } (\HE_{\lambda}u_{\lambda},\HE_{\lambda}\widehat{u}_{2}) = 0,
                \end{array}\right.
        \end{equation*}
        then it is easy to be verified that $\norm{\widehat{u}_{\lambda}}_{\Gamma}=1$ and $(\HE_{\lambda}u_{\lambda},\HE_{\lambda}\widehat{u}_{\lambda})=0$. Since $\HE_{\lambda}u_{\lambda}$ is the eigenvector corresponding to $\lambda$ of $a(\cdot,\cdot)$, $(\HE_{\lambda}u_{\lambda},\HE_{\lambda}\widehat{u}_{\lambda})=0$ leads to $\HE_{\lambda}\widehat{u}_{\lambda}$ lies in the eigenspace corresponding to the eigenvalue at least $\widehat{\lambda}$, then
        \begin{equation*}
            \begin{aligned}
                a_{\lambda}(\HE_{\lambda}\widehat{u}_{\lambda},\HE_{\lambda}\widehat{u}_{\lambda})&=a(\HE_{\lambda}\widehat{u}_{\lambda},\HE_{\lambda}\widehat{u}_{\lambda})-\lambda\,(\HE_{\lambda}\widehat{u}_{\lambda},\HE_{\lambda}\widehat{u}_{\lambda})\\ &\geq \Big(1-\frac{\lambda}{\widehat{\lambda}}\Big)\,a(\HE_{\lambda}\widehat{u}_{\lambda},\HE_{\lambda}\widehat{u}_{\lambda})\geq c_{t}\Big(1-\frac{\lambda}{\widehat{\lambda}}\Big).
            \end{aligned}
        \end{equation*}
        Since $\widehat{u}_{\lambda}$ is the linear combination of $\widehat{u}_{2}$ and $\widehat{u}_{3}$, and $\norm{\widehat{u}_{\lambda}}_{\Gamma}=1$,
        \begin{equation*}
            \theta_{\lambda}^{(2)}\leq \dual{S_{\lambda}\widehat{u}_{\lambda},\widehat{u}_{\lambda}}\leq \theta_{\lambda}^{(3)}.
        \end{equation*}
        Therefore, the second smallest eigenvalue of $S_{\lambda}$ satisfies
        \begin{equation*}
            \theta^{(2)}_{\lambda}=\frac{\theta_{\lambda}^{(2)}}{\theta_{\lambda}^{(3)}}\,\,\theta_{\lambda}^{(3)}
            \geq c_{s}\,\dual{S_{\lambda}\widehat{u}_{\lambda},\widehat{u}_{\lambda}}=
            c_{s}\,a_{\lambda}(\HE_{\lambda}\widehat{u}_{\lambda},\HE_{\lambda}\widehat{u}_{\lambda})\geq
            (1-\frac{\lambda}{\widehat{\lambda}})\,c_{s}c_{t}.
        \end{equation*}
        Then the constant $c_{g}$ can be bounded by
        \begin{equation*}
            c_{g}\geq \frac{\widehat{\lambda}-\rho_{0}}{\widehat{\lambda}}\,c_{s}c_{t}
        \end{equation*}
        by using \cref{lemgap}.
    \end{proof}
    \begin{corollary}
        \label{cor2}
        Let $\lambda$ and $\widehat{\lambda}$ be the smallest two eigenvalues of $a(\cdot,\cdot)$ satisfying $\lambda<\widehat{\lambda}<\alpha$. Suppose the constants $c_{s}$ and $c_{t}$ are defined as \cref{lemgapmulti}, if the initial point $\rho_{0}\in(\lambda,\widehat{\lambda})$, then the rate of convergence for Algorithm~\ref{algo} is
        \begin{equation*}
            \epsilon_{N}\leq \Big(\frac{1}{\alpha-\rho_{0}}+\frac{c_{\HE}^{2}\,\widehat{\lambda}}{c_{s}c_{t}\,(\widehat{\lambda}-\rho_{0})}\Big)\,\epsilon^{2}.
        \end{equation*}
    \end{corollary}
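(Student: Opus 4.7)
The plan is to assemble Corollary~\ref{cor2} almost mechanically from \cref{main1} and \cref{lemgapmulti}, since the former already provides the shape of the quadratic bound and the latter provides exactly the missing lower bound on the spectral-gap constant $c_g$. First I would check that the hypotheses of \cref{main1} are satisfied under the assumptions of the corollary: because $\rho_0\in(\lambda,\widehat{\lambda})$ and $\widehat{\lambda}<\alpha$, we have $\rho_0\in(\lambda,\alpha)$, so Algorithm~\ref{algo} is well-defined and \cref{convergence} applies. Thus the iteration stays in the interval $(\lambda,\rho_0)\subset(\lambda,\widehat{\lambda})$, which is precisely the range on which the gap estimate of \cref{lemgapmulti} is needed.

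Next I would invoke \cref{main1} to obtain
\begin{equation*}
    \epsilon_{N}\leq \Bigl(\frac{1}{\alpha-\rho_{0}}+\frac{c_{\HE}^{2}}{c_{g}}\Bigr)\,\epsilon^{2},
\end{equation*}
where $c_g$ is any positive lower bound on $g_\rho$ valid for all $\rho\in(\lambda,\rho_0)$. The content of \cref{lemgapmulti} is that under the additional assumption \cref{hiltrace}, one such admissible choice is $c_g=\frac{\widehat{\lambda}-\rho_{0}}{\widehat{\lambda}}\,c_{s}c_{t}$. Substituting this into the denominator yields
\begin{equation*}
    \frac{c_{\HE}^{2}}{c_{g}}\leq \frac{c_{\HE}^{2}\,\widehat{\lambda}}{c_{s}c_{t}\,(\widehat{\lambda}-\rho_{0})},
\end{equation*}
and adding $(\alpha-\rho_0)^{-1}$ gives the stated constant.

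The only genuine thing to verify is that the lower bound produced by \cref{lemgapmulti} is uniform in $\rho\in(\lambda,\rho_0)$ rather than just at the starting point; but a glance at the proof of that lemma shows the estimate is obtained at $\rho=\lambda$ and monotonicity in $\rho$ (inherited from the concavity of $\theta^{(2)}_\rho$ used inside \cref{lemgap}) is what makes $c_g\geq \frac{\widehat{\lambda}-\rho_0}{\widehat{\lambda}}c_sc_t$ actually hold throughout $(\lambda,\rho_0)$, as needed by \cref{main1}. There is no real obstacle here: the work has already been done, and the corollary is essentially a substitution. The only mildly delicate point, which I would spell out explicitly for the reader, is that $c_s\in(0,1]$ is well-defined precisely because $\theta_\lambda^{(2)}\leq\theta_\lambda^{(3)}$ are both positive under the standing assumption $\widehat{\lambda}<\alpha$, so that no division by zero occurs in the stated bound.
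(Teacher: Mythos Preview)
Your proposal is correct and follows exactly the approach the paper intends: the corollary is stated without proof precisely because it is the direct combination of \cref{main1} with the lower bound on $c_g$ from \cref{lemgapmulti}, and your write-up carries out that substitution carefully, including the check that $\rho_0\in(\lambda,\alpha)$ so that \cref{main1} applies.
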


    \section{Finite element method for the symmetric elliptic eigenvalue problem}
    \label{femsec}
    In this section, we  focus on the smallest eigenvalue problem of the symmetric elliptic operator. The problem is discretized by finite element method and solved by the Newton-Schur method, the space $V_{I}$, $V_{B,\rho}$ and $V_{\Gamma}$ are constructed by non-overlapping domain decomposition method.
    \subsection{Elliptic eigenvalue problem and finite element method}
    Let $\Omega\subset\mathbb{R}^{d}$, where $d=2$ or $3$ be a bounded convex polygonal domain, the smallest eigenvalue problem of the symmetric elliptic operator is to find the smallest $\lambda\in\mathbb{R}$ and sufficiently smooth $v_{\lambda}$ such that
    \begin{equation}
        \label{lap}
        \begin{aligned}
            Av_{\lambda}&=\lambda v_{\lambda}\quad\text{in }\Omega,\\
            v_{\lambda}&=0     \qquad\text{on }\partial\Omega,
        \end{aligned}
    \end{equation}
    where
    \begin{equation*}
        Av\defi -\sum_{i,j=1}^{d}\frac{\partial}{\partial x_{i}}\Big(a_{ij}(x)\frac{\partial v(x)}{\partial x_{j}}\Big)
    \end{equation*}
    and $\int_{\Omega}v_{\lambda}^{2}\de x=1$. Assume the matrix $\{a_{ij}(x)\}_{i,\,j=1}^{d}$ is symmetric and uniformly positive definite  and $a_{ij}(x)\in C^{0,1}(\overline{\Omega})$ for $i,j=1,\dotsc,d$. Let $W=L^{2}(\Omega)$, $(\cdot,\cdot)$ be $L^{2}$ inner product on $\Omega$ and $V$ be the Sobolev space $H_{0}^{1}(\Omega)$, then the variational form of \cref{lap} is
    \begin{equation}
        \label{lapvar}
        a(v_{\lambda},v)\defi\sum_{i,j=1}^{d}\int_{\Omega}a_{ij}\frac{\partial v_{\lambda}}{\partial x_{i}}\frac{\partial v}{\partial x_{j}}\de x=\lambda\,(v_{\lambda},v)\defi\lambda\int_{\Omega}v_{\lambda} v\de x\quad \forall\, v\in V.
    \end{equation}
    Let $\lambda$ be the smallest eigenvalue of \cref{lapvar}, it is well-known that $\lambda$ is simple (see Theorem~2 in Section~6.5 of \cite{Evans2010}). Moreover, we assume that $a(\cdot,\cdot)$ is equivalent to the square of the $H_{0}^{1}$ norm, \ie
    \begin{equation}
        \label{eqh1norm}
        \abs{v}_{a}\defi \bigl(a(v,v)\bigr)^{1/2}\approx \Bigl(\int_{\Omega}\abs{\nabla v}^{2}\de x\Bigr)^{1/2}.
    \end{equation}
    We construct continuous and piecewise linear element spaces $V^{H}\subset V^{h}\subset V$ based on quasi-uniform triangular partitions $\mathcal{T}^{H}$ and $\mathcal{T}^{h}$, where $0<h<H<1$ are mesh sizes of $\mathcal{T}^{H}$ and $\mathcal{T}^{h}$ respectively and $\mathcal{T}^{h}$ is refined by $\mathcal{T}^{H}$. By using the finite element discretization, the variational form of \cref{lapvar} becomes
    \begin{equation}
        \label{lapvarh}
        a(v_{\lambda}^{h},v^{h})=\lambda^{h}\,(v_{\lambda}^{h},v^{h})\quad\forall\, v^{h}\in V^{h},
    \end{equation}
    where $v_{\lambda}^{h}\in V^{h}$. Then the discrete elliptic operator $A^{h}$ is defined as
    \begin{equation*}
        (A^{h}v_{1}^{h},v_{2}^{h})\defi a(v_{1}^{h},v_{2}^{h})\quad\forall\, v_{1}^{h},\,v_{2}^{h}\in V^{h}.
    \end{equation*}
    The convergence of the discrete eigenvalues  can be found in \cite{Babuska1987}.
    \begin{proposition}
        \label{lapeigest}
        Suppose $A^{h}$ and $A^{H}$ are discrete elliptic operators with mesh sizes $h$ and $H$ respectively, then, following properties hold.
        \begin{equation}
                \lambda^{h}\!-\lambda\approx h^{2},\quad
                \widehat{\lambda}^{h}\!-\widehat{\lambda}\approx h^{2}\quad\text{and}\quad\lambda^{H}-\lambda\approx H^{2},
        \end{equation}
        where $\lambda$, $\lambda^{h}$ and $\lambda^{H}$ are the smallest eigenvalues of $A$, $A^{h}$ and $A^{H}$, $\widehat{\lambda}$ and $\widehat{\lambda}^{h}$ are the second smallest eigenvalues of $A$ and $A^{h}$ respectively.
    \end{proposition}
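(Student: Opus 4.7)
The plan is to invoke the classical Babu\v{s}ka-Osborn theory of finite element spectral approximation, applied to the symmetric elliptic eigenvalue problem \cref{lapvarh}. Three ingredients will drive the proof: $H^{2}$ regularity of the continuous eigenfunctions, sharp energy- and $L^{2}$-norm estimates for the Ritz projection, and an eigenvalue error representation that ties the two together.

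First, I would record that under the standing hypotheses -- $\Omega$ convex polygonal and $a_{ij}\in C^{0,1}(\overline{\Omega})$ -- the source problem for $A$ enjoys full $H^{2}$-regularity, and hence every eigenfunction $v_{\lambda}$ and $\widehat{v}_{\lambda}$ belongs to $H^{2}(\Omega)\cap H_{0}^{1}(\Omega)$ with $H^{2}$-seminorm bounded by a constant depending only on $\Omega$, the coefficients and the eigenvalue. In particular $\abs{v_{\lambda}}_{H^{2}}>0$, since $Av_{\lambda}=\lambda v_{\lambda}\not\equiv 0$, and analogously for $\widehat{v}_{\lambda}$.

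Second, for the upper bounds I would introduce the Ritz projection $P^{h}\colon V\to V^{h}$ defined by $a(v-P^{h}v,w^{h})=0$ for all $w^{h}\in V^{h}$. C\'ea's lemma combined with nodal interpolation on the quasi-uniform mesh $\mathcal{T}^{h}$ yields $\abs{v_{\lambda}-P^{h}v_{\lambda}}_{a}\lesssim h\,\abs{v_{\lambda}}_{H^{2}}$, and an Aubin-Nitsche duality argument gives $\norm{v_{\lambda}-P^{h}v_{\lambda}}\lesssim h^{2}\,\abs{v_{\lambda}}_{H^{2}}$. Inserting $P^{h}v_{\lambda}$ as the trial function in the Rayleigh quotient and invoking the min-max characterization of the smallest discrete eigenvalue produces $\lambda^{h}-\lambda\lesssim h^{2}$. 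Applying the two-dimensional min-max principle to the invariant subspace $\mathrm{span}\{v_{\lambda},\widehat{v}_{\lambda}\}$, and using the regularity of both eigenfunctions, yields $\widehat{\lambda}^{h}-\widehat{\lambda}\lesssim h^{2}$. The same reasoning with $h$ replaced by $H$ gives $\lambda^{H}-\lambda\lesssim H^{2}$.

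Finally, the matching lower bounds are the main obstacle. I would combine the Babu\v{s}ka-Osborn identity
\begin{equation*}
    \lambda^{h}-\lambda=a(v_{\lambda}-v_{\lambda}^{h},v_{\lambda}-v_{\lambda}^{h})-\lambda\,(v_{\lambda}-v_{\lambda}^{h},v_{\lambda}-v_{\lambda}^{h}),
\end{equation*}
valid after aligning the $L^{2}$-normalized eigenfunctions with the correct sign, with the saturation property for continuous piecewise linear elements: because $v_{\lambda}\in H^{2}$ is not piecewise linear on $\mathcal{T}^{h}$ (equivalently $\abs{v_{\lambda}}_{H^{2}}>0$), the best energy-norm approximation obeys $\inf_{w^{h}\in V^{h}}\abs{v_{\lambda}-w^{h}}_{a}\gtrsim h$ asymptotically, with constant controlled by $\abs{v_{\lambda}}_{H^{2}}$. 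Since $\norm{v_{\lambda}-v_{\lambda}^{h}}^{2}=O(h^{4})$ is a strictly lower-order contribution, the identity yields $\lambda^{h}-\lambda\gtrsim h^{2}$; the same procedure applied to $\widehat{v}_{\lambda}$ and to the pair $(H,\lambda^{H})$ finishes the proof. The principal technical hurdle is establishing the quantitative saturation bound in a form that is uniform in $h$, which is the reason this type of two-sided estimate is typically cited directly from \cite{Babuska1987} rather than reproved in the present setting.
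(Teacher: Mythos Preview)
The paper does not supply its own proof of this proposition; it is stated as a known result and attributed directly to Babu\v{s}ka and Osborn \cite{Babuska1987}. Your proposal is precisely an outline of that classical spectral-approximation theory (regularity of eigenfunctions, Ritz projection and Aubin--Nitsche estimates, min--max for the upper bounds, and the eigenvalue error identity plus saturation for the lower bounds), so your approach and the paper's are the same in spirit: both defer to \cite{Babuska1987}. One minor remark: in the paper's subsequent analysis only the upper bounds $\lambda^{h}-\lambda\lesssim h^{2}$, $\widehat{\lambda}^{h}-\widehat{\lambda}\lesssim h^{2}$, $\lambda^{H}-\lambda\lesssim H^{2}$ are ever invoked (see \cref{gaphH} and the proof of \cref{mainthm}), so the saturation lower bound you flag as the main technical hurdle is not actually needed for the rest of the paper.
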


    From \cref{lapeigest} and the variational principle of eigenvalues (see Equation~2.1 in Section~3 of \cite{Weinberger1974}), the gap between $\lambda^{h}$ and $\lambda^{H}$ can be bounded by
    \begin{equation}
        \label{gaphH}
        0\leq\lambda^{H}-\lambda^{h}\lesssim H^{2}.
    \end{equation}
    In the rest of this paper, we take the initial point $\rho_{0}=\lambda^{H}$, thus $0\leq\rho_{0}-\lambda^{h}\lesssim H^{2}$.

    \subsection{Non-overlapping domain decomposition methods}
    Suppose $\Omega$ is divided into $N$ non-overlapping convex polygonal subdomains $\{\Omega_{k}\}_{k=1}^{N}$ with diameter no more than $H$ and the union of the boundaries are denoted by $\Gamma=\cup_{k=1}^{N}\partial\Omega_{k}$. Let $W_{\Gamma}=L^{2}(\Gamma)$, $(\cdot,\cdot)_{\Gamma}$ be an inner product on $\Gamma$, whose corresponding norm $\norm{\cdot}_{\Gamma}$ is spectral equivalent to $\norm{\cdot}_{L^{2}(\Gamma)}$, and $V_{\Gamma}=H_{*}^{1/2}(\Gamma)$, where $u\in H_{*}^{1/2}(\Gamma)$ means that the restriction of $u$ on $\partial\Omega_{k}$ belongs to $H^{1/2}(\Omega_{k})$ for all $k=1,\dotsc,N$. Let
    \begin{equation*}
        \norm{u^{h}}_{H_{*}^{1/2}(\Gamma)}\defi \Bigl(\sum_{k=1}^{N}\norm{u^{h}}_{H^{1/2}(\partial\Omega_{k})}^{2}\Bigr)^{1/2},
    \end{equation*}
    where the scaled full-norm (see, for examples, \cite{Toselli2005,Xu1998}) in $\Omega_{k}$ is defined as
    \begin{equation*}
        \begin{aligned}
            \norm{u^{h}}_{H^{1}(\Omega_{k})}&\defi \bigl(\abs{u^{h}}_{H^{1}(\Omega_{k})}^{2}+H^{-2}\norm{u^{h}}_{L^{2}(\Omega_{k})}^{2}\bigr)^{1/2},\\
            \norm{u^{h}}_{H^{1/2}(\partial\Omega_{k})}&\defi \bigl(\abs{u^{h}}_{H^{1/2}(\partial\Omega_{k})}^{2}+H^{-1}\norm{u^{h}}_{L^{2}(\partial\Omega_{k})}^{2}\bigr)^{1/2}.
        \end{aligned}
    \end{equation*}
    Then we can define $V_{\Gamma}^{h}$ as the trace space of $V^{h}$ on $\Gamma$ and $V_{I}^{h}$ as the subspace of $V^{h}$ whose members vanish at $\Gamma$, \ie
    \begin{equation*}
        \begin{aligned}
            V_{\Gamma}^{h}&\defi \{u^{h}\in H_{*}^{1/2}(\Gamma)\mid \exists\, v^{h}\in V^{h},\,u^{h}=\Tr(v^{h})\},\\
            V_{I}^{h}& \defi \{v^{h}\in V^{h}\mid \Tr(v^{h})=0\},
        \end{aligned}
    \end{equation*}
    where $\Tr\colon H^{1}(\Omega)\mapsto H_{*}^{1/2}(\Gamma)$ is the trace map defined as $\Tr(v)=v$. In order to define the extension, a lemma for the coercivity of $a(\cdot,\cdot)$ in $V_{I}^{h}$ is needed.
    \begin{lemma}
        \label{lapcoer}
        Suppose $\alpha$ is the smallest eigenvalue of $a(\cdot,\cdot)$ in $V_{I}^{h}$, \ie
        \begin{equation*}
            \alpha = \min_{v^{h}\in V_{I}^{h}}\frac{a(v^{h},v^{h})}{(v^{h},v^{h})},
        \end{equation*}
        then we have $\alpha\gtrsim H^{-2}$.
    \end{lemma}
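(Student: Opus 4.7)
The plan is to exploit the fact that any $v^h \in V_I^h$ vanishes on $\Gamma = \cup_{k=1}^N \partial\Omega_k$, so its restriction to each subdomain $\Omega_k$ lies in $H_0^1(\Omega_k)$. This immediately reduces the problem to a subdomain-by-subdomain Poincar\'e-type estimate, which produces the factor $H^{-2}$ from the fact that each $\Omega_k$ has diameter at most $H$.

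First I would decompose the norms by subdomain. Since the members of $V_I^h$ vanish on $\Gamma$, one has the direct-sum splitting $V_I^h = \bigoplus_{k=1}^N V_I^h|_{\Omega_k}$, and both $\norm{v^h}^2 = \sum_k \norm{v^h}_{L^2(\Omega_k)}^2$ and $\abs{v^h}_{H^1(\Omega)}^2 = \sum_k \abs{v^h}_{H^1(\Omega_k)}^2$ hold. Next I would invoke the standard Poincar\'e inequality on each convex subdomain $\Omega_k$: for $w \in H_0^1(\Omega_k)$ with $\mathrm{diam}(\Omega_k) \lesssim H$,
\begin{equation*}
    \norm{w}_{L^2(\Omega_k)}^2 \lesssim H^2\, \abs{w}_{H^1(\Omega_k)}^2,
\end{equation*}
with a constant depending only on the shape regularity of $\Omega_k$ (which is uniform in $k$ because the coarse partition is quasi-uniform). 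Summing over $k$ gives $\norm{v^h}^2 \lesssim H^2 \abs{v^h}_{H^1(\Omega)}^2$ for all $v^h \in V_I^h$.

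Finally, using the equivalence \cref{eqh1norm} between $\abs{\cdot}_a$ and the $H_0^1$ seminorm, I obtain
\begin{equation*}
    a(v^h,v^h) \approx \abs{v^h}_{H^1(\Omega)}^2 \gtrsim H^{-2}\, \norm{v^h}^2 \qquad \forall\, v^h \in V_I^h,
\end{equation*}
which, after taking the infimum over $v^h$, yields $\alpha \gtrsim H^{-2}$.

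I do not expect a serious obstacle here; the only point requiring mild care is to ensure that the Poincar\'e constant on each $\Omega_k$ can be chosen independent of $k$, but this follows from the shape regularity of the coarse partition together with the convexity assumption on the subdomains, so the hidden constants in $\lesssim$ remain independent of $h$, $H$, and the choice of $v^h$.
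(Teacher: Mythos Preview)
Your proposal is correct and follows essentially the same route as the paper: decompose $v^{h}\in V_{I}^{h}$ subdomain by subdomain (using that $v^{h}$ vanishes on $\Gamma$), apply the Poincar\'e inequality on each $\Omega_{k}$ with constant $\mathcal{O}(H^{2})$, and then sum. The paper phrases the combination as a ratio-of-sums versus minimum-of-ratios inequality for the Rayleigh quotient, while you sum the Poincar\'e bounds directly and invoke \cref{eqh1norm}; these are cosmetically different but the underlying argument is identical.
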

    \begin{proof}
        For every $v^{h}\in V_{I}^{h}$, it can be decomposed as $v^{h}=\sum_{k=1}^{N}v_{k}^{h}$, where $\supp(v_{k})\subset \overline{\Omega}_{k}$. Since $a(v_{i}^{h},v_{j}^{h})=(v_{i}^{h},v_{j}^{h})=0$ for all $i\neq j$, we know
        \begin{equation*}
            \frac{a(v^{h},v^{h})}{(v^{h},v^{h})}=\frac{\sum_{k=1}^{N}a(v_{k}^{h},v_{k}^{h})}{\sum_{k=1}^{N}(v_{k}^{h},v_{k}^{h})}\geq \min_{1\leq k\leq N}\frac{a(v_{k}^{h},v_{k}^{h})}{(v_{k}^{h},v_{k}^{h})}\geq \min_{1\leq k\leq N}\lambda_{k}^{h},
        \end{equation*}
        where $\lambda_{k}^{h}$ refers to the smallest eigenvalue of $a(\cdot,\cdot)$ restricted on $\Omega_{k}$. Since the diameter of $\Omega_{k}$ is no more than $H$, then $\alpha\gtrsim H^{-2}$ holds due to the Poincar\'e inequality.
    \end{proof}

    For all $\rho<\alpha$, denote $a_{\rho}(\cdot,\cdot)\defi a(\cdot,\cdot)-\rho\,(\cdot,\cdot)$, the $a_{\rho}$\nobreakdash-orthogonal space of $V_{I}^{h}$ can be defined as
    \begin{equation*}
        V_{B,\rho}^{h}\defi\{v^{h}\in V^{h}\mid a_{\rho}(v^{h},v_{I}^{h})=0,\, \forall\, v_{I}^{h}\in V_{I}^{h}\}.
    \end{equation*}
    Then the discrete $a_{\rho}$\nobreakdash-harmonic extension $\HE_{\rho}^{h}\colon V_{\Gamma}^{h}\mapsto V_{B,\rho}^{h}$ can be defined. For all $u^{h}\in V_{\Gamma}^{h}$, let $\HE_{\rho}^{h}u^{h}$ be the solution of
    \begin{equation}
        \label{defHhuh}
        \begin{aligned}
            a_{\rho}(\HE_{\rho}^{h}u^{h},v_{I}^{h})&=0\quad\,\,\, \forall\, v_{I}^{h}\in V_{I}^{h},\\
            \HE_{\rho}^{h}u^{h}&=u^{h}\quad\text{on }\Gamma.
        \end{aligned}
    \end{equation}
    Here are some propositions about the extension $\HE_{\rho}^{h}$ (see \cite{Brenner1999}).
    \begin{proposition}
        \label{bij}
        The extension $\HE_{\rho}^{h}$ is bijective since the functions in $V_{B,\rho}^{h}$ are completely determined by their values on $\Gamma$.
    \end{proposition}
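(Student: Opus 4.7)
The plan is to establish bijectivity in the usual two-step fashion, reducing everything to the fact that the interior problem governing $\HE_\rho^h$ has a unique solution once the trace on $\Gamma$ is fixed. The key enabling observation is that, by \cref{lapcoer} and the assumption $\rho<\alpha$, the shifted form $a_\rho(\cdot,\cdot)=a(\cdot,\cdot)-\rho(\cdot,\cdot)$ is positive definite (in fact coercive) on $V_I^h$, so \cref{defhhuh} is a well-posed linear system on the finite-dimensional space $V_I^h$.

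First I would verify well-posedness of the defining problem \cref{defHhuh}. Fix $u^h\in V_\Gamma^h$ and choose any $w^h\in V^h$ with $w^h|_\Gamma=u^h$ (for instance the nodal extension by zero at interior nodes). Writing $\HE_\rho^h u^h=w^h+z^h$ with $z^h\in V_I^h$, \cref{defHhuh} becomes $a_\rho(z^h,v_I^h)=-a_\rho(w^h,v_I^h)$ for all $v_I^h\in V_I^h$. Since $a_\rho$ is positive definite on $V_I^h$, the Lax--Milgram lemma (or simply the invertibility of a positive definite matrix) gives a unique such $z^h$, so $\HE_\rho^h u^h$ exists and is unique, and by construction lies in $V_{B,\rho}^h$.

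Injectivity is then immediate from the imposed boundary condition: if $\HE_\rho^h u_1^h=\HE_\rho^h u_2^h$, taking traces on $\Gamma$ gives $u_1^h=u_2^h$. For surjectivity, take any $w^h\in V_{B,\rho}^h$ and set $u^h\defi w^h|_\Gamma\in V_\Gamma^h$. Both $w^h$ and $\HE_\rho^h u^h$ lie in $V_{B,\rho}^h$ and share the same trace $u^h$ on $\Gamma$, so their difference lies in $V_I^h$ and is $a_\rho$-orthogonal to every element of $V_I^h$; testing against itself and using coercivity forces the difference to vanish, whence $\HE_\rho^h u^h=w^h$.

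There is essentially no hard step here: the only thing to watch is that $a_\rho$ really is positive definite on $V_I^h$, which is exactly the content of \cref{lapcoer} together with the standing hypothesis $\rho<\alpha$; everything else is a direct unpacking of the definitions of $V_\Gamma^h$, $V_I^h$, and $V_{B,\rho}^h$, confirming the assertion in the proposition that elements of $V_{B,\rho}^h$ are completely determined by their boundary traces.
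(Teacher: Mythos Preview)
Your argument is correct. The paper itself does not prove this proposition---it merely states it with a reference to \cite{Brenner1999}---and what you have written is exactly the standard argument: coercivity of $a_\rho$ on $V_I^h$ (from \cref{lapcoer} together with the standing hypothesis $\rho<\alpha$) yields unique solvability of the interior problem \cref{defHhuh}, after which injectivity follows from the trace constraint and surjectivity from the observation that any $w^h\in V_{B,\rho}^h$ must coincide with the extension of its own trace by the uniqueness just established.
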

    \begin{proposition}
        \label{minrq}
        For any $u^{h}\in V^{h}_{\Gamma}$ and $v^{h}\in V^{h}$, if $u^{h}$ is the trace of $v^{h}$ on $\Gamma$, \ie $\Tr(v^{h})=u^{h}$, then $a_{\rho}(\HE_{\rho}^{h}u^{h},\HE_{\rho}^{h}u^{h})\leq a_{\rho}(v^{h},v^{h})$ holds for all $\rho<\alpha$.
    \end{proposition}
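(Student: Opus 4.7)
The plan is to prove this as a standard minimum (Dirichlet) principle for the discrete $a_\rho$\nobreakdash-harmonic extension, using the $a_\rho$\nobreakdash-orthogonal decomposition $V^h = V_I^h \oplus V_{B,\rho}^h$ guaranteed by \cref{decompositionV} together with the coercivity of $a_\rho$ on $V_I^h$ (which holds because $\rho<\alpha$ and $\alpha$ is precisely the smallest Rayleigh quotient on $V_I^h$ by \cref{lapcoer}).

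First, given $v^h \in V^h$ with $\Tr(v^h)=u^h$, I would form the difference $w^h \defi v^h - \HE_\rho^h u^h$. Since the extension $\HE_\rho^h u^h$ also has trace $u^h$ on $\Gamma$ by \cref{defHhuh}, the function $w^h$ has vanishing trace on $\Gamma$, hence $w^h \in V_I^h$. This gives the decomposition $v^h = \HE_\rho^h u^h + w^h$ with $\HE_\rho^h u^h \in V_{B,\rho}^h$ and $w^h \in V_I^h$.

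Next I would expand $a_\rho(v^h,v^h)$ bilinearly and exploit the defining $a_\rho$\nobreakdash-orthogonality of $\HE_\rho^h u^h$ to $V_I^h$ (the first line of \cref{defHhuh}): the cross term $a_\rho(\HE_\rho^h u^h, w^h)$ vanishes, leaving
\begin{equation*}
    a_\rho(v^h,v^h) = a_\rho(\HE_\rho^h u^h,\HE_\rho^h u^h) + a_\rho(w^h,w^h).
\end{equation*}
Finally, because $\rho<\alpha$ and $a_\rho$ is coercive on $V_I^h$ with constant $\alpha-\rho>0$ by \cref{coercive} and the definition \cref{defvarrho}, the remainder satisfies $a_\rho(w^h,w^h) \geq (\alpha-\rho)\,\norm{w^h}^2 \geq 0$, which yields the claimed inequality.

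There is no real obstacle here; the only subtle point is making sure the coercivity of $a_\rho$ on $V_I^h$ is invoked with the correct constant, which is why we explicitly need $\rho<\alpha$ as an assumption rather than something weaker. The equality case (saturation) occurs exactly when $w^h=0$, i.e.\ when $v^h$ itself already lies in $V_{B,\rho}^h$, so the extension $\HE_\rho^h u^h$ is characterized as the unique minimizer of $a_\rho(\cdot,\cdot)$ among all $V^h$\nobreakdash-functions with prescribed trace $u^h$ on $\Gamma$.
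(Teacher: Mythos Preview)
Your argument is correct and is precisely the standard Dirichlet-principle proof of this minimization property. Note that the paper does not actually give a proof of this proposition; it is stated without proof alongside Propositions~\ref{bij} and~\ref{minevalS} with a reference to \cite{Brenner1999}, so there is nothing to compare beyond observing that your write-up is the expected argument.
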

    \begin{proposition}
        \label{minevalS}
        For any $u^{h}\in V^{h}_{\Gamma}$ and $\lambda^{h}\leq\rho\leq\rho_{0}$, $H\norm{u^{h}}_{\Gamma}^{2}\lesssim\abs{\HE_{\rho}^{h}u^{h}}_{a}^{2}$.
    \end{proposition}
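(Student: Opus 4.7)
The plan is to reduce everything to standard scaled trace and Poincaré inequalities. Since $\norm{\cdot}_{\Gamma}$ is assumed spectrally equivalent to the $L^{2}(\Gamma)$ norm and $\abs{\cdot}_{a}$ is equivalent to the $H^{1}_{0}$-seminorm on $V$ (by \cref{eqh1norm} plus Poincaré on $\Omega$), the target inequality is equivalent to showing
\begin{equation*}
    H\,\norm{u^{h}}_{L^{2}(\Gamma)}^{2}\lesssim \abs{\HE_{\rho}^{h}u^{h}}_{H^{1}(\Omega)}^{2}.
\end{equation*}
The role of $\rho$ in the statement is only to guarantee that $\HE_{\rho}^{h}u^{h}$ is well-defined (which is handled by $\rho\leq\rho_{0}<\alpha$, using \cref{lapcoer}); the numerical value of $\rho$ will not enter the estimate.

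First I would apply the scaled trace inequality on each convex subdomain $\Omega_{k}$ of diameter at most $H$: for any $v\in H^{1}(\Omega_{k})$,
\begin{equation*}
    \norm{v}_{L^{2}(\partial\Omega_{k})}^{2}\lesssim H\,\abs{v}_{H^{1}(\Omega_{k})}^{2}+H^{-1}\norm{v}_{L^{2}(\Omega_{k})}^{2},
\end{equation*}
which follows from the unit-diameter trace inequality by a dilation argument, using shape-regularity of the decomposition so the hidden constant is uniform in $k$. Taking $v=\HE_{\rho}^{h}u^{h}$ on $\Omega_{k}$ and using that its trace on $\partial\Omega_{k}$ equals $u^{h}$, then summing over $k=1,\dots,N$ and multiplying by $H$, I obtain
\begin{equation*}
    H\,\norm{u^{h}}_{L^{2}(\Gamma)}^{2}\lesssim H^{2}\,\abs{\HE_{\rho}^{h}u^{h}}_{H^{1}(\Omega)}^{2}+\norm{\HE_{\rho}^{h}u^{h}}_{L^{2}(\Omega)}^{2}.
\end{equation*}

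Next I would absorb the $L^{2}(\Omega)$ term. Since $\HE_{\rho}^{h}u^{h}\in V_{B,\rho}^{h}\subset V^{h}\subset H_{0}^{1}(\Omega)$, the global Poincaré–Friedrichs inequality on the bounded domain $\Omega$ gives $\norm{\HE_{\rho}^{h}u^{h}}_{L^{2}(\Omega)}^{2}\lesssim \abs{\HE_{\rho}^{h}u^{h}}_{H^{1}(\Omega)}^{2}$ with a constant depending only on $\Omega$ (in particular independent of $h$, $H$, and $\rho$). Combining this with $H<1$ collapses both terms on the right to a multiple of $\abs{\HE_{\rho}^{h}u^{h}}_{H^{1}(\Omega)}^{2}$, and the equivalence $\abs{\cdot}_{a}\approx\abs{\cdot}_{H^{1}(\Omega)}$ finishes the proof.

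The proof is short and the only genuine point to watch is the scaling bookkeeping in the trace inequality: one must make sure the constant is uniform across all $\Omega_{k}$ (which is fine under the quasi-uniformity/shape-regularity assumption on the subdomain decomposition) and that the powers of $H$ combine correctly so that the $L^{2}(\Omega)$ residual can indeed be killed using Poincaré. The lack of any role for $\rho$ in the estimate, despite its appearance in the hypothesis, is by design — the $\rho$-harmonic character of the extension is not needed; only the matching boundary values and the ambient zero Dirichlet data on $\partial\Omega$ matter.
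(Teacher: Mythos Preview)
Your argument is correct. The paper does not actually prove this proposition; it lists it together with \cref{bij,minrq} as facts about the discrete $a_{\rho}$-harmonic extension and simply cites Brenner~\cite{Brenner1999}. Your route --- the scaled trace inequality $\norm{v}_{L^{2}(\partial\Omega_{k})}^{2}\lesssim H\abs{v}_{H^{1}(\Omega_{k})}^{2}+H^{-1}\norm{v}_{L^{2}(\Omega_{k})}^{2}$ summed over subdomains, followed by the global Poincar\'e--Friedrichs inequality on $H_{0}^{1}(\Omega)$ --- is precisely the standard one behind such references, and your observation that the $a_{\rho}$-harmonic structure plays no role (only the trace matching and the zero Dirichlet data on $\partial\Omega$ matter) is right. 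The only cosmetic point is the double counting of interior faces when you sum $\norm{u^{h}}_{L^{2}(\partial\Omega_{k})}^{2}$ over $k$, which affects the constant by at most a factor of two and is harmless.
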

    \begin{lemma}
        \label{refdomain}
        Suppose $D\subset \mathbb{R}^{d}$, where $d=2$ or $3$, is a convex polygonal domain with unit diameter. Let
            \begin{equation*}
                \mathcal{A}(v_{1},v_{2}) = \sum_{i,j=1}^{d}\int_{D}a_{ij}(x)\frac{\partial v_{1}}{\partial x_{i}}\frac{\partial v_{2}}{\partial x_{j}}-c(x)\,v_{1}v_{2}\,\de x
            \end{equation*}
        be a symmetric positive definite bilinear form on $H^{1}(D)$ satisfying
        \begin{equation}
            \label{h1eqtmp}
            \mathcal{A}(v,v)\approx \norm{v}_{H_{1}(D)}^{2},
        \end{equation}
        where $a_{ij}(x)$ and $c(x)\in C^{0,1}(\overline{D})$ for $i,j=1,\dotsc,d$. For all $u\in H^{1/2}(\partial D)$, let $\HE u$ be the solution of
        \begin{equation}
            \label{defHutmp}
            \begin{aligned}
                \mathcal{A}(\HE u,v) &= 0\quad\forall\, v\in H_{0}^{1}(D),\\
                \HE u&=u\quad\text{on }\partial D,
            \end{aligned}
        \end{equation}
        then the following estimation holds:
        \begin{equation*}
            \norm{\HE u}_{L^{2}(D)}\lesssim\norm{u}_{H^{-1/2}(\partial D)}.
        \end{equation*}
    \end{lemma}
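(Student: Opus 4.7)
The plan is a standard Aubin--Nitsche style duality argument. By $L^{2}$ duality,
$\norm{\HE u}_{L^{2}(D)}=\sup_{0\neq f\in L^{2}(D)}(f,\HE u)/\norm{f}_{L^{2}(D)}$,
so it suffices, for every $f\in L^{2}(D)$, to estimate $(f,\HE u)$ by $\norm{f}_{L^{2}(D)}\,\norm{u}_{H^{-1/2}(\partial D)}$ up to an absolute constant. The goal is therefore to route the pairing $(f,\HE u)$ through an adjoint problem whose solution is smooth enough to carry a boundary duality bound.

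For each such $f$ I would introduce the auxiliary Dirichlet problem: find $w\in H_{0}^{1}(D)$ with $\mathcal{A}(w,v)=(f,v)_{L^{2}(D)}$ for all $v\in H_{0}^{1}(D)$. Coercivity of $\mathcal{A}$ on $H^{1}(D)$ via \cref{h1eqtmp} gives a unique $w$, and because $D$ is convex polygonal with $a_{ij},c\in C^{0,1}(\overline{D})$, standard elliptic regularity for divergence-form operators on convex polygons yields $w\in H^{2}(D)$ with $\norm{w}_{H^{2}(D)}\lesssim\norm{f}_{L^{2}(D)}$. Green's formula applied to the pair $(w,\HE u)\in H^{2}(D)\times H^{1}(D)$ then gives
$(f,\HE u)=\mathcal{A}(w,\HE u)-\dual{A\nabla w\cdot n,\,\HE u}_{\partial D}$,
where $A=(a_{ij})$ and $n$ is the outer unit normal on $\partial D$. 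The volume term vanishes: by symmetry of $\mathcal{A}$ together with \cref{defHutmp}, $\mathcal{A}(w,\HE u)=\mathcal{A}(\HE u,w)=0$, since $w\in H_{0}^{1}(D)$ is an admissible test function for the $\mathcal{A}$-harmonicity of $\HE u$.

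What is left is a pure boundary estimate. Using $\HE u|_{\partial D}=u$ and the duality pairing between $H^{1/2}(\partial D)$ and $H^{-1/2}(\partial D)$,
$\abs{(f,\HE u)}\leq\norm{A\nabla w\cdot n}_{H^{1/2}(\partial D)}\,\norm{u}_{H^{-1/2}(\partial D)}$,
while the trace theorem combined with the Lipschitz regularity of $A$ gives $\norm{A\nabla w\cdot n}_{H^{1/2}(\partial D)}\lesssim\norm{A\nabla w}_{H^{1}(D)}\lesssim\norm{w}_{H^{2}(D)}\lesssim\norm{f}_{L^{2}(D)}$. Taking the supremum in $f$ then yields the claim.

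The main obstacle is the $H^{2}$-regularity of the auxiliary problem: it is exactly here that the convexity of $D$ is essential, since at a re-entrant corner the adjoint solution would carry singular components that would destroy both $w\in H^{2}(D)$ and the subsequent $H^{1/2}$ control of $A\nabla w\cdot n$ on $\partial D$. Lipschitz regularity of the coefficients is likewise needed to pass from $w\in H^{2}(D)$ to $A\nabla w\in H^{1}(D)$; once both facts are cited from standard references, the remainder of the proof is a routine application of Green's identity, the trace theorem, and the $H^{1/2}$--$H^{-1/2}$ duality on $\partial D$.
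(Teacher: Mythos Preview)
Your argument is correct and follows essentially the same route as the paper: both proofs use $L^{2}$ duality, solve the adjoint Dirichlet problem for $w$ (the paper calls it $\psi$), invoke $H^{2}$ regularity on the convex polygon, apply Green's formula so that only the conormal boundary term survives (since $\mathcal{A}(\HE u,w)=0$ for $w\in H_{0}^{1}(D)$), and then bound this term by $\norm{u}_{H^{-1/2}(\partial D)}\norm{w}_{H^{2}(D)}$ via the trace theorem. The only cosmetic difference is that the paper writes the defining equation for $\psi$ already with the boundary term included, whereas you state the variational problem on $H_{0}^{1}(D)$ and recover the boundary term afterwards via integration by parts; these are equivalent formulations.
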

    \begin{proof}
        For any $v\in L^{2}(D)$, let $\psi\in H_{0}^{1}(D)$ be the solution of
        \begin{equation*}
            \mathcal{A}(w,\psi)-\dual{\frac{\partial\psi}{\partial\nu},w}_{\partial D}=(v,w)_{L^{2}(D)}\quad \forall\, w\in H^{1}(D),
        \end{equation*}
        where
        \begin{equation*}
            \frac{\partial\psi}{\partial\nu}=\sum_{i,j=1}^{d}a_{ij}(x)\cos(\mathbf{n},x_{i})\,\frac{\partial\psi}{\partial x_{j}}
        \end{equation*}
        with $\mathbf{n}$ is the outer normal to the boundary $\partial D$ and
        \begin{equation*}
            \dual{\frac{\partial\psi}{\partial\nu}, w}_{\partial D}=\int_{\partial D}\frac{\partial\psi}{\partial\nu}w\de s.
        \end{equation*}
        By using Aubin-Nitsche's trick, for all $u\in H^{1/2}(\partial D)$
        \begin{equation*}
            \norm{\HE u}_{L^{2}(D)}=\sup_{0\neq v\in L^{2}(D)}\frac{(\HE u,v)_{L^{2}(D)}}{\norm{v}_{L^{2}(D)}}=\sup_{0\neq v\in L^{2}(D)}\frac{-\dual{\HE u,\frac{\partial\psi}{\partial\nu}}_{\partial D}+\mathcal{A}(\HE u,\psi)}{\norm{v}_{L^{2}(D)}}.
        \end{equation*}
        Since $\psi\in H_{0}^{1}(D)$ and $\HE u$ is the solution of \cref{defHutmp}, $\mathcal{A}(\HE u,\psi)=0$, then
        \begin{equation}
            \label{an1}
            \norm{\HE u}_{L^{2}(D)}=\sup_{0\neq v\in L^{2}(D)}\frac{-\dual{\HE u,\frac{\partial\psi}{\partial\nu}}_{\partial D}}{\norm{v}_{L^{2}(D)}}.
        \end{equation}
        By using the Cauchy-Schwarz inequality and trace theorem,
        \begin{equation}
            \label{an2}
            \Bigabs{\dual{\HE u,\frac{\partial\psi}{\partial\nu}}_{\partial D}}
            \leq \norm{u}_{H^{-1/2}(\partial D)}\Bignorm{\frac{\partial\psi}{\partial\nu}}_{H^{1/2}(\partial D)}\lesssim  \norm{u}_{H^{-1/2}(\partial D)}\norm{\psi}_{H^{2}(D)}.
        \end{equation}
        Due to the $H^{2}$ regularity (see Theorem~3.2.1.2 in \cite{Grisvard2011}),
        \begin{equation}
            \label{an3}
            \norm{\psi}_{H^{2}(D)}\lesssim \norm{v}_{L^{2}(D)}.
        \end{equation}
        By combining \cref{an1,an2,an3}, the proof is finished.
    \end{proof}
    \begin{corollary}
        If the diameter of domain $D$ is $H$, it can be verified
        that
        \begin{equation*}
            \norm{\HE u}_{L^{2}(D)}\lesssim \norm{u}_{H^{-1/2}(\partial D)}\lesssim H^{1/2}\norm{u}_{L^{2}(\partial D)}
        \end{equation*}
        by a scaling argument and the Sobolev inequality.
    \end{corollary}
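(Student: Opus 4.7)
The plan is to reduce to the unit-diameter setting of \cref{refdomain} by the change of variables $\hat{x}=x/H$, which maps $D$ onto a reference domain $\hat{D}$ of unit diameter. Writing $\hat{u}(\hat{x})=u(H\hat{x})$ and similarly for $\HE u$, I will (i) establish the scaling law for each norm appearing in the statement, (ii) check that the bilinear form transported to $\hat{D}$ still satisfies the coercivity hypothesis \eqref{h1eqtmp} with constants independent of $H$, so that \cref{refdomain} applies on $\hat{D}$, and (iii) scale back. The second inequality will then follow from the standard embedding $L^{2}(\partial\hat{D})\hookrightarrow H^{-1/2}(\partial\hat{D})$ on the reference domain.

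Using $\de x=H^{d}\de\hat{x}$ and $\de s=H^{d-1}\de\hat{s}$ I expect the following scaling identities. For the interior $L^{2}$ norm, $\norm{\HE u}_{L^{2}(D)}=H^{d/2}\norm{\widehat{\HE u}}_{L^{2}(\hat{D})}$; for the boundary $L^{2}$ norm, $\norm{u}_{L^{2}(\partial D)}=H^{(d-1)/2}\norm{\hat u}_{L^{2}(\partial\hat{D})}$. For the boundary $H^{1/2}$ norm, the paper's scaled full-norm definition (with the $H^{-1}$ weight) is essential: the Slobodeckij seminorm scales as $\abs{\phi}_{H^{1/2}(\partial D)}^{2}=H^{d-2}\abs{\hat\phi}_{H^{1/2}(\partial\hat{D})}^{2}$, and the weighted $L^{2}$ piece as $H^{-1}\norm{\phi}_{L^{2}(\partial D)}^{2}=H^{d-2}\norm{\hat\phi}_{L^{2}(\partial\hat{D})}^{2}$, so both components collapse to a single factor $H^{(d-2)/2}$. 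Combining this with the pairing scaling $\dual{u,\phi}_{\partial D}=H^{d-1}\dual{\hat u,\hat\phi}_{\partial\hat{D}}$ and taking suprema yields $\norm{u}_{H^{-1/2}(\partial D)}=H^{d/2}\norm{\hat u}_{H^{-1/2}(\partial\hat{D})}$.

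Next I would verify that the transported bilinear form is admissible for \cref{refdomain}. Dividing by $H^{d-2}$, the equation defining $\widehat{\HE u}$ on $\hat{D}$ takes the form $\int_{\hat{D}}\hat{a}_{ij}\partial_{\hat{x}_{i}}\hat{u}\,\partial_{\hat{x}_{j}}\hat{v}\,\de\hat{x}-H^{2}\hat c(\hat{x})\,\hat u\hat v\,\de\hat x=0$ for all $\hat v\in H_{0}^{1}(\hat{D})$. Since in the intended application $c(x)=\rho$ with $\rho\leq\rho_{0}=\lambda^{H}$ uniformly bounded and $H\leq 1$, the renormalised zero-order coefficient $H^{2}\hat c$ is uniformly bounded, so the equivalence \eqref{h1eqtmp} holds on $\hat{D}$ with constants independent of $H$. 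Applying \cref{refdomain} on $\hat{D}$ gives $\norm{\widehat{\HE u}}_{L^{2}(\hat{D})}\lesssim\norm{\hat u}_{H^{-1/2}(\partial\hat{D})}$, and multiplying by $H^{d/2}$ and substituting the scaling identities produces the first inequality $\norm{\HE u}_{L^{2}(D)}\lesssim\norm{u}_{H^{-1/2}(\partial D)}$ with no extra power of $H$.

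For the second inequality, on the unit-diameter reference boundary $\partial\hat{D}$ the embedding $\norm{\hat u}_{H^{-1/2}(\partial\hat{D})}\lesssim\norm{\hat u}_{L^{2}(\partial\hat{D})}$ is standard. Using the two scalings already derived,
\begin{equation*}
\norm{u}_{H^{-1/2}(\partial D)}=H^{d/2}\norm{\hat u}_{H^{-1/2}(\partial\hat{D})}\lesssim H^{d/2}\norm{\hat u}_{L^{2}(\partial\hat{D})}=H^{1/2}\norm{u}_{L^{2}(\partial D)},
\end{equation*}
which closes the chain. The only real subtlety I anticipate is the bookkeeping of negative-order Sobolev norms under scaling; the whole proof hinges on the seminorm and $L^{2}$ pieces of the weighted $H^{1/2}$ norm scaling with the same power of $H$, which is precisely why the definition uses the weight $H^{-1}$. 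Once that identity is in hand, the rest is routine substitution.
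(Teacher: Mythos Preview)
Your proposal is correct and follows exactly the route the paper indicates: a scaling argument to the unit-diameter reference domain together with the Sobolev embedding $L^{2}\hookrightarrow H^{-1/2}$ on that reference boundary. Your computation of the scaling exponents for $\norm{\cdot}_{L^{2}(D)}$, $\norm{\cdot}_{L^{2}(\partial D)}$, $\norm{\cdot}_{H^{1/2}(\partial D)}$ and (by duality) $\norm{\cdot}_{H^{-1/2}(\partial D)}$ is accurate, and the observation that the weighted definition of $\norm{\cdot}_{H^{1/2}(\partial D)}$ is precisely what makes both summands scale with the same power $H^{d-2}$ is the key point. One minor simplification: your step (ii) is not really needed at the level of the corollary itself, since if $\mathcal{A}(v,v)\approx\norm{v}_{H^{1}(D)}^{2}$ holds on $D$ with the scaled full-norm, then the transported form on $\hat{D}$ automatically satisfies \eqref{h1eqtmp} with the standard norm and the \emph{same} constants; your discussion of the size of $H^{2}\hat c$ is rather a justification that this hypothesis is met in the later application to $a_{\rho}$ on $\Omega_{k}$.
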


    The following lemma gives a finite element method version of \cref{refdomain}.
    \begin{lemma}
        Suppose $D$ and $\mathcal{A}$ are defined as \cref{refdomain}. Let $V_{D}^{h}\subset H^{1}(D)$ be the continuous and piecewise linear elements space based on the quasi-uniform triangular partition $\mathcal{T}_{D}$ with mesh size $h$ and $V_{\partial D}^{h}$ is the trace space of $V_{D}^{h}$ on $\partial D$. For all $u^{h}\in V_{\partial D}^{h}$, let $\HE^{h}u^{h}$ be the solution of
        \begin{equation}\
            \label{defHhutmp}
            \begin{aligned}
                \mathcal{A}(\HE^{h}u^{h},v^{h})&=0\,\,\,\quad\forall\, v^{h}\in H_{0}^{1}(D)\cap V_{D}^{h},\\
                \HE^{h}u^{h}&=u^{h}\quad\text{on }\partial D,
            \end{aligned}
        \end{equation}
        then the following estimation holds:
        \begin{equation*}
            \norm{\HE^{h}u^{h}}_{L^{2}(D)}\lesssim \norm{u^{h}}_{H^{-1/2}(\partial D)}.
        \end{equation*}
    \end{lemma}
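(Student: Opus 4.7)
The plan is to mimic the Aubin--Nitsche duality argument used in the preceding continuous lemma \cref{refdomain}, but applied to the discretization error $e:=\HE u^{h}-\HE^{h}u^{h}$ rather than to $\HE^{h}u^{h}$ itself. Because both $\HE u^{h}$ and $\HE^{h}u^{h}$ have the same trace $u^{h}$ on $\partial D$, the function $e$ lies in $H_{0}^{1}(D)$, and subtracting \cref{defHutmp} from \cref{defHhutmp} produces the Galerkin orthogonality $\mathcal{A}(e,v^{h})=0$ for all $v^{h}\in V_{D}^{h}\cap H_{0}^{1}(D)$. Since the continuous extension is already controlled by \cref{refdomain}, a triangle inequality reduces the task to showing $\norm{e}_{L^{2}(D)}\lesssim \norm{u^{h}}_{H^{-1/2}(\partial D)}$.

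Apply duality to $e$: for each $v\in L^{2}(D)$, let $\psi\in H_{0}^{1}(D)$ solve $\mathcal{A}(\psi,w)=(v,w)_{L^{2}(D)}$ for all $w\in H_{0}^{1}(D)$. By $H^{2}$-regularity for Dirichlet problems on convex polygonal domains, $\norm{\psi}_{H^{2}(D)}\lesssim \norm{v}_{L^{2}(D)}$, and standard approximation for a Scott--Zhang type quasi-interpolant $\psi^{h}\in V_{D}^{h}\cap H_{0}^{1}(D)$ gives $\abs{\psi-\psi^{h}}_{H^{1}(D)}\lesssim h\norm{v}_{L^{2}(D)}$. Using the symmetry of $\mathcal{A}$ together with Galerkin orthogonality,
\[
    (v,e)_{L^{2}(D)}=\mathcal{A}(e,\psi)=\mathcal{A}(e,\psi-\psi^{h})\lesssim \abs{e}_{H^{1}(D)}\,h\,\norm{v}_{L^{2}(D)},
\]
so dividing by $\norm{v}_{L^{2}(D)}$ and taking the supremum yields $\norm{e}_{L^{2}(D)}\lesssim h\,\abs{e}_{H^{1}(D)}$.

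Finally, bound $\abs{e}_{H^{1}(D)}$ by splitting it through a triangle inequality into the two extensions. The continuous piece satisfies $\abs{\HE u^{h}}_{H^{1}(D)}\lesssim \norm{u^{h}}_{H^{1/2}(\partial D)}$ by the trace theorem together with \cref{h1eqtmp}; the discrete piece admits the same bound because $\HE^{h}u^{h}$ is the $\mathcal{A}$-minimizer in $V_{D}^{h}$ subject to the trace condition $u^{h}$, and can therefore be compared to a boundary-preserving interpolant of $\HE u^{h}$. A finite element inverse inequality on the trace space, $\norm{u^{h}}_{H^{1/2}(\partial D)}\lesssim h^{-1}\norm{u^{h}}_{H^{-1/2}(\partial D)}$, cancels the factor of $h$ produced by duality and delivers the claimed estimate. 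The principal delicacy is the boundary-preserving interpolation used in the discrete stability bound: it must send $\HE u^{h}\in H^{1}(D)$, whose trace is already in $V_{D}^{h}|_{\partial D}$, to a discrete function with the same trace while remaining $H^{1}$-stable, which is precisely the defining property of the Scott--Zhang interpolant and reduces this step to a standard citation.
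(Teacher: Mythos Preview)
Your proposal is correct and follows essentially the same route as the paper: reduce to bounding $e=\HE u^{h}-\HE^{h}u^{h}$ in $L^{2}$, use an Aubin--Nitsche duality argument to gain a factor of $h$ against $\norm{e}_{H^{1}(D)}$, bound the latter by $\norm{u^{h}}_{H^{1/2}(\partial D)}$, and close with the trace-space inverse inequality $h\norm{u^{h}}_{H^{1/2}(\partial D)}\lesssim\norm{u^{h}}_{H^{-1/2}(\partial D)}$. The only cosmetic differences are that the paper cites the $L^{2}$ error estimate directly (Brenner, Theorem~5.4.8) rather than spelling out the duality, and it bounds $\norm{e}_{H^{1}(D)}$ in one stroke via the Galerkin orthogonality identity $\mathcal{A}(e,e)=\mathcal{A}(e,\HE^{h}u^{h})$ rather than splitting through a triangle inequality and invoking a Scott--Zhang interpolant; both variants land on the same $\norm{u^{h}}_{H^{1/2}(\partial D)}$ bound.
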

    \begin{proof}
        According to \cref{refdomain}, it is enough to prove that
        \begin{equation*}
            \norm{\HE^{h}u^{h}\!-\HE u^{h}}_{L^{2}(D)}\lesssim\norm{u^{h}}_{H^{-1/2}(\partial D)}.
        \end{equation*}
        By the $L^{2}$ estimation for $\HE^{h}u^{h}$ (see Theorem~5.4.8 in \cite{Brenner2008}),
        \begin{equation}
            \label{est41}
            \norm{\HE^{h}u^{h}\!-\HE u^{h}}_{L^{2}(D)}\lesssim h\,\norm{\HE^{h}u^{h}\!-\HE u^{h}}_{H^{1}(D)}.
        \end{equation}
        Since $\HE^{h}u^{h}\!-\HE u^{h}$ vanishes on $\partial D$ and $\HE u^{h}$ is the solution of \cref{defHhutmp},
        \begin{equation*}
            \mathcal{A}(\HE u^{h},\HE^{h}u^{h}\!-\HE u^{h})=0.
        \end{equation*}
        By using the norm equivalence \cref{h1eqtmp} and the Cauchy-Schwarz inequality,
        \begin{equation*}
            \begin{aligned}
                \norm{\HE^{h}u^{h}\!-\HE u^{h}}_{H^{1}(D)}^{2}&\approx \mathcal{A}(\HE^{h}u^{h}\!-\HE u^{h},\HE^{h}u^{h}\!-\HE u^{h})\\
                &=\mathcal{A}(\HE^{h}u^{h}\!-\HE u^{h},\HE^{h}u^{h})\\
                &\lesssim\norm{\HE^{h}u^{h}\!-\HE u^{h}}_{H^{1}(D)}\norm{\HE^{h}u^{h}}_{H^{1}(D)}.
            \end{aligned}
        \end{equation*}
        Then eliminating $\norm{\HE^{h}u^{h}\!-\HE u^{h}}_{H^{1}(D)}$ on both sides, we have
        \begin{equation}
            \label{est42}
            \norm{\HE^{h}u^{h}\!-\HE u^{h}}_{H^{1}(D)}\lesssim \norm{\HE^{h}u^{h}}_{H^{1}(D)}.
        \end{equation}
        Since $\HE^{h}u^{h}$ is the solution of \cref{defHhutmp}, by using \cref{minrq} and the extension theorem (see Lemma~3.77 in \cite{Mathew2008}),
        \begin{equation}
            \label{est43}
            \norm{\HE^{h}u^{h}}_{H^{1}(D)}\lesssim \norm{u^{h}}_{H^{1/2}(\partial D)}.
        \end{equation}
        And by combining \cref{est41,est42,est43},
        \begin{equation*}
            \norm{\HE^{h}u^{h}\!-\HE u^{h}}_{L^{2}(D)}\lesssim h\,\norm{u^{h}}_{H^{1/2}(\partial D)}.
        \end{equation*}
        Since $u^{h}$ is piecewise linear on $\partial D$, by using the inverse estimation (see Theorem~4.1 and Theorem~4.6 in \cite{Dahmen2004}),
        \begin{equation*}
            \norm{\HE^{h}u^{h}\!-\HE u^{h}}_{L^{2}(D)}\lesssim h\,\norm{u^{h}}_{H^{1/2}(\partial D)}\lesssim \norm{u^{h}}_{H^{-1/2}(\partial D)},
        \end{equation*}
        which finishes the proof.
    \end{proof}
    \begin{corollary}
        \label{H1/2}
        If the diameter of domain $D$ is $H$, it can be verified
        that
        \begin{equation*}
            \norm{\HE^{h} u^{h}}_{L^{2}(D)}\lesssim \norm{u^{h}}_{H^{-1/2}(\partial D)}\lesssim H^{1/2}\norm{u^{h}}_{L^{2}(\partial D)}
        \end{equation*}
        by a scaling argument and the Sobolev inequality.
    \end{corollary}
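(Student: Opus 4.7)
The plan is to derive both inequalities by rescaling $D$ to a reference unit-diameter domain, applying the preceding lemma, and carefully tracking the powers of $H$ that appear from the change of variables. So first I introduce $\widehat{D} \defi H^{-1} D$, which has unit diameter, and for every function $v$ defined on $D$ I set $\hat{v}(\hat{x}) \defi v(H\hat{x})$ on $\widehat{D}$. Under this change of variables, the bulk and trace $L^2$ norms transform as
\begin{equation*}
\norm{v}_{L^{2}(D)}^{2} = H^{d}\norm{\hat v}_{L^{2}(\widehat D)}^{2},\qquad
\norm{v}_{L^{2}(\partial D)}^{2} = H^{d-1}\norm{\hat v}_{L^{2}(\partial \widehat D)}^{2},
\end{equation*}
and the $H^{1/2}$ seminorm on the boundary is scale-invariant, so the scaled full norm of the paper gives $\norm{u}_{H^{1/2}(\partial D)}^{2} \approx H^{d-1}\norm{\hat u}_{H^{1/2}(\partial\widehat D)}^{2}$; by duality the same power of $H$ appears in $\norm{u}_{H^{-1/2}(\partial D)}^{2} \approx H^{d-1}\norm{\hat u}_{H^{-1/2}(\partial\widehat D)}^{2}$.

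Next I verify that the rescaled bilinear form $\widehat{\mathcal A}(\hat v_1,\hat v_2) \defi H^{-d}\mathcal A(v_1,v_2)$, after multiplying out $\partial_i v = H^{-1}\hat\partial_i\hat v$ and $\de x = H^{d}\de\hat x$, can be written as an $\mathcal{A}$-type form on $\widehat D$ with coefficients $\hat a_{ij}(\hat x) = a_{ij}(H\hat x)$ and a zeroth-order coefficient $H^{2} c(H\hat x)$. These rescaled coefficients retain the uniform Lipschitz and positive-definiteness bounds (the factor $H^{2}$ is harmless since $H<1$ and $c$ is bounded), so the hypothesis \cref{h1eqtmp} transfers to $\widehat{\mathcal A}$ uniformly in $H$. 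Moreover, the harmonic extension commutes with the rescaling: if $\HE^{h}u^{h}$ solves \cref{defHhutmp} on $D$ with boundary data $u^{h}$, then $\widehat{\HE}^{\hat h}\hat u^{h}$ solves the analogous problem on $\widehat D$ with boundary data $\hat u^{h}$ on the refined mesh $\mathcal T_{\widehat D} = H^{-1}\mathcal T_D$. Applying the preceding lemma to $\widehat D$ gives $\norm{\widehat\HE^{\hat h}\hat u^{h}}_{L^{2}(\widehat D)} \lesssim \norm{\hat u^{h}}_{H^{-1/2}(\partial\widehat D)}$, and unwinding the scalings via the identities above yields the first inequality
\begin{equation*}
\norm{\HE^{h}u^{h}}_{L^{2}(D)} \lesssim \norm{u^{h}}_{H^{-1/2}(\partial D)}.
\end{equation*}

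For the second inequality, I use the trivial embedding $L^{2}(\partial\widehat D) \hookrightarrow H^{-1/2}(\partial\widehat D)$ on the reference domain, namely $\norm{\hat u^{h}}_{H^{-1/2}(\partial\widehat D)} \lesssim \norm{\hat u^{h}}_{L^{2}(\partial\widehat D)}$. Multiplying through the scaling relations $\norm{u^{h}}_{H^{-1/2}(\partial D)} \approx H^{(d-1)/2}\norm{\hat u^{h}}_{H^{-1/2}(\partial\widehat D)}$ and $\norm{\hat u^{h}}_{L^{2}(\partial\widehat D)} = H^{-(d-1)/2}\norm{u^{h}}_{L^{2}(\partial D)}$ produces a factor of $H^{1/2}$ on the right; a cleaner route is to note that in the paper's scaled full norm the $L^{2}(\partial D)$ term carries weight $H^{-1}$ inside the $H^{1/2}$ norm, which by duality becomes a weight $H^{+1}$ inside $H^{-1/2}$, so $\norm{u^{h}}_{H^{-1/2}(\partial D)}^{2} \lesssim H\,\norm{u^{h}}_{L^{2}(\partial D)}^{2}$. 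The main obstacle is the bookkeeping in the second paragraph: justifying that the hypothesis of the preceding lemma (equivalence of $\widehat{\mathcal A}$ to $\norm{\cdot}_{H^{1}(\widehat D)}^{2}$) survives the rescaling with constants independent of $H$; once this is verified, the rest is just tracking the powers of $H$ in the change of variables.
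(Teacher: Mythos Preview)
Your approach is exactly what the paper intends (``a scaling argument and the Sobolev inequality''), and your ``cleaner route'' for the second inequality is correct and clean: since the paper's scaled $H^{1/2}$ norm satisfies $\norm{\phi}_{H^{1/2}(\partial D)}^{2}\ge H^{-1}\norm{\phi}_{L^{2}(\partial D)}^{2}$, duality immediately gives $\norm{u}_{H^{-1/2}(\partial D)}\le H^{1/2}\norm{u}_{L^{2}(\partial D)}$.

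However, several of your explicit scaling relations are wrong and would not give the stated first inequality if carried through. First, the $H^{1/2}$ seminorm on $\partial D$ is scale-invariant only when $d=2$; in general $\abs{u}_{H^{1/2}(\partial D)}^{2}=H^{d-2}\abs{\hat u}_{H^{1/2}(\partial\widehat D)}^{2}$, and with the paper's weighted $L^{2}$ term one finds $\norm{u}_{H^{1/2}(\partial D)}^{2}=H^{d-2}\norm{\hat u}_{H^{1/2}(\partial\widehat D)}^{2}$, not $H^{d-1}$. Second, duality does \emph{not} preserve the scaling power: the $L^{2}(\partial D)$ pairing contributes $H^{d-1}$, so $\norm{u}_{H^{-1/2}(\partial D)}^{2}=H^{d}\norm{\hat u}_{H^{-1/2}(\partial\widehat D)}^{2}$. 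With these correct powers the first inequality falls out cleanly, since $\norm{\HE^{h}u^{h}}_{L^{2}(D)}^{2}=H^{d}\norm{\widehat\HE^{\hat h}\hat u^{h}}_{L^{2}(\widehat D)}^{2}\lesssim H^{d}\norm{\hat u^{h}}_{H^{-1/2}(\partial\widehat D)}^{2}=\norm{u^{h}}_{H^{-1/2}(\partial D)}^{2}$. Third, your normalization $\widehat{\mathcal A}=H^{-d}\mathcal A$ does not produce the coefficients you claim; the correct factor is $H^{2-d}$, which yields $\hat a_{ij}=a_{ij}(H\,\cdot\,)$ and zeroth-order coefficient $H^{2}c(H\,\cdot\,)$ as you then state. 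With these corrections your argument goes through; note also that the equivalence \cref{h1eqtmp} is only needed on $H^{1}_{0}(\widehat D)$ in the lemma's proof, where Poincar\'e on the unit domain supplies the missing $L^{2}$ control, so your worry about the lower bound for nonzero-trace functions is not actually an obstacle.
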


    Suppose $\HE_{\rho,k}^{h}u^{h}$ is the solution of
    \begin{equation*}
        \begin{aligned}
            a_{\rho}(\HE_{\rho,k}^{h}u^{h},v_{i,k}^{h})&=0\,\,\,\quad\forall\, v_{i,k}^{h}\in V_{I}^{h},\,\supp(v_{i,k}^{h})\subset \overline{\Omega}_{k},\\
            \HE_{\rho,k}^{h}u^{h}&=u^{h}\quad\text{on }\partial\Omega_{k},
        \end{aligned}
    \end{equation*}
    then $\HE_{\rho,k}^{h}u^{h}$ is well-defined in $\Omega_{k}$ and $\HE_{\rho}^{h}u^{h}=\HE_{\rho,k}^{h}u^{h}$ in $\Omega_{k}$ for all $1\leq k\leq N$, where $\HE_{\rho}^{h}u^{h}$ is the solution of \cref{defHhuh}. In other words, the extension $\HE_{\rho}^{h}u^{h}$ can be computed in each subdomain by the boundary value problem separately.
    \begin{lemma}
        \label{lapext}
        Assume $\lambda\leq\rho\leq\rho_{0}<\alpha$, the extension operator $\HE_{\rho}$ is bounded with norm
        \begin{equation*}
            \normm{\HE_{\rho}^{h}}\defi \sup_{0\neq u^{h}\in V_{\Gamma}^{h}}\frac{\norm{\HE_{\rho}^{h}u^{h}} }{\norm{u^{h}}_{\Gamma}},
        \end{equation*}
        and the bound for $\normm{\HE_{\rho}^{h}}$ is $c_{\HE}$, where $c_{\HE}\lesssim H^{1/2}$.
    \end{lemma}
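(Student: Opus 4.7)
The plan is to localize the extension to subdomains and apply \cref{H1/2} on each piece. As observed in the paragraph immediately preceding the lemma, $\HE_{\rho}^{h}u^{h}$ coincides on each $\Omega_{k}$ with the local $a_{\rho}$\nobreakdash-harmonic extension $\HE_{\rho,k}^{h}u^{h}$ of $u^{h}|_{\partial\Omega_{k}}$, so
$\norm{\HE_{\rho}^{h}u^{h}}^{2}=\sum_{k=1}^{N}\norm{\HE_{\rho,k}^{h}u^{h}}_{L^{2}(\Omega_{k})}^{2}$. Since $\norm{\cdot}_{\Gamma}$ is spectrally equivalent to $\norm{\cdot}_{L^{2}(\Gamma)}$, it suffices to establish the per-subdomain bound $\norm{\HE_{\rho,k}^{h}u^{h}}_{L^{2}(\Omega_{k})}\lesssim H^{1/2}\,\norm{u^{h}}_{L^{2}(\partial\Omega_{k})}$.

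For this local estimate I would apply \cref{H1/2} on $\Omega_{k}$ with the bilinear form $\mathcal{A}=a_{\rho}$. The coefficient hypotheses ($a_{ij}\in C^{0,1}$, $c\equiv\rho$ constant) are trivially satisfied. For the norm-equivalence hypothesis of \cref{refdomain}, I would scale $\Omega_{k}$ to the unit reference domain, where the bilinear form becomes $\hat a - H^{2}\rho\,(\cdot,\cdot)$; since $\rho\leq\rho_{0}\approx\lambda$ is bounded independently of $H$, the zero\nobreakdash-order term $H^{2}\rho$ vanishes as $H\to 0$ and the scaled form is dominated by its uniformly elliptic principal part. Then \cref{H1/2} gives the desired per-subdomain estimate, and summing over $k$ (each interface lying on the boundary of at most two subdomains) produces $\sum_{k}\norm{u^{h}}_{L^{2}(\partial\Omega_{k})}^{2}\lesssim\norm{u^{h}}_{L^{2}(\Gamma)}^{2}\approx\norm{u^{h}}_{\Gamma}^{2}$, hence $\normm{\HE_{\rho}^{h}}\lesssim H^{1/2}$.

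The main difficulty lies in making the application of \cref{H1/2} genuinely uniform in $\rho\in[\lambda,\rho_{0}]$, because $a_{\rho}$ is \emph{not} positive definite on all of $H^{1}(\Omega_{k})$ — constant test functions give $a_{\rho}(c,c)=-\rho\,c^{2}\abs{\Omega_{k}}<0$, so the literal hypothesis of \cref{refdomain} fails. The remedy is that the Aubin--Nitsche argument behind \cref{refdomain} only uses coercivity of $a_{\rho}$ on $H_{0}^{1}(\Omega_{k})$ (for well-posedness of the dual Dirichlet problem) together with $H^{2}$ regularity on the convex polygonal subdomain; by \cref{lapcoer} the smallest Dirichlet eigenvalue on $\Omega_{k}$ is $\gtrsim H^{-2}$ while $\rho\lesssim 1$, so $a_{\rho}$ is coercive on $H_{0}^{1}(\Omega_{k})$ uniformly in $\rho\in[\lambda,\rho_{0}]$, and the scaling of the $L^{2}$ and $H^{-1/2}$ norms proceeds exactly as in the proofs already given for \cref{refdomain} and \cref{H1/2}.
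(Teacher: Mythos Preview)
Your proof is correct and follows essentially the same route as the paper: localize to subdomains via $\HE_{\rho}^{h}u^{h}\big|_{\Omega_{k}}=\HE_{\rho,k}^{h}u^{h}$, apply \cref{H1/2} on each $\Omega_{k}$, and sum. The paper's proof is a one-line invocation of \cref{H1/2} without further comment.

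Your additional paragraph is in fact more careful than the paper. The hypothesis \eqref{h1eqtmp} in \cref{refdomain} literally asks for $\mathcal{A}(v,v)\approx\norm{v}_{H^{1}(D)}^{2}$ on all of $H^{1}(D)$, which $a_{\rho}$ with $\rho>0$ fails (constants give a negative value), and the paper does not address this. Your observation that the Aubin--Nitsche duality behind \cref{refdomain} and the C\'ea-type argument behind \cref{H1/2} only require coercivity on $H_{0}^{1}(\Omega_{k})$, together with the scaling $H^{2}\rho\ll 1$ and the $H^{2}$ regularity on the convex $\Omega_{k}$, is exactly the right way to close this gap. So your proposal is the paper's argument with the missing hypothesis check supplied.
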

    \begin{proof}
        For all $u^{h}\in V_{\Gamma}^{h}$, according to \cref{H1/2},
        \begin{equation*}
            \norm{\HE_{\rho}^{h}u^{h}}^{2}=\int_{\Omega}\abs{\HE_{\rho}^{h}u^{h}}^{2}\de x=\sum_{k=1}^{N}\int_{\Omega_{k}}\abs{\HE_{\rho}^{h}u^{h}}^{2}\de x =\sum_{k=1}^{N}\int_{\Omega_{k}}\abs{\HE_{\rho,k}^{h}u^{h}}^{2}\de x\lesssim H\,\norm{u^{h}}_{\Gamma}^{2}.
        \end{equation*}
        Therefore  $c_{\HE}=\max\limits_{\lambda\leq\rho\leq\rho_{0}}\normm{\HE_{\rho}^{h}}\lesssim H^{1/2}$.
    \end{proof}

    The Steklov-Poincar\'e operator $S_{\rho}^{h}\colon V_{\Gamma}^{h}\mapsto(V_{\Gamma}^{h})^{\prime}$ can be defined as
    \begin{equation*}
        \dual{S_{\rho}^{h}u_{1}^{h},u_{2}^{h}} \defi a_{\rho}(\HE_{\rho}^{h}u_{1}^{h},\HE_{\rho}^{h}u_{2}^{h}),
    \end{equation*}
    and the eigenvalue problem on $\Gamma$ is
    \begin{equation}
        \label{feschureig}
        \dual{S_{\rho}^{h}u_{\rho}^{h},u^{h}}=\theta_{\rho}^{h}\,(u_{\rho}^{h},u^{h})_{\Gamma}\quad\forall\, u^{h}\in V_{\Gamma}^{h}.
    \end{equation}
    Now  we can use Algorithm~\ref{algo} to calculate the elliptic eigenvalue problem \cref{lap}.
    \begin{remark}
        Suppose $\{\phi_{j}\}_{j=1}^{n}$ is the basis of $V_{\Gamma}^{h}$, then the discrete $L^{2}$ inner product on $\Gamma$ can be defined as
        \begin{equation*}
            (u,u^{*})_{l^{2}(\Gamma)}\defi h^{d-1}\sum_{j=1}^{n}u_{j}u_{j}^{*}
        \end{equation*}
        for $u=\sum_{j=1}^{n}u_{j}\phi_{j}$ and $u^{*}=\sum_{j=1}^{n}u_{j}^{*}\phi_{j}$. Since $\norm{u}_{L^{2}(\Gamma)}\approx \norm{u}_{l^{2}(\Gamma)}$ (see Equation~2.2 in \cite{Bramble1991}), the convergence analysis also holds for the discrete $L^{2}$ norm. Moreover, the mass matrix for \cref{feschureig} becomes a scalar matrix, which makes it easy to compute.
    \end{remark}
    \begin{lemma}
        \label{eqevec}
        For all $\lambda^{h}\leq\rho\leq\rho_{0}$, $\norm{\HE_{\rho}^{h}u_{\rho}^{h}}\approx H^{1/2}\norm{u_{\rho}^{h}}_{\Gamma}$.
    \end{lemma}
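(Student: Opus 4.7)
The plan is to prove the equivalence by showing the two one-sided bounds separately, where the upper bound is immediate from \cref{lapext} and the lower bound is obtained by combining \cref{minevalS} with the fact that the Newton-Schur eigenvalue $\theta_\rho^h$ stays non-positive on the interval $[\lambda^h,\rho_0]$.

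First, I would dispatch the upper estimate. By definition $\normm{\HE_\rho^h}\leq c_{\HE}$ and \cref{lapext} shows $c_{\HE}\lesssim H^{1/2}$, so
$\norm{\HE_\rho^h u_\rho^h}\leq c_{\HE}\norm{u_\rho^h}_\Gamma\lesssim H^{1/2}\norm{u_\rho^h}_\Gamma$ for every $\rho\in[\lambda^h,\rho_0]$. This direction requires no information about $u_\rho^h$ being an eigenfunction.

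For the lower estimate, the key observation is that the smallest eigenvalue $\theta_\rho^h$ of $S_\rho^h$ is non-positive on $[\lambda^h,\rho_0]$. Indeed, at $\rho=\lambda^h$ one has $\theta_{\lambda^h}^h=0$ by the correspondence established around \cref{eigsp,eigtmp}, and \cref{lemdiff1} gives $\theta_\rho^{\prime}<0$, so $\theta_\rho^h\leq 0$ for $\rho\geq\lambda^h$. Using this with the definitions $\dual{S_\rho^h u_\rho^h,u_\rho^h}=a_\rho(\HE_\rho^h u_\rho^h,\HE_\rho^h u_\rho^h)=\theta_\rho^h\norm{u_\rho^h}_\Gamma^2$ yields
\begin{equation*}
a(\HE_\rho^h u_\rho^h,\HE_\rho^h u_\rho^h)\leq \rho\,\norm{\HE_\rho^h u_\rho^h}^{2}\leq \rho_0\,\norm{\HE_\rho^h u_\rho^h}^{2}.
\end{equation*}
Now invoking \cref{minevalS} (the inverse inequality $H\norm{u_\rho^h}_\Gamma^2\lesssim\abs{\HE_\rho^h u_\rho^h}_a^2$) and noting that $\rho_0=\lambda^H$ is bounded independently of $h,H$,
\begin{equation*}
H\,\norm{u_\rho^h}_\Gamma^{2}\lesssim a(\HE_\rho^h u_\rho^h,\HE_\rho^h u_\rho^h)\lesssim \norm{\HE_\rho^h u_\rho^h}^{2},
\end{equation*}
which gives $H^{1/2}\norm{u_\rho^h}_\Gamma\lesssim\norm{\HE_\rho^h u_\rho^h}$ and completes the proof.

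There is no substantial obstacle here; the only subtle point is ensuring the non-positivity of $\theta_\rho^h$ on the relevant interval, which hinges on the monotonicity already recorded in \cref{lemdiff1} together with the identification $\theta_{\lambda^h}^h=0$. The rest is a short combination of the boundedness of the extension (\cref{lapext}) and the lower bound on the Steklov-Poincar\'e form (\cref{minevalS}), both of which have been proved earlier in the section.
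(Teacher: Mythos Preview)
Your proof is correct and follows essentially the same approach as the paper: the upper bound from \cref{lapext}, and the lower bound by combining $\theta_\rho^h\leq 0$ (hence $a_\rho(\HE_\rho^h u_\rho^h,\HE_\rho^h u_\rho^h)\leq 0$) with \cref{minevalS}. Your justification of the non-positivity of $\theta_\rho^h$ via \cref{lemdiff1} is slightly more explicit than the paper's, which simply asserts it.
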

    \begin{proof}
        On the one hand, by \cref{lapext}, we have $\norm{\HE_{\rho}^{h}u_{\rho}^{h}}\lesssim H^{1/2}\norm{u_{\rho}^{h}}_{\Gamma}$. On the other hand, since $u_{\rho}^{h}$ is the eigenvector of \cref{feschureig} and $\theta_{\rho}^{h}\leq0$,
        \begin{equation*}
            a_{\rho}(\HE_{\rho}^{h}u_{\rho}^{h},\HE_{\rho}^{h}u_{\rho}^{h})=\dual{S_{\rho}^{h}u_{\rho}^{h},u_{\rho}^{h}}=\theta_{\rho}^{h}\norm{u_{\rho}^{h}}_{\Gamma}^{2}\leq0.
        \end{equation*}
        The lemma is obtained by using \cref{minevalS}.
    \end{proof}
    \subsection{Convergence analysis}
    In order to analyze the convergence factor of Algorithm~\ref{algo}, some estimations on the eigenvalue of $S_{\rho}^{h}$ should be calculated first. Similar to \cref{lapext}, a bounded bijective extension $\HE_{\rho}$ from $V_{\Gamma}$ to $V_{B,\rho}$ can be defined as
    \begin{equation}
        \label{defHuh}
        \begin{aligned}
            a_{\rho}(\HE_{\rho}u,v_{I})&=0\quad \forall\, v_{I}\in V_{I},\\
            \HE_{\rho}u&=u\quad\text{on }\Gamma,
        \end{aligned}
    \end{equation}
    where $V_{\Gamma}$ is the trace space of $V$ on $\Gamma$, and $V_{I}$ is the subspace of $V$ whose members vanish at $\Gamma$ and $V_{B,\rho}$ is the $a_{\rho}$\nobreakdash-orthogonal space of $V_{I}$, \ie
    \begin{equation*}
        \begin{aligned}
            V_{I} &\defi \{v\in V\mid \Tr(v)=0\},\\
            V_{B,\rho}&\defi \{v\in V\mid a_{\rho}(v,v_{I})=0,\,\forall\, v_{I}\in V_{I}\}.
        \end{aligned}
    \end{equation*}

    The Steklov-Poincar\'e operator $S_{\rho}\colon V_{\Gamma}\mapsto (V_{\Gamma})^{\prime}$ can be defined as
    \begin{equation*}
        \dual{S_{\rho}u_{1},u_{2}}\defi a_{\rho}(\HE_{\rho}u_{1},\HE_{\rho}u_{2}),
    \end{equation*}
    and its eigenvalue problem is
    \begin{equation*}
        \dual{S_{\rho}u_{\rho},u} = \theta_{\rho}\,(u_{\rho},u)_{\Gamma}\quad\forall\, u\in V_{\Gamma}.
    \end{equation*}
    Now  we can generalize Theorem~3.1 in \cite{Babuska1987} to estimate the gap between eigenvalues of $S_{\rho}$ and $S_{\rho}^{h}$.
    \begin{lemma}
        \label{lemeigschurest}
        Suppose $\theta_{1}^{h}\leq\theta_{2}^{h}\leq\theta_{3}^{h}$ and $\theta_{1}<\theta_{2}\leq\theta_{3}$ are the smallest three eigenvalues of $S_{\rho}^{h}$ and $S_{\rho}$ respectively, where $\lambda^{h}\leq\rho\leq\rho_{0}$, then
        \begin{equation*}
            \lim_{h\to0}\abs{\theta_{j}^{h}\!-\theta_{j}}=0\quad j=1,2,3.
        \end{equation*}
    \end{lemma}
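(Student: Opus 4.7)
The plan is to combine the Courant--Fischer min-max characterization of eigenvalues with a two-sided comparison of the Rayleigh quotients of $S_\rho$ on $V_\Gamma$ and of $S_\rho^h$ on $V_\Gamma^h$, exploiting the conforming inclusion $V_\Gamma^h \subset V_\Gamma$ together with the Galerkin property built into $\HE_\rho^h$. The key algebraic identity is obtained by writing $w := \HE_\rho^h u^h - \HE_\rho u^h$ for $u^h \in V_\Gamma^h$: the difference $w$ vanishes on $\Gamma$ so lies in $V_I$, and $a_\rho$-orthogonality of $\HE_\rho u^h$ against $V_I$ yields
\[
\dual{S_\rho^h u^h,\,u^h} \;=\; \dual{S_\rho u^h,\,u^h} \;+\; a_\rho(w,w).
\]

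For the lower direction I would combine this identity with the fact that $a_\rho$ is positive on $V_I$ (because $\rho<\alpha$, by \cref{lapcoer}) to conclude $\dual{S_\rho^h u^h,u^h}\ge \dual{S_\rho u^h,u^h}$ for every $u^h\in V_\Gamma^h$. Since $V_\Gamma^h\subset V_\Gamma$, min-max over $j$-dimensional trial subspaces then yields $\theta_j \le \theta_j^h$ for $j=1,2,3$.

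For the upper direction, let $u_1,u_2,u_3\in V_\Gamma$ be $(\cdot,\cdot)_\Gamma$-orthonormal eigenvectors of $S_\rho$ corresponding to $\theta_1,\theta_2,\theta_3$, and choose a Scott--Zhang-type quasi-interpolation $I^h\colon V_\Gamma \to V_\Gamma^h$ with $\|I^h u - u\|_\Gamma\to 0$ as $h\to 0$ for each fixed $u\in V_\Gamma$. Setting $E_j^h := \mathrm{span}\{I^h u_1,\ldots,I^h u_j\}$, which is $j$-dimensional for $h$ sufficiently small, min-max gives $\theta_j^h \le \sup_{u^h\in E_j^h,\,\|u^h\|_\Gamma = 1}\dual{S_\rho^h u^h,u^h}$. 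By the identity above this reduces to two claims: $(i)$ $\dual{S_\rho u^h,u^h}\to \dual{S_\rho u,u}$ uniformly on the unit sphere of $E_j^h$, which follows from continuity of $\HE_\rho$ and norm equivalence on the fixed finite-dimensional space $\mathrm{span}\{u_1,\ldots,u_j\}$; and $(ii)$ $a_\rho(w,w)\to 0$ uniformly on the same sphere.

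Item $(ii)$ is the principal technical obstacle. Here I would exploit that, for any fixed lifting $\tilde u^h\in V^h$ of $u^h$, the function $\HE_\rho^h u^h - \tilde u^h$ is the $a_\rho$-Galerkin projection of $\HE_\rho u^h - \tilde u^h$ onto $V_I^h\subset V_I$, so a standard Céa-type bound gives
\[
a_\rho(w,w) \;\lesssim\; \inf_{\psi^h\in V_I^h} a_\rho\bigl(\HE_\rho u^h - \tilde u^h - \psi^h,\;\HE_\rho u^h - \tilde u^h - \psi^h\bigr).
\]
Density of $V_I^h$ in $V_I = H_0^1(\Omega)$, combined with compactness of the unit sphere of the finite-dimensional $E_j^h$ and the fact that $\HE_\rho u^h$ depends continuously on $u^h$, drives this infimum to zero uniformly. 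Collecting the two directions proves $\lim_{h\to 0}\theta_j^h = \theta_j$ for $j=1,2,3$.
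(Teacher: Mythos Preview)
Your strategy differs from the paper's. The paper introduces an intermediate quantity $\bar\theta_j^h$ and splits $|\theta_j-\theta_j^h|$ into (a) the eigenvalue error of the single operator $S_\rho$ between $V_\Gamma$ and $V_\Gamma^h$, handled by shifting to a coercive form $S_{\rho,\beta}$ equivalent to $\norm{\cdot}_{H^{1/2}_*(\Gamma)}$ and invoking Babu\v{s}ka--Osborn theory, and (b) the gap $\dual{S_\rho u^h,u^h}-\dual{S_\rho^h u^h,u^h}$, handled by the same C\'ea-type bound you use. You instead argue directly from min-max, and your lower bound $\theta_j\le\theta_j^h$, obtained from the identity $\dual{S_\rho^h u^h,u^h}=\dual{S_\rho u^h,u^h}+a_\rho(w,w)$ together with $V_\Gamma^h\subset V_\Gamma$, is clean, correct, and more direct than anything in the paper.

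The upper-bound half, however, has a genuine gap. Both items (i) and (ii) tacitly require convergence of the interpolants $I^h u_i\to u_i$ in $H^{1/2}_*(\Gamma)$, not merely in $\norm{\cdot}_\Gamma\approx\norm{\cdot}_{L^2(\Gamma)}$ as you state. The quadratic form $\dual{S_\rho u,u}=a_\rho(\HE_\rho u,\HE_\rho u)$ is continuous in $\norm{u}_{H^{1/2}_*(\Gamma)}$ (via the extension theorem) but not in $\norm{u}_\Gamma$; the ``continuity of $\HE_\rho$'' in this paper refers only to the $L^2(\Gamma)\to L^2(\Omega)$ bound, and your ``norm equivalence on the fixed finite-dimensional space $\spa{u_1,\dots,u_j}$'' does not apply to $I^h u_i$, which lies outside that span. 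Likewise in (ii), the target $\HE_\rho u^h-\tilde u^h$ moves with $h$, so density of $V_I^h$ in $V_I$ (which is $\bigoplus_k H_0^1(\Omega_k)$, not $H_0^1(\Omega)$) does not apply directly to a fixed element; you need uniform approximation of a moving family, which again hinges on $H^{1/2}$-control. The fix is straightforward---take $I^h u_i$ to be the trace on $\Gamma$ of a Scott--Zhang interpolant $J^h\HE_\rho u_i\in V^h$, so that $v_i^h:=J^h\HE_\rho u_i$ is a discrete lifting of $I^h u_i$ with $\norm{I^h u_i-u_i}_{H^{1/2}_*(\Gamma)}\lesssim\norm{J^h\HE_\rho u_i-\HE_\rho u_i}_{H^1(\Omega)}\to 0$, which simultaneously handles (i) and supplies the test function for (ii)---but as written the proposal asserts continuity that does not hold in the stated norm.
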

    \begin{proof}
        Since $\theta_{2}$ may be equal to $\theta_{3}$, we need to consider both cases. According to the well-known variational principles of eigenvalue (see Theorem~2.3 in Section~3 of \cite{Weinberger1974}), for $j=1,2,3$,
        \begin{equation}
            \label{deftheta}
            \theta_{j}=\!\!\!\!\min_{\begin{subarray}{c}
                U_{j}\subset V_{\Gamma}\\\dim U_{j}=j
            \end{subarray}}\!\!\!
            \max_{u\in U_{j}}\frac{\dual{S_{\rho}u,u}}{(u,u)_{\Gamma}}
        \quad\text{and}\quad
            \theta_{j}^{h}=\!\!\!\!\min_{\begin{subarray}{c}
                U_{j}^{h}\subset V_{\Gamma}^{h}\\\dim U_{j}^{h}=j
            \end{subarray}}\!\!\!
            \max_{u^{h}\in U_{j}^{h}}\frac{\dual{S_{\rho}^{h}u^{h},u^{h}}}{(u^{h},u^{h})_{\Gamma}}.
        \end{equation}
        Suppose the first minimal of $\theta_{j}$ and $\theta_{j}^{h}$ in \cref{deftheta} are reached by $U_{j}$ and $U_{j}^{h}$ respectively. Let
        \begin{equation}
            \label{defthetabar}
            \bar{\theta}_{j}^{h}\defi \max_{u^{h}\in U_{j}^{h}}\frac{\dual{S_{\rho}u^{h},u^{h}}}{(u^{h},u^{h})_{\Gamma}},
        \end{equation}
        then the error $\abs{\theta_{j}-\theta_{j}^{h}}$ can be decomposed as
        \begin{equation}
            \begin{aligned}
                \label{eiggapbbsk}
                &\abs{\theta_{j}-\theta_{j}^{h}}\leq \abs{\theta_{j}-\bar{\theta}_{j}^{h}}+\abs{\bar{\theta}_{j}^{h}\!-\theta_{j}^{h}}\\
                \leq&\Bigabs{
                    \max_{u\in U_{j}}\frac{\dual{S_{\rho}u,u}}{(u,u)_{\Gamma}}
                    -\max_{u^{h}\in U_{j}^{h}}\frac{\dual{S_{\rho}u^{h},u^{h}}}{(u^{h},u^{h})_{\Gamma}}
                    }+
                \max_{u^{h}\in U_{j}^{h}}\Bigabs{
                    \frac{\dual{S_{\rho}u^{h},u^{h}}-\dual{S_{\rho}^{h}u^{h},u^{h}}}{(u^{h},u^{h})_{\Gamma}}
                    }.
            \end{aligned}
        \end{equation}
        For the first term, it can be regarded as the error between the eigenvalue of $S_{\rho}$ in $V_{\Gamma}$ and $V_{\Gamma}^{h}$. Let
        \begin{equation*}
            \dual{S_{\rho,\beta}u_{1},u_{2}} \defi \dual{S_{\rho}u_{1},u_{2}} + \beta\,(u_{1},u_{2})_{\Gamma},
        \end{equation*}
        it is obvious that eigenvalues of $S_{\rho,\beta}$ are eigenvalues of $S_{\rho}$ plus $\beta$ in both $V_{\Gamma}$ and $V_{\Gamma}^{h}$, so it is enough to analyze the error between the eigenvalue of $S_{\rho,\beta}$ in $V_{\Gamma}$ and $V_{\Gamma}^{h}$. When $\beta=\rho_{0}\,c_{\HE}^{2}+H^{-1}$, by the trace theorem and the norm equivalence \cref{eqh1norm},
        \begin{equation*}
            \begin{aligned}
                \norm{u}_{H^{1/2}_{*}(\Gamma)}^{2}&
                \lesssim  \sum_{k=1}^{N}\abs{u}_{H^{1/2}(\partial\Omega_{k})}^{2}
                +\frac{1}{H}\norm{u}_{\Gamma}^{2}
                \lesssim \abs{\HE_{\rho}u}_{a}^{2}+(\beta-\rho\,\normm{\HE_{\rho}}^{2})\,\norm{u}_{\Gamma}^{2}
                \\ &\leq \abs{\HE_{\rho}u}_{a}^{2}+\beta\,\norm{u}_{\Gamma}^{2}=\dual{S_{\rho,\beta}u,u}.
            \end{aligned}
        \end{equation*}
        On the other hand, by the definition of $a_{\rho}(\cdot,\cdot)$, \cref{minrq,eqh1norm},
        \begin{equation*}
            \begin{aligned}
                \dual{S_{\rho,\beta}u,u}&=a_{\rho}(\HE_{\rho}u,\HE_{\rho}u)+\beta\,(u,u)_{\Gamma}\leq a_{\rho}(\HE u,\HE u)+\beta\,\norm{u}_{\Gamma}^{2}\\
                &\leq\,  \abs{\HE u}_{a}^{2}+\beta\,\norm{u}_{\Gamma}^{2}\approx \sum_{k=1}^{N}\abs{\HE u}_{H^{1}(\Omega_{k})}^{2}+\beta\,\norm{u}_{\Gamma}^{2}\\
                &\lesssim \sum_{k=1}^{N}\abs{u}_{H^{1/2}(\partial\Omega_{k})}^{2}+\beta\,\norm{u}_{\Gamma}^{2}
                \lesssim \norm{u}_{H^{1/2}_{*}(\Gamma)}^{2},
            \end{aligned}
        \end{equation*}
        where $\HE u$ is an extension satisfying $\abs{\HE u}_{H^{1}(\Omega_{k})}\lesssim \abs{u}_{H^{1/2}(\partial\Omega_{k})}$ for all $1\leq k\leq N$ (see Lemma~3.78 in \cite{Mathew2008}). So $S_{\rho,\beta}$ is bounded in $H_{*}^{1/2}(\Gamma)$. Let
        \begin{equation*}
            \norm{u}_{\beta}\defi \big(\dual{S_{\rho,\beta}u,u}\big)^{1/2}\approx \norm{u}_{H^{1/2}_{*}(\Gamma)},
        \end{equation*}
        by using the Theorem~3.1 in \cite{Babuska1987},
        \begin{equation}
            \label{esttheta}
            \Bigabs{\Big(
            \max_{u\in U_{j}}\frac{\dual{S_{\rho}u,u}}{(u,u)_{\Gamma}}
            -\max_{u^{h}\in U_{j}^{h}}\frac{\dual{S_{\rho}u^{h},u^{h}}}{(u^{h},u^{h})_{\Gamma}}
            \Big)}\lesssim \,\delta_{j,h}^{2},
        \end{equation}
        where
        \begin{equation*}
            \delta_{j,h}=\inf_{\begin{subarray}{c}
                u\in M(\theta_{j})\\ \norm{u}_{\beta}=1
            \end{subarray}}
            \inf_{u^{h}\in V_{\Gamma}^{h}}\norm{u-u^{h}}_{\beta}\quad j=1,2,
        \end{equation*}
        and
        \begin{equation*}
            \delta_{3,h}=
            \left\{
                \begin{array}{ll}
                    \inf\limits_{\begin{subarray}{c}
                        u\in M(\theta_{3})\\ \norm{u}_{\beta}=1
                    \end{subarray}}
                    \inf\limits_{u^{h}\in V_{\Gamma}^{h}}\norm{u-u^{h}}_{\beta}\quad\text{if }\theta_{2}\neq\theta_{3},\\
                    \inf\limits_{\begin{subarray}{c}
                        u\in M(\theta_{2})\\ \norm{u}_{\beta}=1\\ (u,u_{2})_{\beta}=0
                    \end{subarray}}
                    \!\!\inf\limits_{u^{h}\in V_{\Gamma}^{h}}\norm{u-u^{h}}_{\beta}\quad\text{if }\theta_{2}=\theta_{3},
                \end{array}\right.
        \end{equation*}
        where $u_{2}$ is the choice of the first infimum of $\delta_{2,h}$ and $M(\theta_{j})$ is the eigenspace corresponding to $\theta_{j}$. Due to Theorem~3.2.3 of \cite{Ciarlet2002}, we have $\lim_{h\to0}\,\delta_{j,h}=0$, thus, the first term in \cref{eiggapbbsk} goes to zeros as $h\to0$, \ie
        \begin{equation}
            \label{eiggapbbsk1}
            \lim_{h\to0}\abs{\theta_{j}-\bar{\theta}_{j}^{h}}=0.
        \end{equation}
        For the second term,
        \begin{equation*}
                \dual{S_{\rho}u^{h},u^{h}}-\dual{S_{\rho}^{h}u^{h},u^{h}}=
                a_{\rho}(\HE_{\rho}u^{h}\!-\HE_{\rho}^{h}u^{h},\HE_{\rho}u^{h}\!+\HE_{\rho}^{h}u^{h}).
        \end{equation*}
        Since $\HE_{\rho}u^{h}\!-\HE_{\rho}^{h}u^{h}$ vanishes on $\Gamma$, due to $\HE_{\rho}u^{h}$ is the solution of \cref{defHuh},
        \begin{equation*}
            a_{\rho}(\HE_{\rho}u^{h}\!-\HE_{\rho}^{h}u^{h},\HE_{\rho}u^{h}) = 0.
        \end{equation*}
        Since $a_{\rho}(\cdot,\cdot)$ is positive definite on $V_{I}$, combining these two equations above, we have
        \begin{equation}
            \label{est31}
            \begin{aligned}
                \Bigabs{\,\dual{S_{\rho}u^{h},u^{h}}-\dual{S_{\rho}^{h}u^{h},u^{h}}\,}
                &=\Bigabs{\,a_{\rho}(\HE_{\rho}u^{h}\!-\HE_{\rho}^{h}u^{h},\HE_{\rho}u^{h}\!+\HE_{\rho}^{h}u^{h})\,}\\
                &=\Bigabs{\,a_{\rho}(\HE_{\rho}u^{h}\!-\HE_{\rho}^{h}u^{h},-\HE_{\rho}u^{h}\!+\HE_{\rho}^{h}u^{h})\,}\\
                &=a_{\rho}(\HE_{\rho}u^{h}\!-\HE_{\rho}^{h}u^{h},\HE_{\rho}u^{h}\!-\HE_{\rho}^{h}u^{h}).
            \end{aligned}
        \end{equation}
        Similar to C\'ea's lemma, for all $v^{h}\in V_{B,\rho}^{h}$ with trace $u^{h}$ on $\Gamma$,
        \begin{equation}
            \label{est32}
            a_{\rho}(\HE_{\rho}u^{h}\!-\HE_{\rho}^{h}u^{h},\HE_{\rho}u^{h}\!-\HE_{\rho}^{h}u^{h})
            \leq a_{\rho}(\HE_{\rho}u^{h}\!-v^{h},\HE_{\rho}u^{h}\!-v^{h})\lesssim \abs{\HE_{\rho}u^{h}\!-v^{h}}_{H^{1}}^{2}
        \end{equation}
        due to the norm equivalence \cref{eqh1norm} and the Poincar\'e inequality. Combining \cref{eiggapbbsk,est31,est32} we know
        \begin{equation}
            \label{est33}
            \begin{aligned}
                \abs{\bar{\theta}_{j}^{h}\!-\theta_{j}^{h}}&\lesssim
                \max_{u^{h}\in U_{j}^{h}}\inf_{v^{h}\in V_{B,\rho}^{h}}\frac{\abs{\HE_{\rho}u^{h}\!-v^{h}}_{H^{1}}^{2}}{\norm{u^{h}}_{\Gamma}^{2}}\\
                &=\max_{u^{h}\in U_{j}^{h}}\Big\{\frac{\abs{\HE_{\rho}u^{h}}_{H^{1}}^{2}}{\norm{u^{h}}_{\Gamma}^{2}}\inf_{v^{h}\in V_{B,\rho}^{h}}\frac{\abs{\HE_{\rho}u^{h}\!-v^{h}}_{H^{1}}^{2}}{\abs{\HE_{\rho}u^{h}}_{H^{1}}^{2}}\Big\}.
            \end{aligned}
        \end{equation}
        According to the definition of $\bar{\theta}_{j}^{h}$ \cref{defthetabar} and the norm equivalence \cref{eqh1norm},
        \begin{equation*}
            \bar{\theta}_{j}^{h}\geq \frac{\dual{S_{\rho}u^{h},u^{h}}}{(u^{h},u^{h})_{\Gamma}}=\frac{a_{\rho}(\HE_{\rho}u^{h},\HE_{\rho}u^{h})}{(u^{h},u^{h})_{\Gamma}}\approx \frac{\abs{\HE_{\rho}u^{h}}_{H^{1}}^{2}-\rho \norm{\HE_{\rho}u^{h}}^{2}}{\norm{u^{h}}_{\Gamma}^{2}}
        \end{equation*}
        for all $u^{h}\in U_{j}^{h}$. So
        \begin{equation}
            \label{est34}
            \frac{\abs{\HE_{\rho}u^{h}}_{H^{1}}^{2}}{\norm{u^{h}}_{\Gamma}^{2}}\lesssim \bar{\theta}_{j}^{h}+\rho_{0}H\lesssim \theta_{j}+\,\delta_{j,h}^{2}+\rho_{0}H
        \end{equation}
        due to \cref{lapext,esttheta}. By combining \cref{est33,est34},
        \begin{equation}
            \label{eiggapbbsk2}
            \lim_{h\to0}\abs{\bar{\theta}_{j}^{h}\!-\theta_{j}^{h}}\lesssim \lim_{h\to0}\bigg(\max_{u^{h}\in U_{j}^{h}}\Big\{\frac{\abs{\HE_{\rho}u^{h}}_{H^{1}}^{2}}{\norm{u^{h}}_{\Gamma}^{2}}\inf_{v^{h}\in V_{B,\rho}^{h}}\frac{\abs{\HE_{\rho}u^{h}\!-v^{h}}_{H^{1}}^{2}}{\abs{\HE_{\rho}u^{h}}_{H^{1}}^{2}}\Big\}\bigg)=0.
        \end{equation}
        Combining \cref{eiggapbbsk1,eiggapbbsk2}, we finish the proof.
    \end{proof}
    \begin{corollary}
        \label{estcs}
        Let $(\theta_{\lambda}^{h})^{(j)}$ and $\theta_{\lambda}^{(j)}$ be the $j$th smallest eigenvalue of $S_{\lambda}^{h}$ and $S_{\lambda}$ respectively, then
        \begin{equation*}
            \lim_{h\to0}|c_{s}^{h}-c_{s}|=0,
        \end{equation*}
        where $c_{s}^{h} = \frac{(\theta_{\lambda}^{h})^{(2)}}{(\theta_{\lambda}^{h})^{(3)}}$ and $c_{s} = \frac{\theta_{\lambda}^{(2)}}{\theta_{\lambda}^{(3)}}$.
    \end{corollary}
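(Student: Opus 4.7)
The plan is to derive the corollary as an almost immediate consequence of \cref{lemeigschurest}. Specializing the previous lemma to $\rho = \lambda$ (the assumption $\lambda^h \leq \rho \leq \rho_0$ can be replaced by $\lambda \leq \rho \leq \rho_0 < \alpha$ with no change in the argument, since the proof only uses that $\rho$ is bounded strictly below $\alpha$ so that the relevant constants in \cref{lapext,minevalS} and the trace-norm equivalence remain uniform), I would obtain
\begin{equation*}
    \lim_{h\to0}\bigabs{(\theta_{\lambda}^{h})^{(j)}-\theta_{\lambda}^{(j)}}=0,\qquad j=1,2,3.
\end{equation*}
In particular $(\theta_{\lambda}^{h})^{(2)}\to\theta_{\lambda}^{(2)}$ and $(\theta_{\lambda}^{h})^{(3)}\to\theta_{\lambda}^{(3)}$.

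Next I would verify that the denominator in $c_{s}$ is strictly positive, so that the ratio is well-defined and the limit can be passed through the quotient. By the reduction \cref{eigsp}--\cref{eigtmp} between the eigenproblems of $a(\cdot,\cdot)$ and $S_{\rho}$, each curve $\rho\mapsto\theta_{\rho}^{(j)}$ vanishes exactly at the $j$th smallest eigenvalue of $a(\cdot,\cdot)$; by the remark following \cref{lemdiff2}, each such curve is strictly decreasing in $\rho$. Since $\lambda$ is strictly smaller than the second eigenvalue of $a(\cdot,\cdot)$ (by the simplicity assumption), this gives $\theta_{\lambda}^{(3)}\geq\theta_{\lambda}^{(2)}>0$.

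Finally, the continuity of the map $(x,y)\mapsto x/y$ at points with $y>0$ yields
\begin{equation*}
    c_{s}^{h}=\frac{(\theta_{\lambda}^{h})^{(2)}}{(\theta_{\lambda}^{h})^{(3)}}\;\longrightarrow\;\frac{\theta_{\lambda}^{(2)}}{\theta_{\lambda}^{(3)}}=c_{s},
\end{equation*}
which is exactly the claim. The only mild obstacle I expect is the bookkeeping needed to justify that \cref{lemeigschurest} is applicable at the continuous eigenvalue $\rho=\lambda$ (rather than at some $\rho\geq\lambda^{h}$), but this amounts only to checking that the uniform-in-$h$ estimates in that proof depend on $\rho$ only through $\rho\leq\rho_{0}<\alpha$, which is indeed the case.
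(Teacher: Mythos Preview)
Your proposal is correct and matches the paper's intent: the paper states this result as a corollary of \cref{lemeigschurest} without giving a separate proof, so the intended argument is precisely the one you supply---pass to the limit in each eigenvalue via the lemma, note that $\theta_{\lambda}^{(3)}\geq\theta_{\lambda}^{(2)}>0$, and invoke continuity of the quotient. Your remark that the hypothesis $\lambda^{h}\leq\rho$ in \cref{lemeigschurest} can be relaxed to $\lambda\leq\rho$ is also correct (the proof only uses $\rho\leq\rho_{0}<\alpha$, and \cref{lapext} is already stated for $\lambda\leq\rho$), so no genuine obstacle arises.
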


    Now we can give the convergence factor for Algorithm~\ref{algo}.
    \begin{theorem}
        \label{mainthm}
        Suppose that the coarse mesh size $H$ is small enough, there exists a constant $C$ independent of $h$ and $H$ such that
        \begin{equation*}
            \epsilon_{N}\leq C\epsilon^{2},
        \end{equation*}
        where $\epsilon$ and $\epsilon_{N}$ are errors of the eigenvalue before and after one iteration.
    \end{theorem}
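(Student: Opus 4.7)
The strategy is to apply \cref{cor2} within the abstract framework of \cref{defspace} specialized to the discrete spaces $V^{h}, V_{I}^{h}, V_{\Gamma}^{h}, V_{B,\rho}^{h}$, and then to bound each factor appearing in the abstract convergence constant by the FEM lemmas proved earlier in this section. In this identification the roles played by $\lambda, \widehat{\lambda}, \alpha$ in the abstract setting are taken by $\lambda^{h}, \widehat{\lambda}^{h}$ and the coercivity constant $\alpha$ of $a(\cdot,\cdot)$ on $V_{I}^{h}$, and the initial point is $\rho_{0}=\lambda^{H}$.

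First, I would verify that the hypotheses of \cref{cor2} are satisfied for $H$ sufficiently small. By \cref{lapcoer}, $\alpha\gtrsim H^{-2}$; by \cref{lapeigest}, $\widehat{\lambda}^{h}\to\widehat{\lambda}$ and $\lambda^{H}\to\lambda$, so $\widehat{\lambda}^{h}<\alpha$ for small $H$. By \eqref{gaphH} and \cref{lapeigest}, $\rho_{0}-\lambda^{h}\lesssim H^{2}$, and since $\widehat{\lambda}^{h}-\lambda^{H}\to\widehat{\lambda}-\lambda>0$, we have $\rho_{0}\in(\lambda^{h},\widehat{\lambda}^{h})$ for small $H$. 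The remaining hypothesis is a discrete version of \eqref{hiltrace}: I must produce $c_{t}>0$ such that $a(\HE_{\lambda^{h}}^{h}u^{h},\HE_{\lambda^{h}}^{h}u^{h})\geq c_{t}\norm{u^{h}}_{\Gamma}^{2}$ for all $u^{h}\in V_{\Gamma}^{h}$. This is obtained from \cref{minevalS}: $H\norm{u^{h}}_{\Gamma}^{2}\lesssim a_{\lambda^{h}}(\HE_{\lambda^{h}}^{h}u^{h},\HE_{\lambda^{h}}^{h}u^{h})$, and since $\lambda^{h}>0$ and $\norm{\HE_{\lambda^{h}}^{h}u^{h}}^{2}\geq 0$ we can drop the negative $L^{2}$ shift to get $c_{t}\gtrsim H$.

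Next, \cref{cor2} yields
\begin{equation*}
\epsilon_{N}\leq \Bigl(\frac{1}{\alpha-\rho_{0}}+\frac{c_{\HE}^{2}\,\widehat{\lambda}^{h}}{c_{s}^{h}\,c_{t}\,(\widehat{\lambda}^{h}-\rho_{0})}\Bigr)\,\epsilon^{2},
\end{equation*}
and I would bound each factor: $(\alpha-\rho_{0})^{-1}\lesssim H^{2}\lesssim 1$ from \cref{lapcoer}; $c_{\HE}^{2}\lesssim H$ from \cref{lapext}; $c_{t}\gtrsim H$ from the previous paragraph; $\widehat{\lambda}^{h}-\rho_{0}=\widehat{\lambda}^{h}-\lambda^{H}\to\widehat{\lambda}-\lambda>0$ is bounded below independently of $h,H$ by \cref{lapeigest}; and $c_{s}^{h}\to c_{s}=\theta_{\lambda}^{(2)}/\theta_{\lambda}^{(3)}>0$ by \cref{estcs} (positivity of $c_{s}$ follows from $\theta_{\lambda}^{(2)}>\theta_{\lambda}=0$, since $\lambda$ is simple). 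Substituting these gives
\begin{equation*}
\frac{c_{\HE}^{2}\,\widehat{\lambda}^{h}}{c_{s}^{h}\,c_{t}\,(\widehat{\lambda}^{h}-\rho_{0})}\lesssim \frac{H}{H}\lesssim 1,
\end{equation*}
so the total constant in front of $\epsilon^{2}$ is bounded independently of $h$ and $H$.

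I expect the main obstacle to be the matching $H$-scaling between $c_{\HE}^{2}$ and the discrete $c_{t}$: a naive application of the abstract bound would suggest an $H$-dependent constant, and the cancellation hinges on the lower bound $c_{t}\gtrsim H$ coming from \cref{minevalS} exactly balancing the upper bound $c_{\HE}^{2}\lesssim H$ from \cref{lapext}. A secondary technical point is ensuring that $c_{s}^{h}$ stays uniformly bounded below as $h$ (and $H$) shrink; this uses \cref{estcs} together with the simplicity of $\lambda$ and the fact that $c_{s}$ is a fixed continuous quantity. Once these two uniformity issues are handled, everything else is bookkeeping from the continuity results of \cref{lapeigest}.
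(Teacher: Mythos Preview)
Your proposal is correct and follows essentially the same route as the paper: verify $c_{t}\gtrsim H$ via \cref{minevalS}, control $c_{s}^{h}$ and $\widehat{\lambda}^{h}-\rho_{0}$ via \cref{lapeigest} and \cref{estcs}, and then plug $c_{\HE}^{2}\lesssim H$ (\cref{lapext}) and $(\alpha-\rho_{0})^{-1}\lesssim H^{2}$ (\cref{lapcoer}) into the abstract bound of \cref{cor2}/\cref{main1} so that the $H$'s cancel. One small inaccuracy worth noting: \cref{minevalS} already gives $H\norm{u^{h}}_{\Gamma}^{2}\lesssim\abs{\HE_{\rho}^{h}u^{h}}_{a}^{2}=a(\HE_{\rho}^{h}u^{h},\HE_{\rho}^{h}u^{h})$ directly, so no ``dropping the $L^{2}$ shift'' is needed to obtain $c_{t}\gtrsim H$.
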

    \begin{proof}
        This proof is mainly based on \cref{cor2}. First, some conditions in \cref{lemgapmulti} need to be verified. According to \cref{minevalS}, the constant $c_{t}$ in \cref{lemgapmulti} satisfies $c_{t}\gtrsim H$. Due to \cref{lapeigest,lemgapmulti,estcs}, the constant $c_{g}$ satisfies
        \begin{equation}
            \label{estcg}
            \begin{aligned}
                c_{g}&=\min_{\rho\in(\lambda^{h},\,\rho_{0})}\Big(\theta_{\rho}^{(3)}-\theta_{\rho}^{(2)}\Big)\geq \frac{\widehat{\lambda}^{h}\!-\rho_{0}}{\widehat{\lambda}^{h}}\,c_{s}^{h}c_{t}
                \\&=\frac{\widehat{\lambda}-\lambda+(\widehat{\lambda}^{h}\!-\widehat{\lambda})+(\lambda-\rho_{0})}{\widehat{\lambda}+(\widehat{\lambda}^{h}\!-\widehat{\lambda})}\,c_{s}^{h}c_{t}
                \\&\gtrsim\frac{\widehat{\lambda}-\lambda+h^{2}+H^{2}}{\widehat{\lambda}+h^{2}}\,c_{s}H\gtrsim H.
            \end{aligned}
        \end{equation}
        By using \cref{lapext,main1,lapeigest,lapcoer}, the convergence factor is bounded by
        \begin{equation}
            \label{estthm}
            \frac{\epsilon_{N}}{\epsilon^{2}}\leq \frac{1}{\alpha-\rho_{0}}+\frac{c_{\HE}^{2}}{c_{g}}\lesssim H^{2}+1,
        \end{equation}
        which finishes the proof.
    \end{proof}

    \subsection{A sharper bound for the convergence factor}
     After  carefully checking  the numerical results in next section, we find the estimation in \cref{mainthm} can be further improved.
     In this part, we will give a sharper estimation for the convergence factor by using a special norm $\norm{\cdot}_{\Gamma}$ on $\Gamma$, which is spectral equivalent to $\norm{\cdot}_{L^{2}(\Gamma)}$. Let $V_{\Gamma,\rho}^{h}$ be the $S_{\rho}^{h}$\nobreakdash-orthogonal space of $u_{\rho}^{h}$ in $V_{\Gamma}^{h}$, for all $u^{h}\in V_{\Gamma,\rho}^{h}$, we have
    \begin{equation*}
        \dual{S_{\rho}^{h}u^{h},u^{h}}-\theta_{\rho}^{h}(u^{h},u^{h})_{\Gamma}\gtrsim H\, (u^{h},u^{h})_{\Gamma}
    \end{equation*}
    due to the estimation for $c_{g}$ in \cref{estcg}. In $V_{\Gamma,\rho}^{h}$, the operator $(S_{\rho}^{h}-\theta_{\rho}^{h})^{-1}$ is symmetric positive definite, and it is spectral equivalent to $(S_{0}^{h})^{-1}$, which means
    \begin{equation}
        \label{speq}
        \bigdual{(S_{\rho}^{h}-\theta_{\rho}^{h}I^{h})^{-1}u^{h},u^{h}}\approx
        \bigdual{(S_{0}^{h})^{-1}u^{h},u^{h}}
    \end{equation}
    holds for all $u^{h}\in V_{\Gamma,\rho}^{h}$, where $I^{h}$ is the identity operator in $V_{\Gamma}^{h}$. Now  we need some important results in non-overlapping domain decomposition methods, whose detailed proof can be found in many papers and books, for examples, \cite{Xu1998,Toselli2005,Mathew2008}.
    \begin{proposition}
        \label{ddm}
        For $\Omega\in\mathbb{R}^{d}$, where $d=2$ or $3$, there exists a decomposition
        \begin{equation*}
            V_{\Gamma}^{h}=R_{H}^{\Ttran}V_{\Gamma}^{H}+\sum_{i=1}^{n_{l}}R_{i}^{\Ttran}V_{\Gamma}^{i},
        \end{equation*}
        where $R_{H}^{\Ttran}$ and $R_{i}^{\Ttran}$'s are interpolation operators, such that following properties hold.
        \begin{itemize}
            \item The space $R_{H}^{\Ttran}V_{\Gamma}^{H}$ is a global coarse space, and all functions in $R_{H}^{\Ttran}V_{\Gamma}^{H}$ are linear in coarse meshes. Other spaces, \ie $R_{i}^{\Ttran}V_{\Gamma}^{i}$'s, are local spaces.
            \item Let $M^{h}$ be the preconditioner defined as
            \begin{equation*}
                M^{h}\defi R_{H}^{\Ttran}(R_{H}S_{0}^{h}R_{H}^{\Ttran})^{-1}R_{H}+\sum_{i=1}^{n_{l}}R_{i}^{\Ttran}(R_{i}S_{0}^{h}R_{i}^{\Ttran})^{-1}R_{i},
            \end{equation*}
            then for all $u^{h}\in V_{\Gamma}^{h}$,
            \begin{equation*}
                \dual{S_{0}^{h}u^{h},u^{h}}\lesssim \bigl(1+\ln(H/h)\bigr)^{2}\dual{S_{0}^{h}u^{h},M^{h}S_{0}^{h}u^{h}}.
            \end{equation*}
            \item For all $u^{h}\in V_{\Gamma}$,
            \begin{equation}
                \label{loc}
                \dual{M^{h}u^{h},u^{h}}\lesssim H^{-1}\norm{R_{H}u^{h}}_{*}^{2}+H\norm{u^{h}}_{\Gamma}^{2},
            \end{equation}
            where $\norm{R_{H}u^{h}}_{*}$ is defined as
            \begin{equation*}
                \norm{R_{H}u^{h}}_{*}\defi \sup_{0\neq u_{H}^{h}\in R_{H}^{\Ttran}V_{\Gamma}^{H}}\frac{(u^{h},u_{H}^{h})_{\Gamma}}{\norm{u_{H}^{h}}_{\Gamma}}.
            \end{equation*}
        \end{itemize}
    \end{proposition}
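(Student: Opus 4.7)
The plan is to invoke the standard additive Schwarz / BPS framework for non-overlapping domain decomposition preconditioners on the skeleton $\Gamma$. First I would specify the decomposition explicitly: take $V_{\Gamma}^{H}$ as the coarse trace space so that $R_{H}^{\Ttran}V_{\Gamma}^{H}$ consists of functions that are piecewise linear with respect to the coarse mesh on each coarse face/edge, and take each local space $V_{\Gamma}^{i}$ to be a fine-scale face- or edge-bubble space, obtained by enumerating the geometric objects of the coarse skeleton (faces in 3D, edges in 2D, together with wirebasket/vertex components) and collecting the fine nodal functions supported on the $i$-th object. The set identity $V_{\Gamma}^{h}=R_{H}^{\Ttran}V_{\Gamma}^{H}+\sum_{i}R_{i}^{\Ttran}V_{\Gamma}^{i}$ then follows constructively by splitting any $u^{h}=I_{H}u^{h}+(u^{h}-I_{H}u^{h})$, with $I_{H}$ the coarse interpolant, and distributing the residual over the local objects.

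For the condition-number bound on $M^{h}$, the abstract additive Schwarz lemma reduces the target inequality
\begin{equation*}
    \dual{S_{0}^{h}u^{h},u^{h}}\lesssim \bigl(1+\ln(H/h)\bigr)^{2}\dual{S_{0}^{h}u^{h},M^{h}S_{0}^{h}u^{h}}
\end{equation*}
to two ingredients: a trivial finite-overlap (coloring) upper bound, which holds because the skeleton supports of the local spaces meet only along shared wirebasket objects, and a stable-decomposition estimate for the BPS splitting above, namely
\begin{equation*}
    \dual{S_{0}^{h}u_{H},u_{H}}+\sum_{i=1}^{n_{l}}\dual{S_{0}^{h}R_{i}^{\Ttran}u_{i},R_{i}^{\Ttran}u_{i}}\lesssim \bigl(1+\ln(H/h)\bigr)^{2}\dual{S_{0}^{h}u^{h},u^{h}}.
\end{equation*}
The main obstacle is precisely this stable-decomposition estimate, which is where the logarithmic factor enters. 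Its proof rests on the classical discrete Sobolev inequality $\norm{\theta_{F}u^{h}}_{H^{1/2}(\partial\Omega_{k})}^{2}\lesssim(1+\ln(H/h))^{2}\,(\abs{u^{h}}_{H^{1/2}(\partial\Omega_{k})}^{2}+H^{-1}\norm{u^{h}}_{L^{2}(\partial\Omega_{k})}^{2})$ for nodal face/edge cut-off $\theta_{F}$, combined with the energy equivalence $\dual{S_{0}^{h}u^{h},u^{h}}\approx\sum_{k}\abs{u^{h}}_{H^{1/2}(\partial\Omega_{k})}^{2}$ coming from the $a$-harmonic extension characterization of $S_{0}^{h}$ and the scaling argument on each subdomain of diameter $H$. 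The technical subtlety is that the wirebasket/vertex pieces must be kept on the coarse space so that they do not worsen the log factor; this is the delicate part treated in \cite{Toselli2005,Mathew2008,Xu1998}.

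Finally, the local estimate \eqref{loc} is proved by bounding the two contributions of $\dual{M^{h}u^{h},u^{h}}$ separately. For the coarse part, the duality identity $\dual{(R_{H}S_{0}^{h}R_{H}^{\Ttran})^{-1}w,w}=\sup_{v_{H}}\dual{w,v_{H}}^{2}/\dual{S_{0}^{h}R_{H}^{\Ttran}v_{H},R_{H}^{\Ttran}v_{H}}$ combined with the coarse-grid coercivity $\dual{S_{0}^{h}R_{H}^{\Ttran}v_{H},R_{H}^{\Ttran}v_{H}}\gtrsim H^{-1}\norm{v_{H}}_{\Gamma}^{2}$, obtained from standard trace and extension theorems on a coarse subdomain, yields the $H^{-1}\norm{R_{H}u^{h}}_{*}^{2}$ term by the very definition of $\norm{\cdot}_{*}$. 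For the local part, the same duality, together with the reversed Poincar\'e-type estimate $\dual{S_{0}^{h}R_{i}^{\Ttran}v_{i},R_{i}^{\Ttran}v_{i}}\gtrsim H^{-1}\norm{v_{i}}_{\Gamma,i}^{2}$ on each face/edge space, gives a per-object bound of order $H\norm{u^{h}|_{i}}_{\Gamma,i}^{2}$; summing over the $\mathcal{O}(H^{-(d-1)})$ local objects and using that each point of $\Gamma$ is covered uniformly boundedly many times produces the $H\norm{u^{h}}_{\Gamma}^{2}$ term and completes the proof.
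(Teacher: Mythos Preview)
The paper does not prove this proposition; it records it as a known result from non-overlapping domain decomposition theory, citing \cite{Xu1998,Toselli2005,Mathew2008}, and adds only the one-line remark that estimate \eqref{loc} ``can be obtained by the Poincar\'e inequality and scaling arguments when $R_{i}^{\Ttran}V_{\Gamma}^{i}$'s are local spaces with diameters no more than $H$.'' Your outline is therefore not competing with a proof in the paper but reconstructing the standard BPS/additive Schwarz argument in those references, and the structure you give (explicit coarse-plus-local splitting via $I_{H}$, abstract Schwarz lemma, stable decomposition through the discrete Sobolev/face cut-off inequality, and Poincar\'e-plus-scaling for \eqref{loc}) is exactly the route taken there.

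There is one slip in your treatment of the coarse contribution to \eqref{loc}. You claim the coarse-grid coercivity
\[
\dual{S_{0}^{h}R_{H}^{\Ttran}v_{H},R_{H}^{\Ttran}v_{H}}\gtrsim H^{-1}\norm{v_{H}}_{\Gamma}^{2},
\]
but the correct constant is $H$, not $H^{-1}$: this is precisely \cref{minevalS} applied to coarse traces (the smallest eigenvalue of $S_{0}^{h}$ on any subspace of $V_{\Gamma}^{h}$ scales like $H$, not $H^{-1}$). With coercivity constant $c$ your duality identity gives $\bigdual{(R_{H}S_{0}^{h}R_{H}^{\Ttran})^{-1}R_{H}u^{h},R_{H}u^{h}}\le c^{-1}\norm{R_{H}u^{h}}_{*}^{2}$, so it is $c=H$ that yields the stated $H^{-1}\norm{R_{H}u^{h}}_{*}^{2}$; your $c=H^{-1}$ would produce $H\norm{R_{H}u^{h}}_{*}^{2}$ instead, contradicting your own conclusion. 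For the local pieces your constant $H^{-1}$ is correct, since functions in $R_{i}^{\Ttran}V_{\Gamma}^{i}$ vanish at the boundary of a patch of diameter $O(H)$ and the Poincar\'e inequality applies---this is exactly what the paper's remark invokes.
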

    \begin{remark}
        The estimation \cref{loc} can be obtained by the Poincar\'e inequality and scaling arguments when $R_{i}^{\Ttran}V_{\Gamma}^{i}$'s are local spaces with diameters no more than $H$.
    \end{remark}

    Let $Q_{H}$ be the $L^{2}(\Gamma)$ projection from $V_{\Gamma}^{h}$ to $R_{H}^{\Ttran}V_{\Gamma}^{H}$, \ie
    \begin{equation}
        \label{defQ}
        (Q_{H}u^{h},u_{H}^{h})_{L^{2}(\Gamma)}=(u^{h},u_{H}^{h})_{L^{2}(\Gamma)}\quad \forall\, u_{H}^{h}\in R_{H}^{\Ttran}V_{\Gamma}^{H}.
    \end{equation}
    From the definition we know
    \begin{equation}
        \label{defRhQ}
        \norm{R_{H}r_{\rho}^{h}}_{*}=\sup_{0\neq u_{H}^{h}\in R_{H}^{\Ttran}V_{\Gamma}^{H}}\frac{(r_{\rho}^{h},u_{H}^{h})_{\Gamma}}{\norm{u_{H}^{h}}_{\Gamma}}=\norm{Q_{H}r_{\rho}^{h}}_{\Gamma}.
    \end{equation}
    Now, we can define a bilinear form $(\cdot,\cdot)_{\Gamma}$ as follows:
    \begin{equation}
        \label{defgamma}
        (u_{1}^{h},u_{2}^{h})_{\Gamma}\defi H^{-1}(\HE_{0}^{h}Q_{H}u_{1}^{h},\HE_{0}^{h}Q_{H}u_{2}^{h})+(u_{1}^{h}-Q_{H}u_{1}^{h},u_{2}^{h}-Q_{H}u_{2}^{h})_{L^{2}(\Gamma)}.
    \end{equation}
    It can be verified that $(\cdot,\cdot)_{\Gamma}$ is an inner product on $V_{\Gamma}^{h}$. Moreover, $Q_{H}$ is also a $(\cdot,\cdot)_{\Gamma}$\nobreakdash - projection, and
    \begin{equation}
        \label{eqnorm}
        H^{1/2}\norm{Q_{H}u^{h}}_{\Gamma}=\norm{\HE_{0}^{h}Q_{H}u^{h}}
    \end{equation}
    holds for all $u^{h}\in V_{\Gamma}^{h}$, where $\norm{\cdot}_{\Gamma}$ is the norm corresponding to $(\cdot,\cdot)_{\Gamma}$.
    \begin{lemma}
        For all $u^{h}\in V_{\Gamma}^{h}$, $\norm{u^{h}}_{\Gamma}\approx\norm{u^{h}}_{L^{2}(\Gamma)}$.
    \end{lemma}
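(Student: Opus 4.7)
The plan is to exploit that $Q_H$ is the $L^2(\Gamma)$ orthogonal projection onto $R_H^\Ttran V_\Gamma^H$, so
\begin{equation*}
    \norm{u^h}_{L^2(\Gamma)}^2 = \norm{Q_H u^h}_{L^2(\Gamma)}^2 + \norm{u^h - Q_H u^h}_{L^2(\Gamma)}^2.
\end{equation*}
Comparing with \cref{defgamma}, the equivalence $\norm{u^h}_\Gamma \approx \norm{u^h}_{L^2(\Gamma)}$ reduces to proving that for every coarse function $w^H = Q_H u^h \in R_H^\Ttran V_\Gamma^H$, one has
\begin{equation*}
    \norm{\HE_0^h w^H}^2 \approx H \norm{w^H}_{L^2(\Gamma)}^2,
\end{equation*}
i.e. the two coarse contributions on either side are spectrally equivalent. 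The fine component $u^h - Q_H u^h$ contributes identically on both sides by construction.

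The upper bound is immediate from \cref{H1/2}: applying it on each subdomain $\Omega_k$ and summing gives $\norm{\HE_0^h w^H}_{L^2(\Omega_k)}^2 \lesssim H\,\norm{w^H}_{L^2(\partial\Omega_k)}^2$, hence $\norm{\HE_0^h w^H}^2 \lesssim H\,\norm{w^H}_{L^2(\Gamma)}^2$.

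For the lower bound I would rely on the finite-dimensional nature of the coarse trace space together with a scaling argument. On a reference subdomain $\hat\Omega_k$ of unit diameter, the space of traces that are piecewise linear on the coarse mesh of $\partial\hat\Omega_k$ has fixed dimension, and the continuous $a$\nobreakdash-harmonic extension $\HE_0$ is injective on it; hence all norms are equivalent there, in particular $\norm{\hat w^H}_{L^2(\partial\hat\Omega_k)} \approx \norm{\HE_0 \hat w^H}_{L^2(\hat\Omega_k)}$. Rescaling to $\Omega_k$ yields $H\,\norm{w^H}_{L^2(\partial\Omega_k)}^2 \approx \norm{\HE_0 w^H}_{L^2(\Omega_k)}^2$. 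Passing from the continuous extension $\HE_0$ to the discrete one $\HE_0^h$ costs a term controlled by the finite-element estimate $\norm{\HE_0 w^H - \HE_0^h w^H}_{L^2(\Omega_k)} \lesssim h\,\norm{w^H}_{H^{1/2}(\partial\Omega_k)}$ that already appeared in the proof of \cref{H1/2}; combined with the inverse inequality $\norm{w^H}_{H^{1/2}(\partial\Omega_k)} \lesssim H^{-1/2}\norm{w^H}_{L^2(\partial\Omega_k)}$ on the finite-dimensional coarse space, this discretization error is bounded by $(h/H)\cdot H^{1/2}\norm{w^H}_{L^2(\partial\Omega_k)}$ and can be absorbed into the main term once $h \leq cH$. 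Summing over subdomains then gives $\norm{\HE_0^h w^H}^2 \gtrsim H\,\norm{w^H}_{L^2(\Gamma)}^2$.

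The main obstacle is the lower bound, and in particular verifying that the constants in the reference-domain equivalence are genuinely independent of both $h$ and $H$ after the scaling and the absorption of $\HE_0^h - \HE_0$. This rests on the coarse trace space having uniformly bounded dimension per subdomain and on the standard inverse estimate at the coarse scale; the upper bound and the decomposition via $Q_H$ are comparatively routine.
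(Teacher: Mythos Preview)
Your reduction via the $L^{2}(\Gamma)$-orthogonality of $Q_{H}$ and your upper bound coincide with the paper's. The difference lies in the lower bound $H\norm{w^{H}}_{L^{2}(\Gamma)}^{2}\lesssim\norm{\HE_{0}^{h}w^{H}}^{2}$ for $w^{H}\in R_{H}^{\Ttran}V_{\Gamma}^{H}$.

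The paper avoids the reference-domain finite-dimensional equivalence entirely. Instead it applies the multiplicative trace inequality (Theorem~1.6.6 in \cite{Brenner2008}) directly to $\HE_{0}^{h}w^{H}$ on each subdomain,
\begin{equation*}
    \norm{w^{H}}_{L^{2}(\partial\Omega_{k})}^{2}\lesssim \norm{\HE_{0}^{h}w^{H}}_{L^{2}(\Omega_{k})}\,\norm{\HE_{0}^{h}w^{H}}_{H^{1}(\Omega_{k})},
\end{equation*}
sums via Cauchy--Schwarz, and then controls $\abs{\HE_{0}^{h}w^{H}}_{H^{1}}^{2}$ by $\sum_{k}\abs{w^{H}}_{H^{1/2}(\partial\Omega_{k})}^{2}\lesssim H^{-1}\norm{w^{H}}_{L^{2}(\Gamma)}^{2}$ using \cref{minrq}, the extension theorem, and the coarse inverse estimate. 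This closes the loop with one self-referential inequality and never passes through the continuous extension $\HE_{0}$.

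Your route is sound in outline, but the finite-dimensional equivalence $\norm{\hat w^{H}}_{L^{2}(\partial\hat\Omega_{k})}\approx\norm{\HE_{0}\hat w^{H}}_{L^{2}(\hat\Omega_{k})}$ carries hidden dependence on the shape of $\hat\Omega_{k}$ and on the pulled-back coefficients of $a(\cdot,\cdot)$, so uniformity in $k$ is not automatic from ``fixed dimension'' alone; you would need shape regularity together with either a finite reference-shape catalogue or an explicit bound depending only on ellipticity constants. You also incur the extra step of absorbing $\HE_{0}-\HE_{0}^{h}$. The paper's multiplicative-trace argument sidesteps both issues: it works directly with $\HE_{0}^{h}$, and the only place the coarse structure enters is through the inverse estimate $\abs{w^{H}}_{H^{1/2}(\partial\Omega_{k})}\lesssim H^{-1/2}\norm{w^{H}}_{L^{2}(\partial\Omega_{k})}$, whose constant is uniform by standard arguments. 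Your approach buys a more elementary ``compactness'' intuition at the cost of these additional checks; the paper's is shorter and requires less to verify.
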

    \begin{proof}
        By \cref{defgamma}, it is sufficient to prove that for all $u_{H}^{h}\in R_{H}^{\Ttran}V_{\Gamma}^{h}$,
        \begin{equation*}
            \norm{u_{H}^{h}}_{L^{2}(\Gamma)}\approx \norm{u_{H}^{h}}_{\Gamma}=H^{-1/2}\norm{\HE_{0}^{h}u_{H}^{h}}.
        \end{equation*}
        On the one hand, by \cref{lapext}, we have
        \begin{equation*}
            \norm{\HE_{0}^{h}u_{H}^{h}} \lesssim H^{1/2}\norm{u_{H}^{h}}_{L^{2}(\Gamma)}.
        \end{equation*}
        On the other hand, according to Theorem~1.6.6 of \cite{Brenner2008} and the Cauchy-Schwarz inequality,
        \begin{equation*}
            \begin{aligned}
                \norm{u_{H}^{h}}_{L^{2}(\Gamma)}^{2}&\lesssim \sum_{k=1}^{N}\norm{\HE_{0}^{h}u_{H}^{h}}_{L^{2}(\Omega_{k})}\norm{\HE_{0}^{h}u_{H}^{h}}_{H^{1}(\Omega_{k})}\\
                &\leq \norm{\HE_{0}^{h}u_{H}^{h}}(\abs{\HE_{0}^{h}u_{H}^{h}}_{H^{1}}^{2}+H^{-2}\norm{\HE_{0}^{h}u_{H}^{h}}^{2})^{1/2}.
            \end{aligned}
        \end{equation*}
        By the inverse estimation, \cref{minrq,eqh1norm},
        \begin{equation*}
            \abs{\HE_{0}^{h}u_{H}^{h}}_{H^{1}}^{2} \lesssim \sum_{k=1}^{N}\abs{u_{H}^{h}}_{H^{1/2}(\partial\Omega_{k})}^{2} \lesssim H^{-1}\norm{u_{H}^{h}}_{L^{2}(\Gamma)}^{2}.
        \end{equation*}
        Combining these two inequalities above, we have
        \begin{equation*}
            \norm{u_{H}^{h}}_{L^{2}(\Gamma)}\lesssim H^{-1/2}\norm{\HE_{0}^{h}u_{H}^{h}},
        \end{equation*}
        which finishes the proof.
    \end{proof}

    \begin{lemma}
        \label{PiH}
        Let $\Pi_{H}$ denote the interpolation operator associated with the coarse space $R_{H}^{\Ttran}V_{\Gamma}^{H}$, for all $\lambda^{h}\leq\rho\leq\rho_{0}$,
        \begin{equation*}
            \bigabs{\norm{\Pi_{H}u_{\rho}^{h}}_{\Gamma}-\norm{u_{\rho}^{h}}_{\Gamma}}\lesssim H\norm{u_{\rho}^{h}}_{\Gamma}.
        \end{equation*}
    \end{lemma}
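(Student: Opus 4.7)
The plan is to start with the reverse triangle inequality $\bigabs{\norm{\Pi_H u_\rho^h}_\Gamma - \norm{u_\rho^h}_\Gamma} \leq \norm{(I - \Pi_H)u_\rho^h}_\Gamma$, so the task reduces to showing $\norm{(I - \Pi_H)u_\rho^h}_\Gamma \lesssim H\,\norm{u_\rho^h}_\Gamma$. Since $\Pi_H u_\rho^h$ lies in $R_H^{\Ttran}V_\Gamma^H$, applying $Q_H$ fixes it and applying $I - Q_H$ kills it. Using the definition \eqref{defgamma} of $(\cdot,\cdot)_\Gamma$, this yields the orthogonal split
\begin{equation*}
    \norm{(I - \Pi_H) u_\rho^h}_\Gamma^2 = H^{-1}\bignorm{\HE_0^h(Q_H u_\rho^h - \Pi_H u_\rho^h)}^2 + \bignorm{(I - Q_H) u_\rho^h}_{L^2(\Gamma)}^2.
\end{equation*}

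Next, I would control both pieces by the single quantity $\norm{(I - \Pi_H) u_\rho^h}_{L^2(\Gamma)}$. For the first term, $Q_H u_\rho^h - \Pi_H u_\rho^h$ is a coarse function, so \cref{lapext} with $\rho = 0$ gives $\norm{\HE_0^h(Q_H u_\rho^h - \Pi_H u_\rho^h)} \lesssim H^{1/2}\norm{Q_H u_\rho^h - \Pi_H u_\rho^h}_{L^2(\Gamma)}$, and the $L^2(\Gamma)$-optimality of $Q_H$ yields $\norm{Q_H u_\rho^h - \Pi_H u_\rho^h}_{L^2(\Gamma)} \leq 2\norm{(I - \Pi_H) u_\rho^h}_{L^2(\Gamma)}$. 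For the second term, again by $L^2(\Gamma)$-optimality of $Q_H$, $\norm{(I-Q_H) u_\rho^h}_{L^2(\Gamma)} \leq \norm{(I - \Pi_H) u_\rho^h}_{L^2(\Gamma)}$. Combined with the preceding lemma $\norm{\cdot}_\Gamma \approx \norm{\cdot}_{L^2(\Gamma)}$, the whole statement reduces to proving $\norm{(I - \Pi_H) u_\rho^h}_{L^2(\Gamma)} \lesssim H\,\norm{u_\rho^h}_{L^2(\Gamma)}$.

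To prove this interpolation estimate, I would transfer regularity from the continuous side. Because $\Omega$ is convex polygonal and the coefficients $a_{ij}$ are Lipschitz, the continuous eigenfunction $v_\lambda$ lies in $H^2(\Omega)$, hence $u_\lambda = \Tr(v_\lambda) \in H^{3/2}(\Gamma)$, and standard coarse interpolation gives $\norm{(I - \Pi_H) u_\lambda}_{L^2(\Gamma)} \lesssim H^{3/2}\abs{u_\lambda}_{H^{3/2}(\Gamma)}$. For the perturbation $u_\rho^h - u_\lambda$, one can use $L^2$-stability of $I - \Pi_H$ together with Babu\v ska--Osborn type convergence of $u_\rho^h$ to $u_\lambda$ on $\Gamma$ (which is controlled by $h$ and by $\rho - \lambda \lesssim H^2$ through Lemma~\ref{continuity}) and with \cref{eqevec} giving $\norm{u_\rho^h}_\Gamma \approx H^{-1/2}\norm{\HE_\rho^h u_\rho^h}$ for the normalization, so that the $h$-order error is absorbed into an $H$-order bound relative to $\norm{u_\rho^h}_{L^2(\Gamma)}$.

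The main obstacle will be the perturbation step: tracking the powers of $h$, $H$ and $\rho - \lambda$ through the finite-element and Steklov convergence to conclude $\norm{(I - \Pi_H)(u_\rho^h - u_\lambda)}_{L^2(\Gamma)} \lesssim H\,\norm{u_\rho^h}_{L^2(\Gamma)}$ uniformly for $\rho \in [\lambda^h, \rho_0]$. One must also handle the fact that $u_\rho^h$ is piecewise linear only on the fine mesh $\mathcal{T}^h$, so bounds on its intrinsic $H^{3/2}(\Gamma)$-seminorm are not direct and must be routed through the trace of the extension $\HE_\rho^h u_\rho^h$, using the trace theorem on each subdomain and Aubin--Nitsche-style duality as in the earlier lemmas.
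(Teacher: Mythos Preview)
Your reduction via the reverse triangle inequality and the $Q_H$ split is valid but already more than the paper uses; since the preceding lemma gives $\norm{\cdot}_{\Gamma}\approx\norm{\cdot}_{L^{2}(\Gamma)}$, one can work directly with $\norm{(I-\Pi_H)u_\rho^h}_{L^2(\Gamma)}$ without decomposing through $Q_H$. The real divergence is in how you estimate this quantity.

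The paper does not go through $H^{3/2}$ regularity of the continuous eigenfunction at all. Instead it uses only the $H^{1/2}$-level interpolation estimate
\[
\norm{(I-\Pi_H)u_\rho^h}_{\Gamma}^{2}\lesssim H\sum_{k=1}^{N}\abs{u_\rho^h}_{H^{1/2}(\partial\Omega_k)}^{2},
\]
then the trace theorem to pass to $\abs{\HE_\rho^h u_\rho^h}_{H^{1}(\Omega)}^{2}$, and finally the eigenvector identity: since $(\theta_\rho^h,u_\rho^h)$ is the smallest eigenpair of $S_\rho^h$ with $\theta_\rho^h\le 0$, one has
\[
\abs{\HE_\rho^h u_\rho^h}_{a}^{2}=\theta_\rho^h\norm{u_\rho^h}_{\Gamma}^{2}+\rho\,\norm{\HE_\rho^h u_\rho^h}^{2}\le \rho\,\norm{\HE_\rho^h u_\rho^h}^{2}\lesssim H\,\norm{u_\rho^h}_{\Gamma}^{2}
\]
by \cref{lapext}. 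Multiplying the two factors of $H^{1/2}$ gives exactly the $H$ bound.

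This is the idea you are missing: the discrete eigenvector $u_\rho^h$ already carries enough intrinsic regularity ($H^{1/2}_*$ on $\Gamma$, with seminorm $\lesssim H^{1/2}\norm{u_\rho^h}_\Gamma$) precisely because $\theta_\rho^h\le 0$ forces the energy of its harmonic extension to be small. No appeal to the continuous eigenfunction, no Babu\v ska--Osborn eigenfunction convergence, and no perturbation in $\rho$ are required. Your proposed route through $u_\lambda\in H^{3/2}(\Gamma)$ and then a perturbation $u_\rho^h-u_\lambda$ is in principle plausible, but it imports eigen\emph{function} convergence for the Steklov problem, which is nowhere established in the paper (\cref{lemeigschurest} gives only eigen\emph{value} convergence), and it forces you to track constants through two limits ($h\to 0$ and $\rho\to\lambda$) uniformly. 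That is the ``main obstacle'' you flag, and it is entirely self-inflicted: dropping to the $H^{1/2}$ interpolation estimate makes it disappear.
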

    \begin{proof}
        By the estimation of interpolation and the trace theorem, we have
        \begin{equation*}
            \bigabs{\norm{\Pi_{H}u_{\rho}^{h}}_{\Gamma}-\norm{u_{\rho}^{h}}_{\Gamma}}^{2}\leq
            \norm{\Pi_{H}u_{\rho}^{h}-u_{\rho}^{h}}_{\Gamma}^{2}\lesssim
            H\sum_{k=1}^{N}\abs{u_{\rho}^{h}}_{H^{1/2}(\partial\Omega_{k})}^{2}
            \lesssim H\,\abs{\HE_{\rho}^{h}u_{\rho}^{h}}_{H^{1}(\Omega)}^{2}.
        \end{equation*}
        According to \cref{eqh1norm,lapext},
        \begin{equation*}
            \abs{\HE_{\rho}^{h}u_{\rho}^{h}}_{H^{1}}^{2}\lesssim \abs{\HE_{\rho}^{h}u_{\rho}^{h}}_{a}^{2}=\theta_{\rho}^{h}\norm{u_{\rho}^{h}}_{\Gamma}^{2}+\rho\norm{\HE_{\rho}^{h}u_{\rho}^{h}}^{2}\lesssim H\,\norm{u_{\rho}^{h}}_{\Gamma}^{2}.
        \end{equation*}
        The lemma is proved by combining these two inequalities above.
    \end{proof}
    \begin{corollary}
        \label{Qurho}
        Since $Q_{H}$ is a projection, $\bigabs{\norm{Q_{H}u_{\rho}^{h}}_{\Gamma}-\norm{u_{\rho}^{h}}_{\Gamma}}\lesssim H\norm{u_{\rho}^{h}}_{\Gamma}$.
    \end{corollary}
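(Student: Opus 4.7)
The plan is a short reduction to \cref{PiH}. First I would apply the reverse triangle inequality to obtain
\[
    \bigabs{\norm{Q_{H}u_{\rho}^{h}}_{\Gamma}-\norm{u_{\rho}^{h}}_{\Gamma}}\leq \norm{Q_{H}u_{\rho}^{h}-u_{\rho}^{h}}_{\Gamma},
\]
which reduces the corollary to an estimate on the $(\cdot,\cdot)_{\Gamma}$-distance from $u_{\rho}^{h}$ to the coarse space $R_{H}^{\Ttran}V_{\Gamma}^{H}$.

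Next I would exploit the fact, already asserted in the text immediately after the definition \cref{defgamma}, that $Q_{H}$ is also the orthogonal projection onto $R_{H}^{\Ttran}V_{\Gamma}^{H}$ with respect to $(\cdot,\cdot)_{\Gamma}$ itself. Hence $Q_{H}u_{\rho}^{h}$ is the $\norm{\cdot}_{\Gamma}$-best approximation of $u_{\rho}^{h}$ from $R_{H}^{\Ttran}V_{\Gamma}^{H}$. Since $\Pi_{H}u_{\rho}^{h}$ also lies in that coarse space, this gives the comparison
\[
    \norm{Q_{H}u_{\rho}^{h}-u_{\rho}^{h}}_{\Gamma}\leq \norm{\Pi_{H}u_{\rho}^{h}-u_{\rho}^{h}}_{\Gamma}.
\]

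Finally, I would invoke the intermediate bound that already appears inside the proof of \cref{PiH}, namely $\norm{\Pi_{H}u_{\rho}^{h}-u_{\rho}^{h}}_{\Gamma}^{2}\lesssim H\sum_{k=1}^{N}\abs{u_{\rho}^{h}}_{H^{1/2}(\partial\Omega_{k})}^{2}\lesssim H^{2}\norm{u_{\rho}^{h}}_{\Gamma}^{2}$, which yields $\norm{\Pi_{H}u_{\rho}^{h}-u_{\rho}^{h}}_{\Gamma}\lesssim H\norm{u_{\rho}^{h}}_{\Gamma}$. Chaining the three inequalities produces the claim.

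No real obstacle is expected. The only subtlety worth double-checking is that $Q_{H}$ really is a $(\cdot,\cdot)_{\Gamma}$-orthogonal projection, but this is immediate from \cref{defgamma}: for a test function $u_{H}^{h}\in R_{H}^{\Ttran}V_{\Gamma}^{H}$ one has $Q_{H}u_{H}^{h}=u_{H}^{h}$ by definition \cref{defQ}, so the second summand of $(\cdot,\cdot)_{\Gamma}$ vanishes against $u_{H}^{h}$, while the first summand reduces to $H^{-1}(\HE_{0}^{h}Q_{H}u_{1}^{h},\HE_{0}^{h}u_{H}^{h})$, matching the defining property of the $L^{2}(\Gamma)$-projection $Q_{H}$ in $(u_{1}^{h},u_{H}^{h})_{\Gamma}$.
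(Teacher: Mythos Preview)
Your proposal is correct and matches the paper's intended argument exactly: the paper leaves the corollary unproved beyond the phrase ``Since $Q_{H}$ is a projection,'' and what you have written is precisely the unpacking of that phrase---reverse triangle inequality, best-approximation property of the $(\cdot,\cdot)_{\Gamma}$-orthogonal projection $Q_{H}$, and then the interpolation estimate $\norm{\Pi_{H}u_{\rho}^{h}-u_{\rho}^{h}}_{\Gamma}\lesssim H\norm{u_{\rho}^{h}}_{\Gamma}$ already obtained inside the proof of \cref{PiH}. Your verification that $Q_{H}$ is $(\cdot,\cdot)_{\Gamma}$-orthogonal is also the right computation (both summands in \cref{defgamma} vanish for $u^{h}-Q_{H}u^{h}$ against coarse test functions because $Q_{H}$ is idempotent and fixes $R_{H}^{\Ttran}V_{\Gamma}^{H}$).
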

    \begin{lemma}
        \label{sharp}
        By using notations in \cref{main1}, in the finite element space,
        \begin{equation*}
            \bigdual{(S_{\rho}^{h}-\theta_{\rho}^{h}I^{h})(u_{\rho}^{h})^{\prime},(u_{\rho}^{h})^{\prime}} \lesssim H^{2}\bigl(1+\ln(H/h)\bigr)^{2} \bignorm{v_{\rho}^{h}}^{2}
        \end{equation*}
        holds for all $\lambda^{h}\leq\rho\leq \rho_{0}$, where $v_{\rho}^{h}=\HE_{\rho}^{h}u_{\rho}^{h}$ and $\norm{u_{\rho}^{h}}_{\Gamma}=1$.
    \end{lemma}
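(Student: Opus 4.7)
The plan is to establish the bound by duality, passing through the spectral equivalence \cref{speq} to the operator $S_0^h$ and then exploiting the additive Schwarz preconditioner $M^h$ of Proposition~\ref{ddm} together with the localization estimate \cref{loc}. The first step is to differentiate the eigenvalue equation \cref{feschureig} exactly as in \cref{diff1}, producing a residual functional $f^h\in(V_\Gamma^h)'$ with $(S_\rho^h-\theta_\rho^h I^h)(u_\rho^h)'=f^h$ and
\[
\dual{f^h,u}=(v_\rho^h,\HE_\rho^h u)+(\theta_\rho^h)'(u_\rho^h,u)_\Gamma\qquad\forall\,u\in V_\Gamma^h.
\]
Testing with $u=u_\rho^h$ and using the symmetry of $S_\rho^h$ together with $(u_\rho^h,(u_\rho^h)')_\Gamma=0$ gives the orthogonality $\dual{f^h,u_\rho^h}=0$. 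Since $(u_\rho^h)'\in V_{\Gamma,\rho}^h$, combining \cref{speq}, Proposition~\ref{ddm}, and \cref{loc} chains to
\[
\dual{(S_\rho^h-\theta_\rho^h I^h)(u_\rho^h)',(u_\rho^h)'}\lesssim(1+\ln(H/h))^2\bigl(H^{-1}\|R_Hf^h\|_*^2+H\|f^h\|_\Gamma^2\bigr).
\]

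Next, the local contribution $H\|f^h\|_\Gamma^2$ is controlled by Cauchy-Schwarz with Lemma~\ref{lapext} ($\|\HE_\rho^h u\|\lesssim H^{1/2}\|u\|_\Gamma$), Lemma~\ref{lemdiff1} ($|(\theta_\rho^h)'|=\|v_\rho^h\|^2$), and \cref{eqevec} ($\|v_\rho^h\|\lesssim H^{1/2}$), which together yield $\|f^h\|_\Gamma\lesssim H^{1/2}\|v_\rho^h\|$ and hence $H\|f^h\|_\Gamma^2\lesssim H^2\|v_\rho^h\|^2$. For the coarse contribution, I would use \cref{defRhQ} and \cref{eqnorm} to write $\|R_Hf^h\|_*^2=H^{-1}\|\HE_0^h Q_H\tilde f^h\|^2$, where $\tilde f^h$ is the $(\cdot,\cdot)_\Gamma$-Riesz representative of $f^h$, and then invoke that $Q_H$ is a $(\cdot,\cdot)_\Gamma$-projection to reach the self-referential identity $\|\HE_0^h Q_H\tilde f^h\|^2=H\,\dual{f^h,Q_H\tilde f^h}$, reducing the problem to an estimate on $\dual{f^h,Q_H\tilde f^h}$.

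The main obstacle is this coarse estimate, where a direct Cauchy-Schwarz yields only $\|\HE_0^h Q_H\tilde f^h\|\lesssim H\|v_\rho^h\|$ and therefore $H^{-1}\|R_Hf^h\|_*^2\lesssim H\|v_\rho^h\|^2$, short of the target $H^2\|v_\rho^h\|^2$ by a factor of $H$. To recover this missing factor I would combine three ingredients: the orthogonality $\dual{f^h,u_\rho^h}=0$ together with Corollary~\ref{Qurho} to show $(u_\rho^h,Q_H\tilde f^h)_\Gamma=(\tilde f^h,Q_Hu_\rho^h-u_\rho^h)_\Gamma=O(H\|\tilde f^h\|_\Gamma)$, damping the $(\theta_\rho^h)'(u_\rho^h,\cdot)_\Gamma$ piece of $f^h$; Lemma~\ref{continuity} to replace $\HE_\rho^h$ by $\HE_0^h$ with an $O(H^2)$ error; and the Newton-Schur identity $a_\rho(v_\rho^h,\HE_\rho^h u)=\theta_\rho^h(u_\rho^h,u)_\Gamma$, combined with the smallness $|\theta_\rho^h|\lesssim H(\rho-\lambda^h)\lesssim H^3$ coming from the concavity used in Lemma~\ref{lemgap}, to trade the $(v_\rho^h,\HE_\rho^h Q_H\tilde f^h)$ term for an expression where the small factor $\theta_\rho^h$ compensates the otherwise unfavorable scaling. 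Chaining these refinements through the self-referential identity should absorb the leading $\|\HE_0^h Q_H\tilde f^h\|$ factor on the right-hand side and yield the sharp bound $\|\HE_0^h Q_H\tilde f^h\|\lesssim H^2\|v_\rho^h\|$, closing the argument.
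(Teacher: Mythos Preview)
Your outline is correct and matches the paper exactly through the reduction to the two terms $H^{-1}\norm{R_{H}f^{h}}_{*}^{2}+H\norm{f^{h}}_{\Gamma}^{2}$, and your treatment of the local term is fine. The genuine gap is in the coarse estimate, where your three proposed ingredients do not close the argument.

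The third ingredient is the critical failure. The identity $a_{\rho}(v_{\rho}^{h},\HE_{\rho}^{h}u)=\theta_{\rho}^{h}(u_{\rho}^{h},u)_{\Gamma}$ lives in the bilinear form $a_{\rho}$, whereas the term you need to control is the $L^{2}$ product $(v_{\rho}^{h},\HE_{\rho}^{h}w)$. Writing $a_{\rho}=a-\rho(\cdot,\cdot)$ you would have to control $a(v_{\rho}^{h},\HE_{\rho}^{h}w)$, but $\abs{v_{\rho}^{h}}_{a}\approx H^{1/2}$ while for a coarse function $w$ one only has $\abs{\HE_{\rho}^{h}w}_{a}\lesssim H^{-1/2}\norm{w}_{\Gamma}$; the resulting bound $a(v_{\rho}^{h},\HE_{\rho}^{h}w)=O(\norm{w}_{\Gamma})$ is useless, and the smallness of $\theta_{\rho}^{h}$ never enters. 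More fundamentally, your first ingredient bounds the $(\theta_{\rho}^{h})'(u_{\rho}^{h},\cdot)_{\Gamma}$ piece \emph{separately}, and this destroys the mechanism that actually works: the two pieces of $f^{h}$ individually have coarse components of size $O(H)$ and the required $O(H^{2})$ bound comes from their cancellation, not from bounding either one alone.

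The paper exploits this cancellation directly. The special inner product \eqref{defgamma} is engineered so that on the coarse space the Riesz representative of $-(S_{0}^{h})'$ is multiplication by $H$; hence after replacing $(S_{\rho}^{h})'$ by $(S_{0}^{h})'$ and $u_{\rho}^{h}$ by $Q_{H}u_{\rho}^{h}$ (both at cost $O(H^{2})$, as you correctly note via \cref{continuity} and \cref{Qurho}), the coarse part of $r_{\rho}^{h}$ becomes $((\theta_{\rho}^{h})'+H)Q_{H}u_{\rho}^{h}$. The computation in \eqref{sharp5} makes this explicit as a perfect square, and \eqref{sharp6} shows $\bigabs{(\theta_{\rho}^{h})'+H}\lesssim H^{2}$, i.e.\ $\bigabs{\norm{v_{\rho}^{h}}^{2}-H}\lesssim H^{2}$. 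It is this near-equality, valid only because of the choice \eqref{defgamma} and identity \eqref{eqnorm}, that produces the missing factor of $H$. Your self-referential identity $\norm{\HE_{0}^{h}Q_{H}\tilde f^{h}}^{2}=H\dual{f^{h},Q_{H}\tilde f^{h}}$ is a perfectly good starting point, but to finish you must keep the two summands of $\dual{f^{h},w}$ together and recognize that, on the coarse space, $(v_{\rho}^{h},\HE_{0}^{h}w)\approx H(Q_{H}u_{\rho}^{h},w)_{\Gamma}$ so that the combination collapses to $(H-\norm{v_{\rho}^{h}}^{2})(Q_{H}u_{\rho}^{h},w)_{\Gamma}+O(H^{2}\norm{w}_{\Gamma})$.
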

    \begin{proof}
        Let $r_{\rho}^{h}=(\theta_{\rho}^{h})^{\prime}u_{\rho}^{h}-(S_{\rho}^{h})^{\prime}u_{\rho}^{h}$, according to \cref{speq,ddm,defRhQ}, we have
        \begin{equation*}
            \begin{aligned}
            \bigdual{(S_{\rho}^{h}-\theta_{\rho}^{h}I^{h})(u_{\rho}^{h})^{\prime},(u_{\rho}^{h})^{\prime}}
            &=\bigdual{(S_{\rho}^{h}-\theta_{\rho}^{h}I^{h})^{-1}r_{\rho}^{h},r_{\rho}^{h}}
            \approx\bigdual{(S_{0}^{h})^{-1}r_{\rho}^{h},r_{\rho}^{h}}\\
            &=\bigdual{S_{0}^{h}(S_{0}^{h})^{-1}r_{\rho}^{h},(S_{0}^{h})^{-1}r_{\rho}^{h}}
            \lesssim \bigl(1+\ln(H/h)\bigr)^{2}\bigdual{M^{h}r_{\rho}^{h},r_{\rho}^{h}} \\
            &\lesssim  \bigl(1+\ln(H/h)\bigr)^{2} \Bigl(H^{-1}\norm{Q_{H}r_{\rho}^{h}}_{\Gamma}^{2}+H\,\norm{r_{\rho}^{h}}_{\Gamma}^{2}\Bigr).
            \end{aligned}
        \end{equation*}
        By \cref{lemdiff1,defSprime,eqevec},
        \begin{equation*}
            \norm{r_{\rho}^{h}}_{\Gamma}\leq \abs{(\theta_{\rho}^{h})^{\prime}}\norm{u_{\rho}^{h}}_{\Gamma}+\norm{v_{\rho}^{h}}\normm{\HE_{\rho}^{h}}\lesssim H.
        \end{equation*}
        Combining these two inequalities above, we have
        \begin{equation}
            \label{sharp1}
            \bigdual{(S_{\rho}^{h}-\theta_{\rho}^{h}I^{h})(u_{\rho}^{h})^{\prime},(u_{\rho}^{h})^{\prime}} \lesssim
            \bigl(1+\ln(H/h)\bigr)^{2} \Bigl(H^{-1}\norm{Q_{H}r_{\rho}^{h}}_{\Gamma}^{2}+H^{3}\Bigr).
        \end{equation}
        According to \cref{defSprime,lapext,Qurho},
        \begin{equation}
            \label{sharp3}
            \begin{aligned}
                \norm{Q_{H}r_{\rho}^{h}}_{\Gamma}
            &\leq \bignorm{(\theta_{\rho}^{h})^{\prime}Q_{H}u_{\rho}^{h}-Q_{H}(S_{\rho}^{h})^{\prime}Q_{H}u_{\rho}^{h}}_{\Gamma}
            +\bignorm{Q_{H}(S_{\rho}^{h})^{\prime}(u_{\rho}^{h}-Q_{H}u_{\rho}^{h})}_{\Gamma}\\
            &\lesssim \bignorm{(\theta_{\rho}^{h})^{\prime}Q_{H}u_{\rho}^{h}-Q_{H}(S_{\rho}^{h})^{\prime}Q_{H}u_{\rho}^{h}}_{\Gamma}
            + H^{2}.
            \end{aligned}
        \end{equation}
        For the first term, due to \cref{continuity,lapext},
        \begin{equation}
            \label{sharp4}
            \begin{aligned}
                &\bignorm{(\theta_{\rho}^{h})^{\prime}Q_{H}u_{\rho}^{h}-Q_{H}(S_{\rho}^{h})^{\prime}Q_{H}u_{\rho}^{h}}_{\Gamma}\\
                \leq& \bignorm{(\theta_{\rho}^{h})^{\prime}Q_{H}u_{\rho}^{h}-Q_{H}(S_{0}^{h})^{\prime}Q_{H}u_{\rho}^{h}}_{\Gamma}
                + \bignorm{Q_{H}\bigl((S_{0}^{h})^{\prime}-(S_{\rho}^{h})^{\prime}\bigr)Q_{H}u_{\rho}^{h}}_{\Gamma}\\
                \lesssim & \bignorm{(\theta_{\rho}^{h})^{\prime}Q_{H}u_{\rho}^{h}-Q_{H}(S_{0}^{h})^{\prime}Q_{H}u_{\rho}^{h}}_{\Gamma}+H^{3}.
            \end{aligned}
        \end{equation}
        By \cref{defSprime,defgamma}
        \begin{equation}
            \label{sharp5}
            \begin{aligned}
                &\bignorm{(\theta_{\rho}^{h})^{\prime}Q_{H}u_{\rho}^{h}-Q_{H}(S_{0}^{h})^{\prime}Q_{H}u_{\rho}^{h}}_{\Gamma}^{2}\\
                =& \bigl((\theta_{\rho}^{h})^{\prime}\bigr)^{2}\norm{Q_{H}u_{\rho}^{h}}_{\Gamma}^{2}
                +2(\theta_{\rho}^{h})^{\prime}\norm{\HE_{0}^{h}Q_{H}u_{\rho}^{h}}^{2}
                +\sup_{0\neq u^{h}\in V_{\Gamma}^{h}}\frac{\bigabs{(\HE_{0}^{h}Q_{H}u_{\rho}^{h},\HE_{0}^{h}Q_{H}u^{h})}^{2}}{\norm{u^{h}}_{\Gamma}^{2}}\\
                \leq &\Bigl(\bigl((\theta_{\rho}^{h})^{\prime}\bigr)^{2}+2(\theta_{\rho}^{h})^{\prime}H+H^{2} \Bigr)\norm{Q_{H}u_{\rho}^{h}}_{\Gamma}^{2}\leq \bigl((\theta_{\rho}^{h})^{\prime}+H\bigr)^{2}.
            \end{aligned}
        \end{equation}
        Using \cref{lemdiff1,eqnorm,continuity,lapext}, we know that
        \begin{equation}
            \label{sharp6}
            \begin{aligned}
                \bigabs{(\theta_{\rho}^{h})^{\prime}+H}
            \leq& \bigabs{\norm{\HE_{\rho}^{h}u_{\rho}^{h}}^{2}-\norm{\HE_{\rho}^{h}Q_{H}u_{\rho}^{h}}^{2}}+
            \bigabs{\norm{\HE_{\rho}^{h}Q_{H}u_{\rho}^{h}}^{2}-\norm{\HE_{0}^{h}Q_{H}u_{\rho}^{h}}^{2}}\\
            &+H\bigabs{\norm{Q_{H}u_{\rho}^{h}}_{\Gamma}^{2}-\norm{u_{\rho}^{h}}_{\Gamma}^{2}}\lesssim H^{2}.
            \end{aligned}
        \end{equation}
        Thus the lemma is proved by \cref{eqevec,sharp1,sharp1,sharp3,sharp4,sharp5,sharp6}.
    \end{proof}

    Combining \cref{sharp} with \cref{main1}, we obtain a sharper estimation for the convergence factor.
    \begin{theorem}
        \label{mainsharp}
        Suppose the coarse mesh size $H$ is small enough, if the inner product $(\cdot,\cdot)_{\Gamma}$ is defined as \cref{defgamma}, there exists a constant $C$ independent of $h$ and $H$ such that
        \begin{equation*}
            \epsilon_{N}\leq CH^{2}\bigl(1+\ln(H/h)\bigr)^{2}\epsilon^{2},
        \end{equation*}
        where $\epsilon$ and $\epsilon_{N}$ are errors of the eigenvalue before and after one iteration.
    \end{theorem}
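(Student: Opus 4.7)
The plan is to re-execute the proof of Theorem \ref{main1} (in the form used by the proof of Theorem \ref{mainthm}), but to replace the crude bound $C_2 \leq c_{\HE}^{2}/c_{g}$ on the second estimate \cref{est2} by the refined bound supplied by Lemma \ref{sharp}. By Proposition \ref{quadcon} together with Lemma \ref{lemdiff2} (which makes $\theta_{\xi}^{\prime\prime} \leq 0$ and allows $\xi$ to float in the sup), the Newton error obeys
\begin{equation*}
    \epsilon_{N} \leq \sup_{\xi\in(\lambda^{h},\rho_{0})}\frac{\theta_{\xi}^{\prime\prime}}{2\theta_{\xi}^{\prime}}\,\epsilon^{2},
\end{equation*}
so it suffices to show the sup is $\lesssim H^{2}\bigl(1+\ln(H/h)\bigr)^{2}$, with a constant independent of $h$ and $H$.

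First I would fix $\xi \in (\lambda^h,\rho_0)$, normalize $\norm{u_{\xi}^{h}}_{\Gamma}=1$, and set $v_{\xi}^{h}=\HE_{\xi}^{h}u_{\xi}^{h}$. Starting from identity \cref{diff2} and using that $\theta_{\xi}^{h}$ is the smallest eigenvalue of $S_{\xi}^{h}$ in $V_{\Gamma}^{h}$, one gets
\begin{equation*}
    \bigabs{\theta_{\xi}^{\prime\prime}} \leq -\dual{(S_{\xi}^{h})^{\prime\prime}u_{\xi}^{h},u_{\xi}^{h}} + 2\bigdual{(S_{\xi}^{h}-\theta_{\xi}^{h}I^{h})(u_{\xi}^{h})^{\prime},(u_{\xi}^{h})^{\prime}}.
\end{equation*}
The first term is handled exactly as in the proof of Theorem \ref{main1}: the estimate \cref{est1} gives it a bound of $\frac{2}{\alpha-\rho_{0}}\norm{v_{\xi}^{h}}^{2}$, and by Lemma \ref{lapcoer} we have $\alpha \gtrsim H^{-2}$ while $\rho_{0}\leq\widehat{\lambda}^{h}$ is bounded (for $H$ small enough), so this contribution is $\lesssim H^{2}\norm{v_{\xi}^{h}}^{2}$. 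The second term is precisely what Lemma \ref{sharp} controls, and it is bounded by $H^{2}\bigl(1+\ln(H/h)\bigr)^{2}\norm{v_{\xi}^{h}}^{2}$, which dominates the first contribution.

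On the denominator side, Lemma \ref{lemdiff1} together with the normalization $\norm{u_{\xi}^{h}}_{\Gamma}=1$ gives $-2\theta_{\xi}^{\prime} = 2\norm{v_{\xi}^{h}}^{2}$. Dividing the numerator bound by this common factor $\norm{v_{\xi}^{h}}^{2}$ cancels it cleanly and yields
\begin{equation*}
    \frac{\theta_{\xi}^{\prime\prime}}{2\theta_{\xi}^{\prime}} \lesssim H^{2}\bigl(1+\ln(H/h)\bigr)^{2},
\end{equation*}
uniformly in $\xi$, which is the desired statement. The main obstacle was already overcome upstream: Lemma \ref{sharp}, proved via the BDDC/substructuring preconditioner $M^{h}$ from Proposition \ref{ddm} and the special inner product \cref{defgamma}, is what turns the pessimistic factor $c_{\HE}^{2}/c_{g}\approx 1$ produced by Theorem \ref{main1} into the desired $H^{2}\bigl(1+\ln(H/h)\bigr)^{2}$; once this ingredient is in hand, the assembly above is only bookkeeping.
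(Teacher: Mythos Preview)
Your proposal is correct and takes essentially the same approach as the paper: the paper's own proof is the single line ``Combining \cref{sharp} with \cref{main1},'' and you have faithfully unpacked what that combination means---re-running the two-term split \cref{est1}/\cref{est2} from the proof of \cref{main1}, bounding the first by $2/(\alpha-\rho_{0})\lesssim H^{2}$ via \cref{lapcoer}, and replacing the crude $c_{\HE}^{2}/c_{g}$ bound on the second by the $H^{2}(1+\ln(H/h))^{2}$ estimate of \cref{sharp}.
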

    \begin{remark}
        In this part, we only prove that when $(\cdot,\cdot)_{\Gamma}$ is defined as \cref{defgamma}, the rate of convergence is $\epsilon_{N}\leq CH^{2}(1+\ln(H/h))^{2}\epsilon^{2}$. For other inner products, whose corresponding norms are spectral equivalent to $\norm{\cdot}_{L^{2}(\Gamma)}$, whether similar results can be obtained is still unknown.  We do not know similar results before, and the discussions along this direction is quite interesting.
    \end{remark}

    \section{Numerical experiments}
    In this section, we present some numerical results to support our theoretical analysis above. We  compute some second order symmetric elliptic eigenvalue problems in 2D and 3D by Algorithm~\ref{algo}. Assume $\MA$ and $\MM$ are the stiffness matrix and mass matrix generated by the finite element method as \cref{femsec} respectively. Due to the non-overlapping domain decomposition method, $\MA$ and $\MM$ can be partitioned as
    \begin{equation*}
        \MA=\begin{bmatrix}
            \MA_{II}&\MA_{IB}\\
            \MA_{BI}&\MA_{BB}
        \end{bmatrix}\quad\text{and}\quad
        \MM=\begin{bmatrix}
            \MM_{II}&\MM_{IB}\\
            \MM_{BI}&\MM_{BB}
        \end{bmatrix},
    \end{equation*}
    where $\MA_{IB}=(\MA_{BI})^{\Ttran}$ and $\MM_{IB}=(\MM_{BI})^{\Ttran}$,  indices $I$ are associated with the nodes in  $\cup_{i=1}^{N}\Omega_{i}$ while $B$ are associated with the nodes on $\Gamma$. Let
    \begin{equation}
        \label{mSchur}
        \MS_{\rho}\defi(\MA_{BB}-\rho\, \MM_{BB})-(\MA_{BI}-\rho\,\MM_{BI})\,(\MA_{II}-\rho\,\MM_{II})^{-1}(\MA_{IB}-\rho\,\MM_{IB}),
    \end{equation}
    the eigenvalue problem $\MA \Mv=\lambda^{h}\,\MM\Mv$ can be rewritten as $\MS_{\lambda^{h}}\Mu=0$, where $\Mu$ is the restriction of $\Mv$ on $\Gamma$. Suppose $\MM_{\Gamma}=h^{d-1}\,\MI_{\Gamma}$ is the mass matrix on $\Gamma$, where $\MI$ is identity matrix on $\Gamma$, the nonlinear eigenvalue problem can be written as
    \begin{equation}
        \label{mschureig}
        \MS_{\rho}\Mu_{\rho}=\theta_{\rho}^{h}\,\MM_{\Gamma}\Mu_{\rho}.
    \end{equation}
    By defining the extension operator from $\Gamma$ to $\Omega$ as
    \begin{equation*}
        \MH_{\rho}=\begin{bmatrix}
            -(\MA_{II}-\rho\,\MM_{II})^{-1}(\MA_{IB}-\rho\,\MM_{IB})\\
            \MI
        \end{bmatrix},
    \end{equation*}
    we get the matrix version of Algorithm~\ref{algo}. Our numerical experiments were performed in Matlab 2020b, the real solution $\lambda^{h}$ is calculated from solving the smallest eigenvalue of $(\MA,\MM)$ by the MATLAB function ``\textbf{eigs}'' with tolerance $10^{-15}$, the stopping criterion is chosen as $\epsilon<10^{-12}$ and the subproblem \cref{mschureig} is solved by the Matlab function ``\textbf{eigs}'' with tolerance $10^{-12}$. Let $\rho_{k}$ and $\eta_{k}$ be the approximated eigenvalue and convergence factor after $k$ steps of Newton's method respectively, \ie
    \begin{equation*}
        \epsilon_{k}=\rho_{k}-\lambda^{h}\quad\text{and}\quad\eta_{k}=\frac{\epsilon_{k+1}}{\epsilon_{k}^{2}}.
    \end{equation*}
    In our experiments, Algorithm~\ref{algo} converges after few steps, so we only consider $\eta_{0}$.
    \begin{figure}
        \subfloat[3D case]{
            \label{figdomain3}
            \includegraphics[width=\figsizeD]{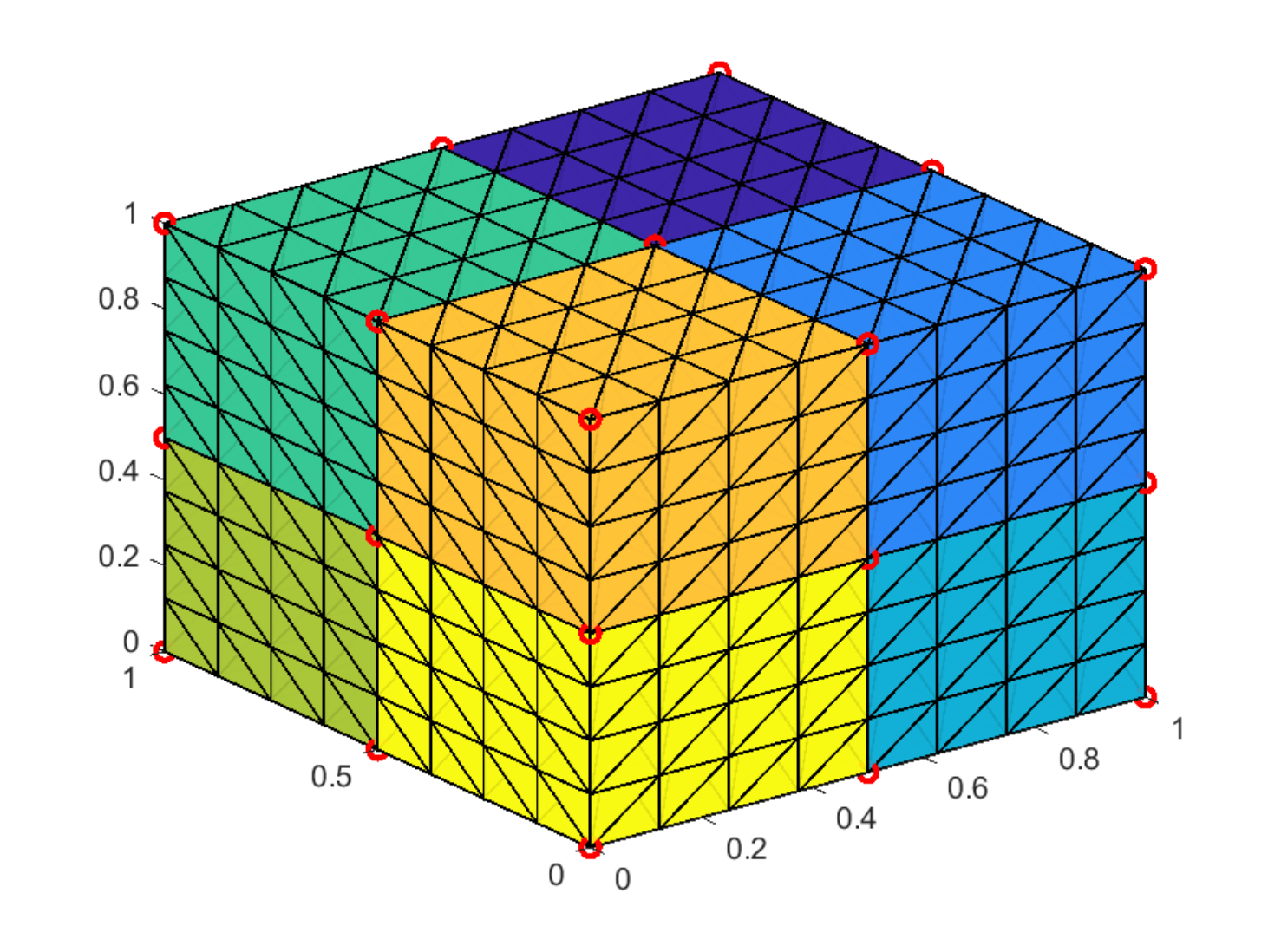}
        }
        \subfloat[2D L-shaped case]{
            \label{figdomain2}
            \includegraphics[width=\figsizeD]{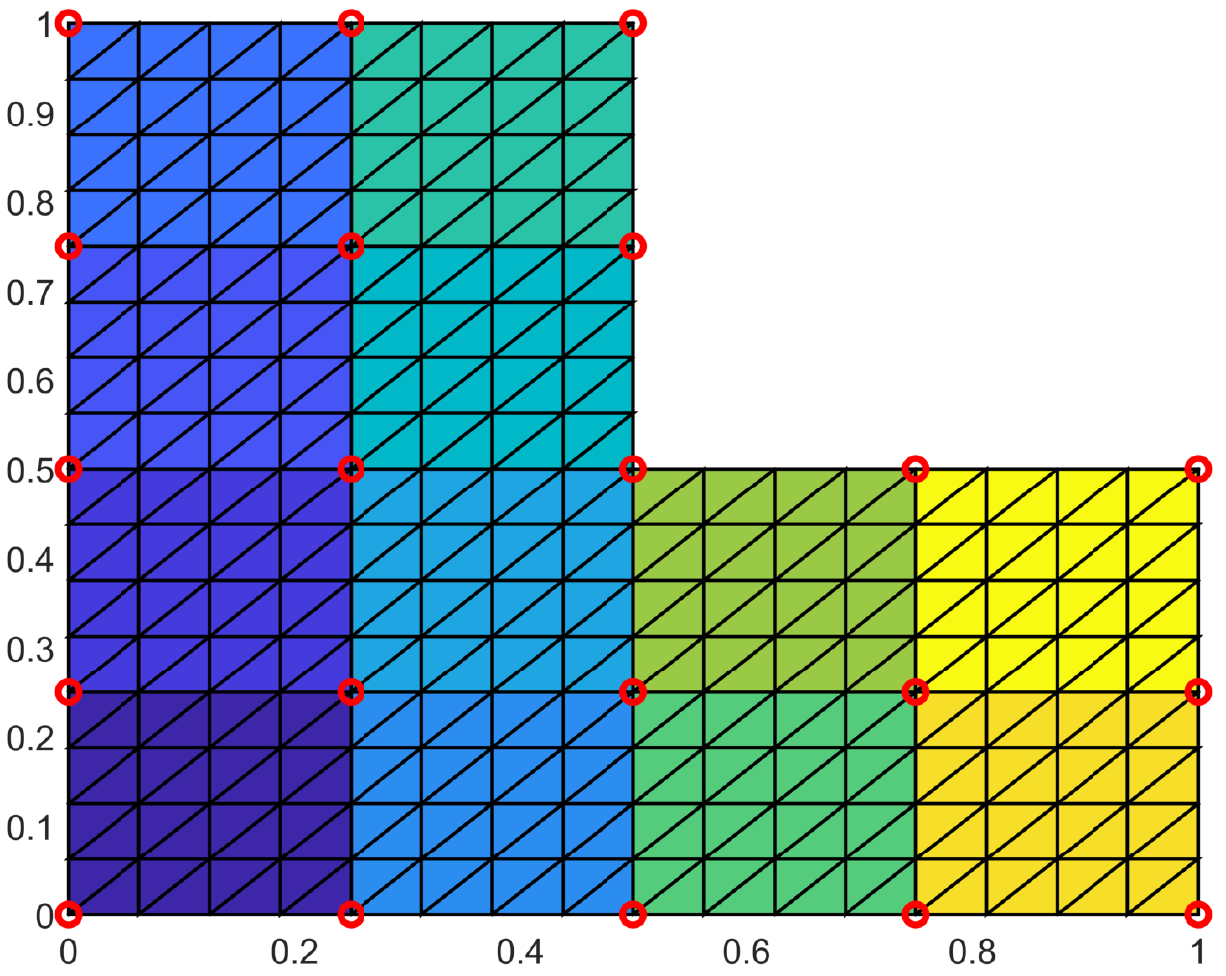}
        }
        \caption{A partition for non-overlapping domain decomposition. Red points are nodes for coarse mesh. Meshes in $\mathcal{T}^{h}$ with same color belong to a same subdomain.}
    \end{figure}
    \subsection{The Laplacian eigenvalue problem in 3D}
    In this subsection, the domain $\Omega$ is the unit cube $[0,1]^{3}$ in 3D, and a partition is shown in \cref{figdomain3}. We consider the relationship between the convergence factor $\eta$ and the fine mesh size $h$ or the coarse mesh size $H$ separately. For the relationship with the fine mesh size, the coarse mesh size is fixed as $H=2^{-1}$ and fine mesh sizes $h$ are chosen as $2^{-j}$ for $j=2,\dotsc,5$. For the relationship with the coarse mesh size, the fine mesh size is fixed as $h=2^{-5}$ and the coarse mesh sizes $H$ are chosen as $2^{-j}$ for $j=1,\dotsc,4$. \Cref{tabhist3D,fighist3D} show that the Newton-Schur method converges quadratically. We can see that the convergence factor $\eta_{0}$ decreases in $\mathcal{O}(H^{2})$ from \cref{figfac3D}, which means the logarithmic factor may be removed.

    \begin{table}[tbhp]
        \centering
        \subfloat[Various fine mesh sizes]{
        \scalebox{0.9}{
            \begin{tabular}{|c|c|c|c|c|c|c|}
                \hline
                $h$ & $\epsilon_{0}$ & $\epsilon_{1}$ & $\epsilon_{2}$ & $\epsilon_{3}$ & $\epsilon_{4}$ & $\epsilon_{5}$ \\ \hline
                $2^{-2}$ & 0.6000 & 0.0220 & 2.5235e-05 & 3.2885e-11 & 3.7896e-16 & $\surd$ \\ \hline
                $2^{-3}$ & 0.9031 & 0.1442 & 0.0030 & 1.2326e-06 & 2.1061e-13 & $\surd$ \\ \hline
                $2^{-4}$ & 0.9943 & 0.2411 & 0.0119 & 2.7295e-05 & 1.4354e-10 & 4.3693e-15\\ \hline
                $2^{-5}$ & 1.0183 & 0.2799 & 0.0183 & 7.4511e-05 & 1.2260e-09 & 2.2229e-14\\ \hline
            \end{tabular}
        }}

        \subfloat[Various coarse mesh sizes]{
        \scalebox{0.9}{
            \begin{tabular}{|c|c|c|c|c|c|c|}
                \hline
                $H$ & $\epsilon_{0}$ & $\epsilon_{1}$ & $\epsilon_{2}$ & $\epsilon_{3}$ & $\epsilon_{4}$ & $\epsilon_{5}$ \\ \hline
                $2^{-1}$ & 1.0183 & 0.2799 & 0.0183 & 7.4511e-05 & 1.2260e-09 & 2.2229e-14\\ \hline
                $2^{-2}$ & 0.2614 & 0.0029 & 3.5313e-07 & 2.7726e-14 & $\surd$ & $\surd$   \\ \hline
                $2^{-3}$ & 0.0605 & 2.7729e-05 & 5.8386e-12 & 2.2946e-14 & $\surd$ & $\surd$ \\ \hline
                $2^{-4}$ & 0.0120 & 1.0350e-07 & 2.1750e-14 & $\surd$  & $\surd$ & $\surd$ \\ \hline
            \end{tabular}
        }}
        \caption{Relative errors for approximated eigenvalues by Algorithm~\ref{algo} for the 3D Laplacian eigenvalue problem. The ``$\surd$'' entry means that the algorithm converged.}
        \label{tabhist3D}
    \end{table}

    \begin{figure}[tbhp]
        \centering
        \subfloat[Various fine mesh sizes]{
            \includegraphics[width=\figsizeD]{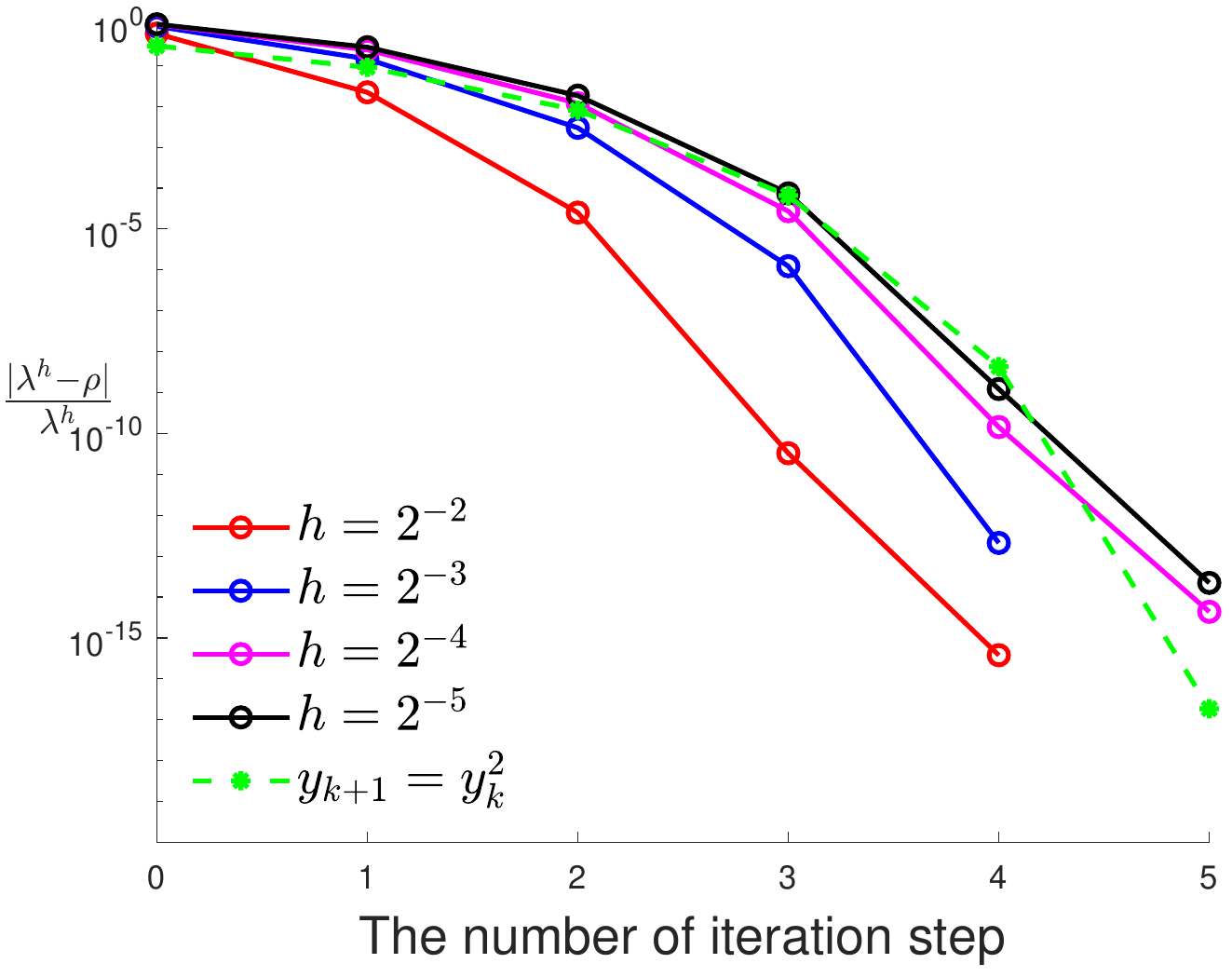}
        }
        \subfloat[Various coarse mesh sizes]{
            \includegraphics[width=\figsizeD]{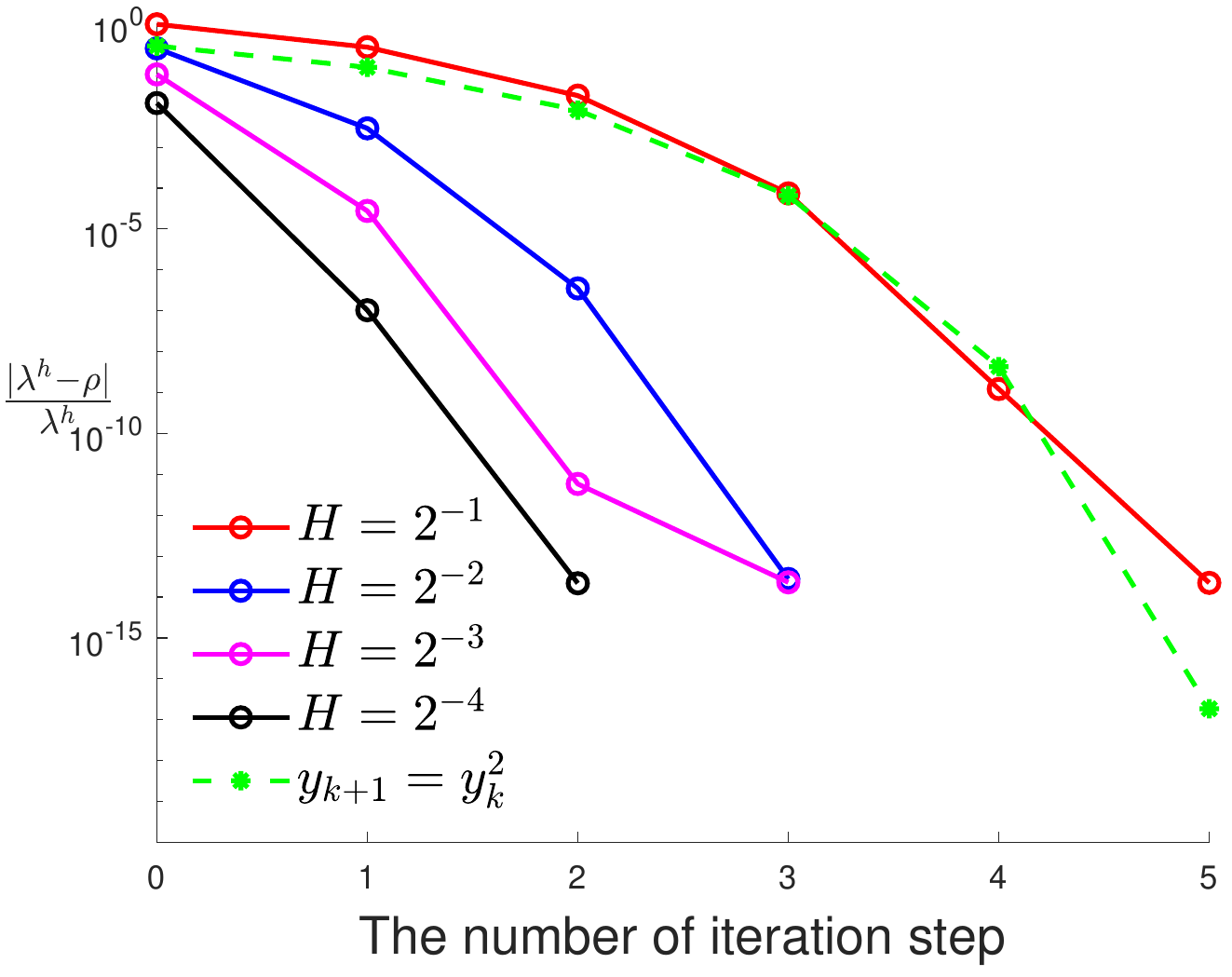}
        }
        \caption{Convergence history of the 3D Laplacian eigenvalue problem. The green dashed lines are references lines with $y_{0}=0.3$.}
        \label{fighist3D}
    \end{figure}
    \begin{figure}[tbhp]
        \centering
        \subfloat[Various fine mesh sizes]{
            \includegraphics[width=\figsizeD]{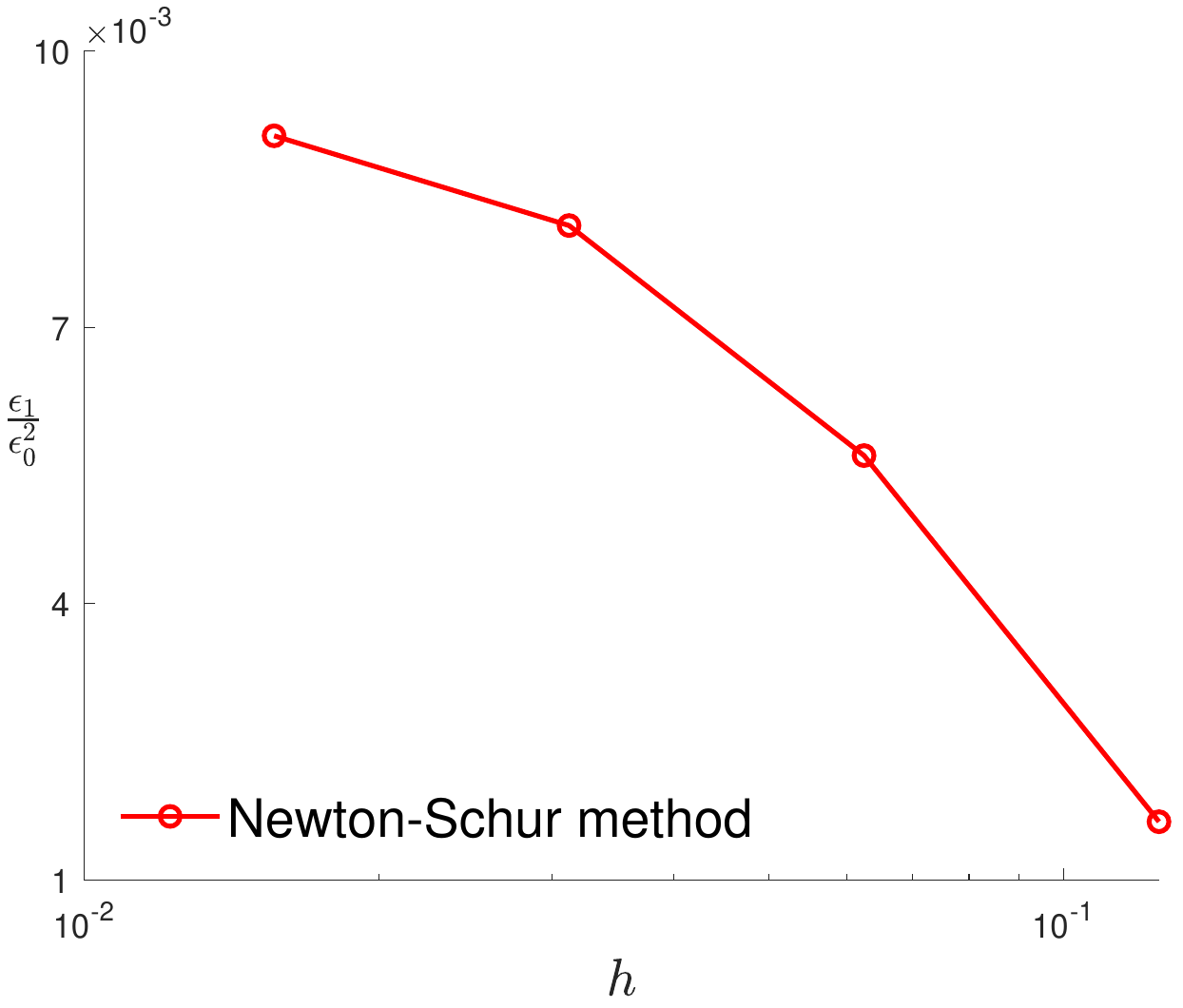}
        }
        \subfloat[Various coarse mesh sizes]{
            \includegraphics[width=\figsizeD]{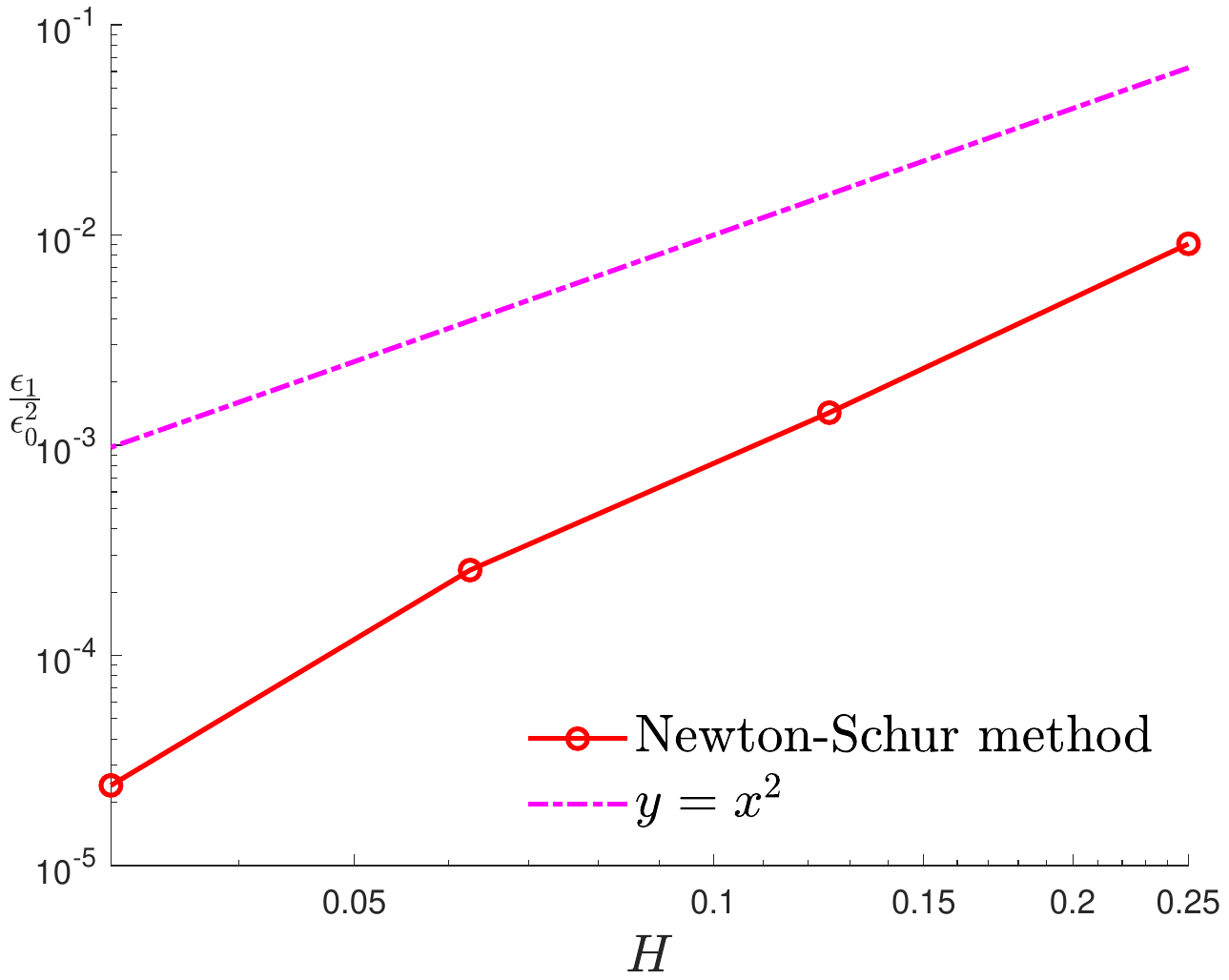}
        }
        \caption{Convergence factor for the 3D Laplacian eigenvalue problem.}
        \label{figfac3D}
    \end{figure}
    \subsection{The Laplacian eigenvalue problem on a 2D L-shaped domain}
    In this subsection, the domain $\Omega$ is an L-shaped domain in 2D, which is shown in \cref{figdomain2}. Similar to the previous subsection, we also consider the relationship between the convergence factor $\eta$ and the fine mesh size $h$ or the coarse mesh size $H$ separately. For the relationship with the fine mesh size, the coarse mesh size is fixed as $H=2^{-2}$ and fine mesh sizes $h$ are chosen as $2^{-j}$ for $j=3,\dotsc,9$. For the relationship with the coarse mesh size, the fine mesh size is fixed as $h=2^{-8}$ and the coarse mesh sizes $H$ are chosen as $2^{-j}$ for $j=2,\dotsc,6$. \Cref{tabhist2D,fighist2D} show that the Newton-Schur method converges quadratically. We can see that the convergence factor $\eta_{0}$ decreases in $\mathcal{O}(H^{2})$ from \cref{figfac2D}, which is similar to the 3D case, even though the domain $\Omega$ is no longer convex.

    \begin{table}[tbhp]
        \centering
        \subfloat[Various fine mesh sizes]{
        \begin{tabular}{|c|c|c|c|c|}
            \hline
            $h$ & $\epsilon_{0}$ & $\epsilon_{1}$ & $\epsilon_{2}$ & $\epsilon_{3}$ \\ \hline
            $2^{-3}$ & 0.0811 & 5.5397e-04 & 2.5657e-08 & 1.7824e-16  \\ \hline
            $2^{-4}$ & 0.1061 & 0.0011 & 1.2506e-07 & 1.4172e-16  \\ \hline
            $2^{-5}$ & 0.1139 & 0.0014 & 2.0185e-07 & 2.0201e-15 \\ \hline
            $2^{-6}$ & 0.1164 & 0.0015 & 2.4069e-07 & 1.4172e-14 \\ \hline
            $2^{-7}$ & 0.1172 & 0.0015 & 2.5877e-07 & 6.8337e-14 \\ \hline
            $2^{-8}$ & 0.1175 & 0.0015 & 2.6718e-07 & 2.8761e-13 \\ \hline
            $2^{-9}$ & 0.1176 & 0.0016 & 2.7114e-07 & 1.1677e-12 \\ \hline
        \end{tabular}}

        \subfloat[Various coarse mesh sizes]{
        \begin{tabular}{|c|c|c|c|c|}
            \hline
            $H$ & $\epsilon_{0}$ & $\epsilon_{1}$ & $\epsilon_{2}$ & $\epsilon_{3}$ \\ \hline
            $2^{-2}$ & 0.1175 & 0.0015 & 2.6718e-07 & 2.8761e-13\\ \hline
            $2^{-3}$ & 0.0337 & 2.8695e-05 & 2.0531e-11 & 2.9258e-13  \\ \hline
            $2^{-4}$ & 0.0103 & 6.5277e-07 & 2.9424e-13 & $\surd$ \\ \hline
            $2^{-5}$ & 0.0033 & 1.5976e-08 & 3.0769e-13 & $\surd$ \\ \hline
            $2^{-6}$ & 0.0010 & 3.6742e-10 & 2.9516e-13 & $\surd$\\ \hline
        \end{tabular}}
        \caption{Relative errors for approximated eigenvalues by Algorithm~\ref{algo} for the 2D Laplacian eigenvalue problem on an L-shaped domain. The ``$\surd$'' entry means that the algorithm converged.}
        \label{tabhist2D}
    \end{table}
    \begin{figure}[tbhp]
        \centering
        \subfloat[Various fine mesh sizes]{
            \includegraphics[width=\figsizeD]{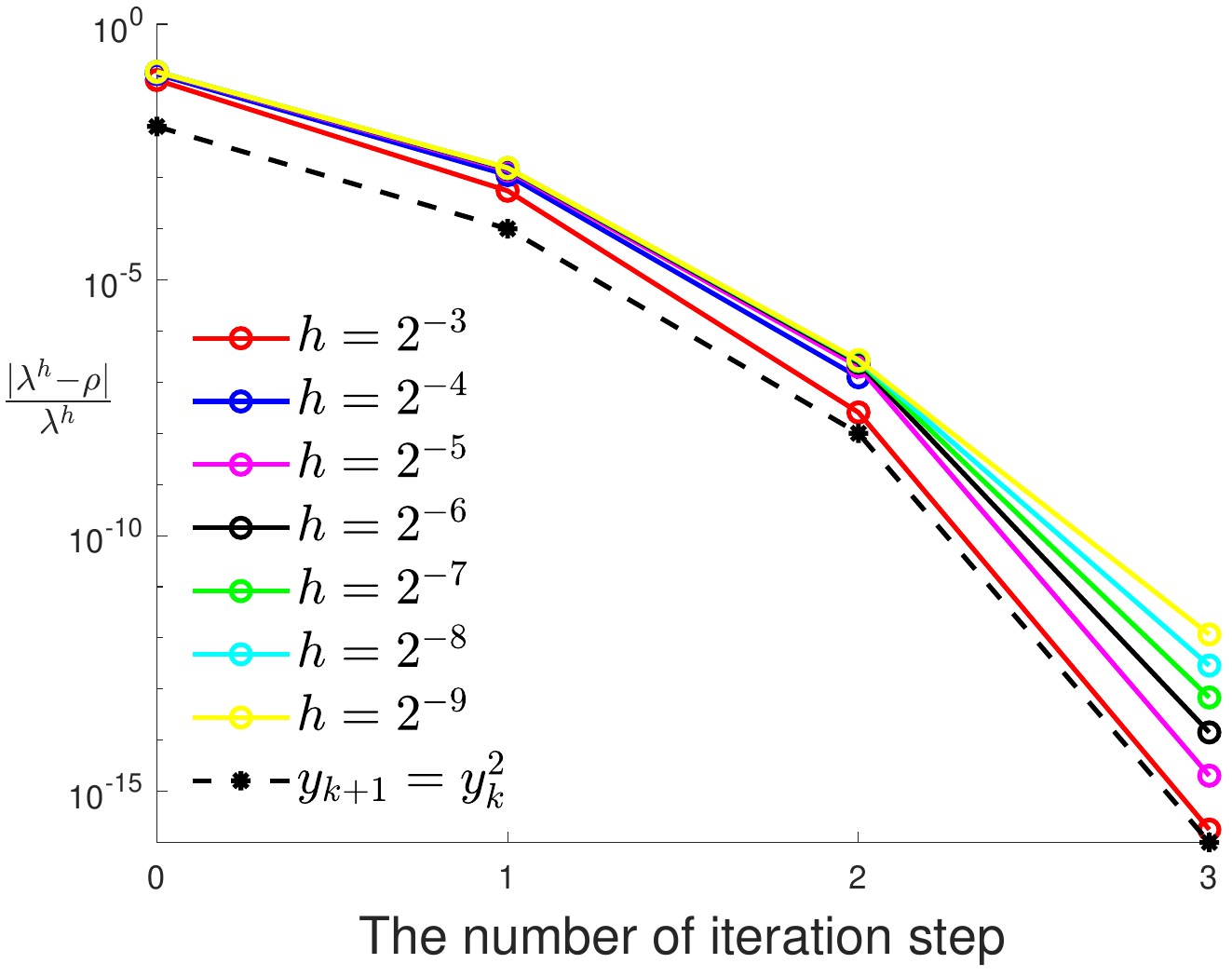}
        }
        \subfloat[Various coarse mesh sizes]{
            \includegraphics[width=\figsizeD]{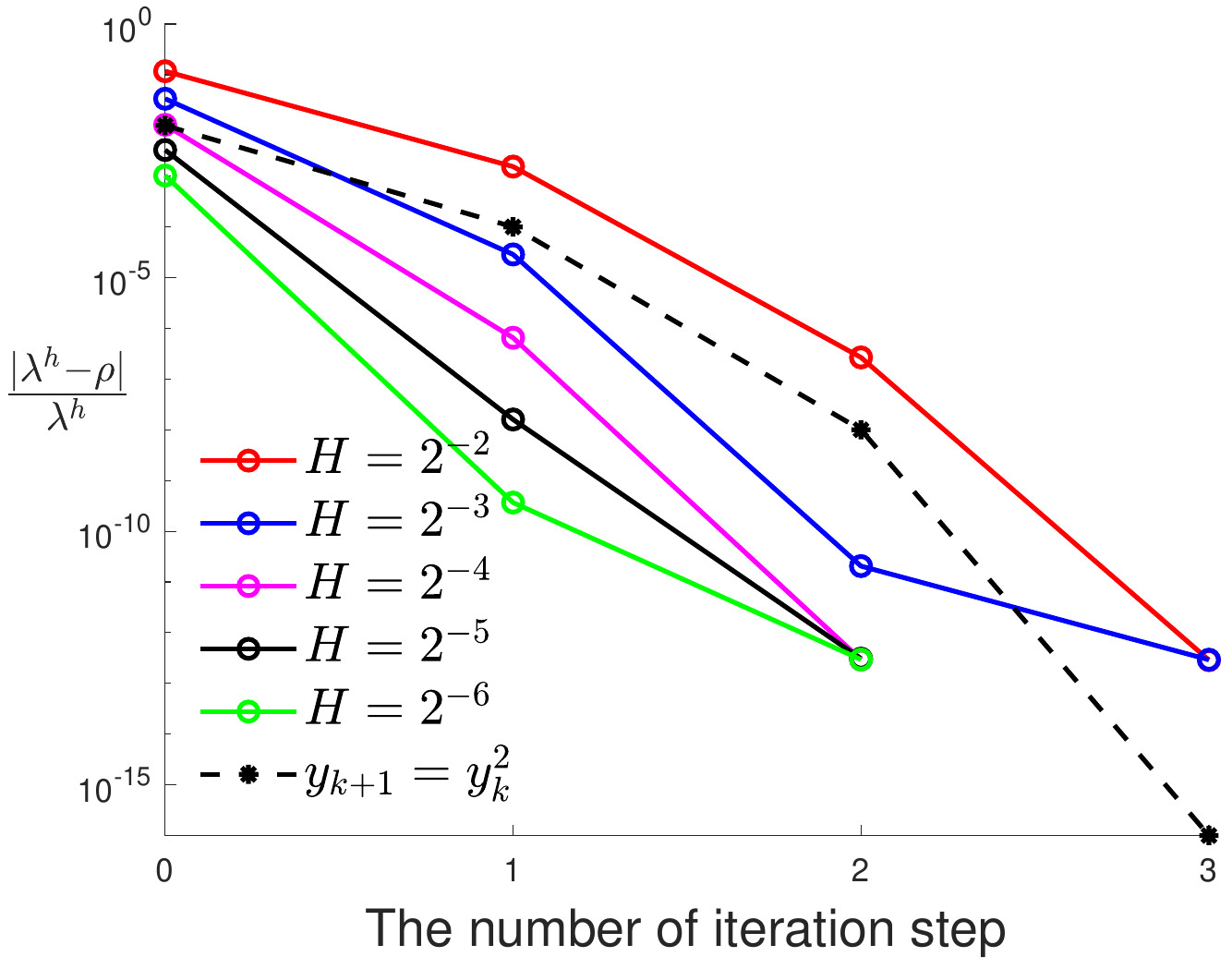}
        }
        \caption{Convergence history of the 2D Laplacian eigenvalue problem on an L-shaped domain. The black dashed lines are references lines with $y_{0}=0.01$.}
        \label{fighist2D}
    \end{figure}
    \begin{figure}[tbhp]
        \centering
        \subfloat[Various fine mesh sizes]{
            \includegraphics[width=\figsizeD]{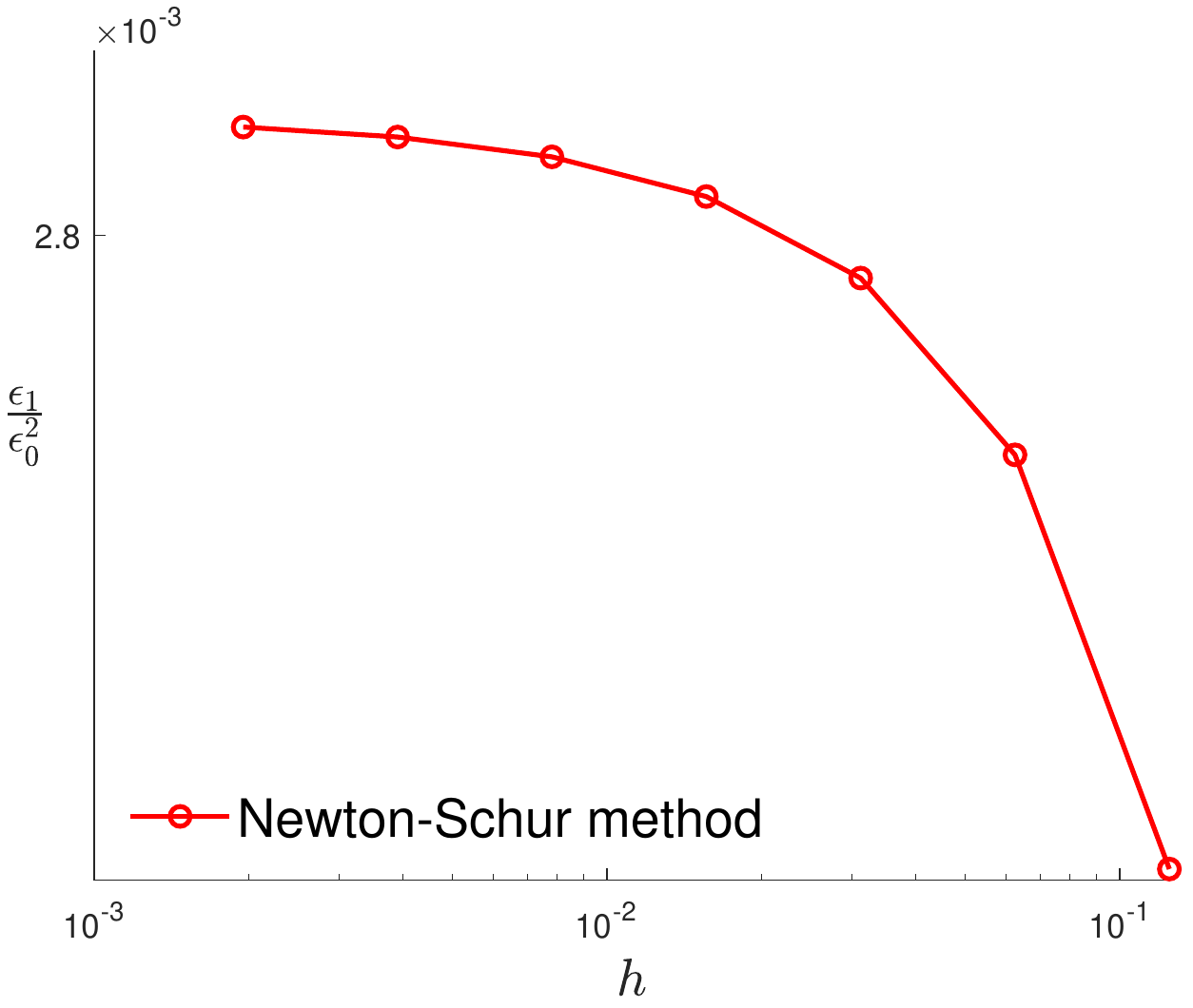}
        }
        \subfloat[Various coarse mesh sizes]{
            \includegraphics[width=\figsizeD]{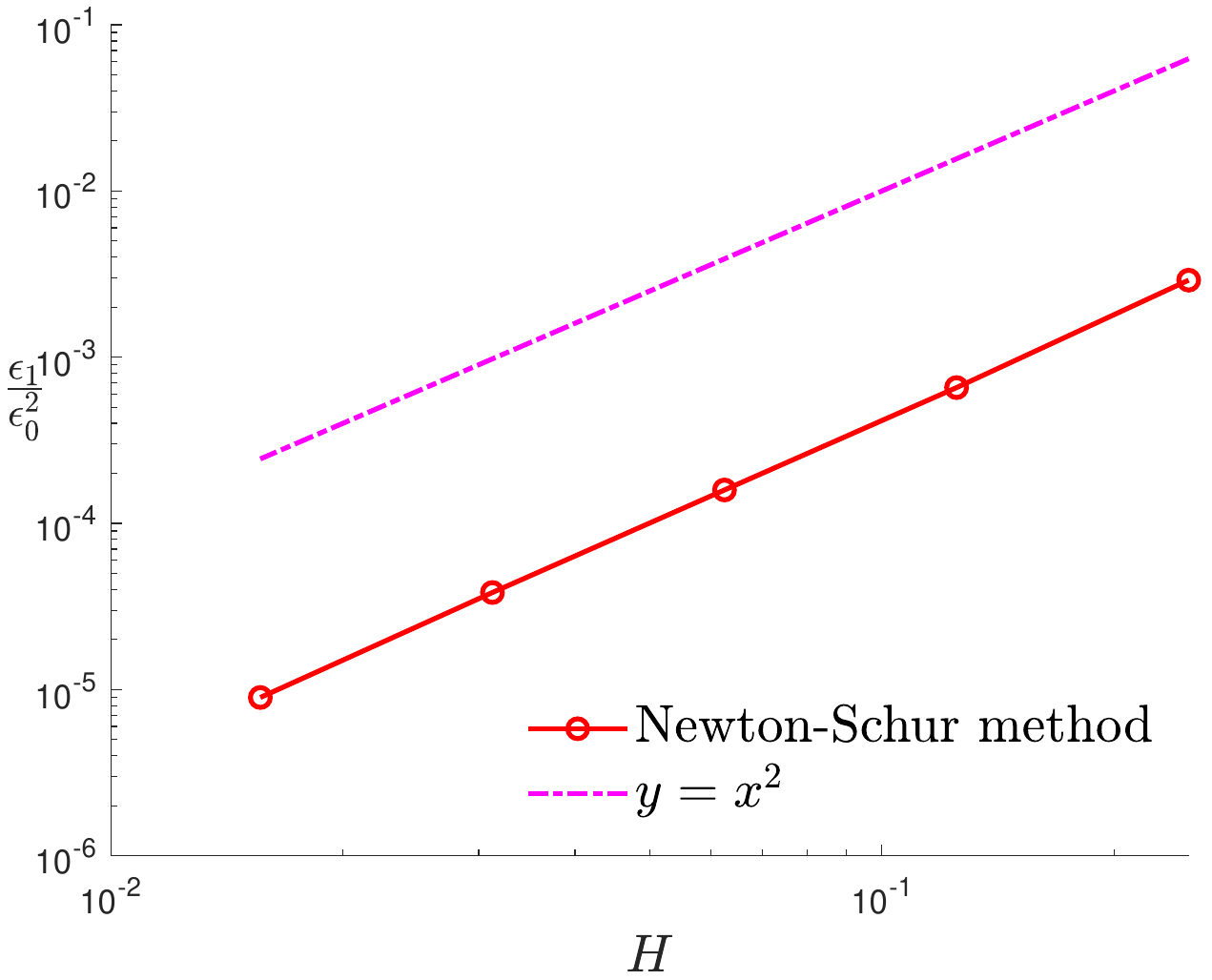}
        }
        \caption{Convergence factor for the 2D Laplacian eigenvalue problem on an L-shaped domain.}
        \label{figfac2D}
    \end{figure}

    \section{Conclusions}
    In this paper, we study the Newton-Schur method in Hilbert space and obtain some sufficient conditions for quadratic convergence. Moreover, we analyze the Newton-Schur method for symmetric elliptic eigenvalue problems discretized by the standard finite element method and non-overlapping domain decomposition method. Theoretical analysis shows that the rate of convergence is $\epsilon_{N}\leq CH^{2}(1+\ln(H/h))^{2}\epsilon^{2}$, which is supported by our numerical results.
    
    \section*{Acknowledgments}
    The authors acknowledge helpful conversations with  Zhaojun Bai in UC Davis and  Xuejun Xu in Tongji University. Chen thanks the Super-computing Center of USTC for providing computing resource and Jinwu Zhuo in Matlab. They also thank the anonymous referees for their valuable suggestions, which significantly improved the presentation of the paper.

    \bibliographystyle{siamplain}
    \bibliography{ref}

    \end{document}